\definecolor{labelkey}{rgb}{0.6,0,1}
\newcounter{corr}
\definecolor{violet}{rgb}{0.580,0.,0.827}
\newcommand{\corr}[3]{\typeout{Warning : a correction remains in page
\thepage}
				\stepcounter{corr}        
				{\color{blue}\ifmmode\text{\,\sout{\ensuremath{#1}}\,}\else\sout{#1}\fi}
       {\color{red}#2}
       {\color{violet} #3}}
\numberwithin{equation}{section}
 \newtheorem{thm}{Theorem}[section]
 \newtheorem{lem}[thm]{Lemma}
 \newtheorem{exam}[thm]{Example}
 \newtheorem{prop}[thm]{Proposition}
 \newtheorem{cor}[thm]{Corollary}
 \newtheorem{rem}[thm]{Remark}
 \newtheorem{defn}[thm]{Definition}
 \def\Id{\mathop{\rm Id}\nolimits}
 \def\di{\mathop{\rm div}\nolimits}
 \def\re{\mathop{\rm Re}\nolimits}
 \def\ran{\mathop{\rm Ran}\nolimits}
 \def\tr{\mathop{\rm tr}\nolimits}
 \def\dom{D}
 \newcommand{\Hm }{{H_{\mbox{\small{-}}}}}
 \newcommand{\Hp }{{H_{\mbox{\tiny{+}}}}}
 \newcommand{\Gm  }{G_{\mbox{\small{-}}}}
 \newcommand{\Gp  }{G_{+}}
 \newcommand{\xp }{x_+}
 \newcommand{\xm }{x_{\mbox{\small{-}}}}
 \newcommand{\hp }{h_+}
 \newcommand{\hm }{h_{\mbox{\small{-}}}}
\newcommand{\C}{{\mathbb C}}   
 \newcommand{\R}{{\mathbb R}}           
\newcommand{\K}{{\mathbb K}}        
\newcommand{\D}{{\mathbb D}}
\author{W. Arendt}
\address{Wolfgang Arendt, Institute of Applied Analysis, University of Ulm. Helmholtzstr. 18, D-89069 Ulm (Germany)} 
\email{wolfgang.arendt@uni-ulm.de}
\author{I. Chalendar}
\address{Isabelle Chalendar,  Universit\'e Gustave Eiffel, LAMA, (UMR 8050), UPEM, UPEC, CNRS, F-77454, Marne-la-Vallée (France)}
\email{isabelle.chalendar@univ-eiffel.fr}
\author{R. Eymard}
\address{Robert Eymard,  Universit\'e Gustave Eiffel, LAMA, (UMR 8050), UPEM, UPEC, CNRS, F-77454, Marne-la-Vallée (France)}
\email{robert.eymard@univ-eiffel.fr}
\title[Derivations and symmetric operators]{Extensions of derivations and symmetric operators}
\keywords{ }
\subjclass[2010]{65N30,47A07,47A52,46B20}
\begin{document}	

\begin{abstract}   
Given a densely defined skew-symmetric operators $A_0$ on a real or complex Hilbert space $V$, we parametrize all $m$-dissi\-pa\-tive extensions in terms of contractions $\Phi:\Hm \to \Hp  $, where 
$\Hm $ and $\Hp  $ are Hilbert spaces associated with a \emph{boundary quadruple}. Such an extension generates a unitary $C_0$-group if and only if $\Phi$ is a unitary operator. As corollary we obtain the parametrization
 of all selfadjoint  extensions of a symmetric operator by unitary operators from $\Hm $ to $\Hp  $. Our results extend the theory of boundary triples initiated by von Neumann and developed by V. I.  and M. L. Gorbachuk, J. Behrndt and M. Langer, S. A. Wegner and many others, in the sense that a boundary quadruple always exists (even if the defect indices are different in the symmetric case).   
 \end{abstract}	
\maketitle
\tableofcontents   
\section{Introduction}
A classical  subject in Functional Analysis and in Mathematical Phy\-sics is the description of all selfadjoint extensions of a symmetric operator.  It was J. von  
    Neumann who gave the first result in this direction (see Section~\ref{sec:5}). Generalizations of von Neumann's Theorem led to the theory of boundary triples which are described in detail in the monographs \cite{GG} by V.I. and M.L. Gorbachuk and by K. Schm\"udgen \cite{Sch12}. Such boundary triples exist whenever the given symmetric operator has at least one selfadjoint extension.  Multiplying by the complex number $i$, the problem can be reformulated as follows. Let $A_0$ be a densely defined skew-symmetric operator on a  Hilbert space $V$. Describe all  extensions $B$ of $A_0$ which generate a unitary $C_0$-group. 
    
   More recently,  Wegner \cite{Weg} used boundary triples to parametrize also all extensions $B$ of $A_0$ which generate a contractive $C_0$-semigroup (which does not necessarily consist of unitary operators).    However, it turns out  that for this task the notion of boundary triples is too narrow and does not cover all cases. 
   
   In the present article we introduce \emph{boundary quadruples}, a notion with weaker assumptions, which covers all cases. Moreover, we give quite short proofs which might make the new setting attractive. 
   
   Let us describe in more details some of the main results. Let $V$ be a Hilbert space over $\K=\R$ or $\C$, and let $A_0$ be a skew-symmetric operator with dense domain. Then $A_0$ is a restriction of $A:=-A_0^*$. 
   
   A \emph{boundary quadruple} $(\Hm,\Hp, \Gm  ,\Gp  )$ consists of pre-Hilbert spaces $\Hm,\Hp$ and surjective linear mappings $\Gm:\dom(A)\to \Hm$ and $\Gp:\dom(A)\to \Hp$ such that 
   \begin{equation}\label{eq:1.1}
   	\langle Au,v\rangle_V +\langle u,Av\rangle_V =\langle \Gp  u,\Gp  v\rangle_{\Hp} -\langle \Gm  u,\Gm  v\rangle_{\Hm}
   \end{equation}      
    for all $u,v\in D(A)$, with the additional condition
    \begin{equation}\label{eq:1.2}
    \ker \Gm   +  \ker \Gp  =D(A). 
    \end{equation}
    It is remarkable that the purely algebraic assumptions \eqref{eq:1.1} and \eqref{eq:1.2} imply that the mappings $\Gm  $ and $\Gp  $ are continuous and that the images $\Hm,\Hp$ are actually Hilbert spaces.  Our main result is the following. 
    \begin{thm}
    	Let $B$ be an extension of $A_0$. The following are equivalent:
    	\begin{itemize}
    	\item[(i)] $B$ generates a $C_0$-semigroup of contractions;
    	\item[(ii)] there exists a linear contraction $\Phi:\Hm   \to \Hp  $ such that 
    	\[  D(B)=\{ w\in D(A):\Phi \Gm  w=\Gp  w\}\mbox{ and }Bw=Aw,\,\, w\in D(B).\]
    	\end{itemize}	
    \end{thm}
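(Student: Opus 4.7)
The plan is to treat the two implications separately, using the identity \eqref{eq:1.1} to translate dissipativity into a norm inequality on boundary values and then invoking the Lumer--Phillips theorem to obtain generation.

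For (ii) $\Rightarrow$ (i), let $B$ be the restriction of $A$ to $D(B):=\{w\in D(A):\Phi \Gm w=\Gp w\}$. Dissipativity is immediate from \eqref{eq:1.1}:
\[
2\re\langle Bu,u\rangle_V=\|\Gp u\|_{\Hp}^2-\|\Gm u\|_{\Hm}^2=\|\Phi \Gm u\|_{\Hp}^2-\|\Gm u\|_{\Hm}^2\le 0,\quad u\in D(B).
\]
Closedness follows from continuity of $\Gm,\Gp$ on $D(A)$ with the graph norm (remarked in the excerpt), and density of $D(B)$ follows from $D(A_0)\subset \ker\Gm\cap\ker\Gp\subset D(B)$, the first inclusion being a short consequence of \eqref{eq:1.1}, the skew-symmetry of $A_0$, the surjectivity of $\Gm,\Gp$, and \eqref{eq:1.2}. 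The heart of the proof is density of $\ran(I-B)$. If $f\perp\ran(I-B)$, testing against $u\in D(A_0)\subset D(B)$ forces $f\in D(A_0^*)=D(A)$ with $Af=-f$; substituting this back into $\langle u-Au,f\rangle=0$ and invoking \eqref{eq:1.1} collapses the relation to
\[
\langle \Gm u,\,\Phi^*\Gp f-\Gm f\rangle_{\Hm}=0,\qquad u\in D(B).
\]
A short argument shows $\Gm(D(B))=\Hm$: given $h\in\Hm$, lift $h=\Gm w$ via surjectivity of $\Gm$, then use \eqref{eq:1.2} and surjectivity of $\Gp$ to produce $v\in\ker\Gm$ with $\Gp(w+v)=\Phi h$, so that $w+v\in D(B)$. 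Consequently $\Gm f=\Phi^*\Gp f$, and hence $\|\Gm f\|_{\Hm}\le\|\Gp f\|_{\Hp}$ since $\Phi^*$ is contractive. But \eqref{eq:1.1} applied with $u=v=f$ and $Af=-f$ gives $\|\Gm f\|_{\Hm}^2-\|\Gp f\|_{\Hp}^2=2\|f\|_V^2$, forcing $f=0$. Lumer--Phillips now delivers (i).

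For (i) $\Rightarrow$ (ii) we first show $B\subset A$. Since $A_0\subset B$ and $A_0$ is skew-symmetric, $\re\langle Bw,w\rangle=0$ for $w\in D(A_0)$. Dissipativity applied to $v+\lambda w$ for $v\in D(B)$, $w\in D(A_0)$, with $\lambda=te^{i\theta}$, $t>0$, yields after dividing by $t$ and sending $t\to\infty$ that $\re(e^{-i\theta}\langle Bv,w\rangle+e^{i\theta}\langle Bw,v\rangle)\le 0$ for every $\theta$; replacing $\theta$ by $\theta+\pi$ reverses the sign, so equality holds, and letting $\theta$ range one obtains $\langle A_0w,v\rangle+\langle w,Bv\rangle=0$ for all $w\in D(A_0)$. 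Thus $v\in D(A_0^*)=D(A)$ with $Av=Bv$. With $B\subset A$ established, dissipativity and \eqref{eq:1.1} give $\|\Gp u\|_{\Hp}\le\|\Gm u\|_{\Hm}$ for $u\in D(B)$, so $\Phi_0:\Gm u\mapsto \Gp u$ is a well-defined contraction on $\Gm(D(B))$; extend $\Phi_0$ by continuity and by zero on the orthogonal complement of the closure of its domain to a contraction $\Phi:\Hm\to\Hp$. The operator defined by (ii) with this $\Phi$ then contains $B$ and is $m$-dissipative by the first direction; maximality of $m$-dissipative operators gives equality.

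The main obstacle is the density-of-range argument in the first direction, where one must orchestrate \eqref{eq:1.1}, the algebraic decomposition \eqref{eq:1.2}, and the contractivity of $\Phi^*$ in the right order to eliminate every candidate $f$. The verification $B\subset A$ in the converse is more routine but still relies on the quadratic-expansion trick for dissipative operators.
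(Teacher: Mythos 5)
Your argument is correct, and the implication (ii) $\Rightarrow$ (i) follows a genuinely different route from the paper's. For (i) $\Rightarrow$ (ii) you essentially reproduce the paper's proof: the inclusion $B\subset A$ is its Theorem~\ref{th:2.3} (proved there by the same quadratic-expansion trick), and the contractive extension of $\Gm u\mapsto\Gp u$ followed by maximality of $m$-dissipative operators is exactly the paper's ending. The divergence is in (ii) $\Rightarrow$ (i): the paper never computes the range of $\Id-A_\Phi$. Instead it shows every dissipative extension of $A_0$ is contained in some $A_{\Phi'}$ and that $A_{\Phi_1}\subset A_{\Phi_2}$ forces $\Phi_1=\Phi_2$ (Proposition~\ref{prop:3.10}, via the interpolation property); since each $A_\Phi$ is dissipative, this makes every $A_\Phi$ \emph{maximal} dissipative, and Phillips' theorem (maximal dissipative with dense domain $\Rightarrow$ $m$-dissipative) finishes. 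You prove $\ran(\Id-A_\Phi)=V$ by hand: closedness of the range from closedness of $A_\Phi$ (which rests on the automatic continuity of $\Gm,\Gp$, Lemma~\ref{lem:3.4}), and density by showing that $f\perp\ran(\Id-A_\Phi)$ forces $Af=-f$, then $\Gm f=\Phi^*\Gp f$ (using $\Gm\dom(A_\Phi)=\Hm$, again the interpolation property), and finally $2\|f\|_V^2=\|\Gm f\|_{\Hm}^2-\|\Gp f\|_{\Hp}^2\le 0$. Both are sound; the paper's version isolates the abstract mechanism (uniqueness of $\Phi$ plus Phillips) that it reuses for the unitary-group classification, while yours avoids Phillips' theorem altogether at the price of an explicit resolvent argument. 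Two small points you should make explicit: the adjoint $\Phi^*$ and the orthogonality step tacitly use that $\Hm$ and $\Hp$ are complete (Proposition~\ref{prop:3.5}) or else must be carried out in the completions; and the continuity of $\Gm,\Gp$ you invoke for closedness is itself a nontrivial consequence of \eqref{eq:1.1}--\eqref{eq:1.2}, not part of the definition.
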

    	Moreover, $B$ generates a unitary $C_0$-group if and only if $\Phi$ is unitary. In this case we refind the known extension results for symmetric operators in a  more general setting. 
     
    	The literature on boundary triples is very rich. We refer to the references and historical notes in the article \cite{Weg} by Wegner, the two monographs mentioned above, and also to the monograph of Behrndt, Hassi and de Snoo \cite{BHS20}, where extension results are part of an elaborate theory (cf. \cite[Corollary~2.1.4.5]{BHS20}, and we refer also  to the articles of Behrndt--Langer \cite{BL} and Behrndt--Schlosser \cite{BS}. In a previous article \cite{RDV1}, the present authors studied extensions of derivations in a quite different spirit, the main motivation being non-autonomous evolution equations. Even though some ideas used here have their origine in \cite{RDV1}, the present article is completely self-contained. It is organized as follows.      In Section~\ref{sec:2} we investigate dissipative operators including a simple proof of Phillips' Theorem. Boundary quadruples are introduced and investigated in Section~\ref{sec:3}. Extensions which generate a unitary group are characterized in Section~\ref{sec:4}. These results can be transferred to extension results for symmetric operators (Section~\ref{sec:5}). Examples are given in Section~\ref{sec:6}. Section~\ref{sec:6,5} is devoted to the wave equation.  Finally we compare boundary triples as they occur in the literature with  boundary quadruples in Section~\ref{sec:7}.

\section{Dissipative and skew-symmetric operators}\label{sec:2}
Let $V$ be a  Hilbert space over $\K=\R$ or $\C$. We first recall the definitions of several notions associated with dissipativity.  By an operator $A$  on $V$ we always understand  a linear mapping, defined on  a subspace $\dom(A)$ of $V$, its domain, which takes values in $V$.  
\begin{defn}\label{def:various}
	\begin{itemize}
		\item[a)] An operator $A$ with domain  $\dom(A)\subset V$ is \emph{closed} if its graph ${\mathcal G}(A):=\{(x,Ax):x\in\dom(A)\}$ is closed in $V\times V$. 
		\item[b)] An operator $B$ is an \emph{extension} of $A$, in symbols $A\subset B$, if $\dom(A)\subset \dom(B)$ and 
		$Bx=Ax$ for $x\in \dom(A)$. 
		\item[c)] An operator $A$ is called  \emph{closable} if there exists a closed operator  $\overline{A}$ such that ${\mathcal G}(\overline{A})=\overline{{\mathcal G}(A)}$. 
		Then $\overline{A} $ is the smallest closed extension  of $A$. It is called the \emph{closure} of $A$. 
		\item[d)] An operator $A$ on $V$ is called \emph{dissipative} if 
		\[ \re  \langle Ax,x\rangle_V \leq 0 \mbox{ for all }x\in\dom (A). \]
		\item[e)] An operator $A$ on $V$ is called \emph{ maximal dissipative} if $A\subset B$ with $B$ a  dissipative operator  implies that $A=B$.
		\item[f)]  An operator $A$ on $V$ is called \emph{$m$-dissipative} if $A$ is dissipative and $(\Id -A)\dom(A)=V$.  
	\end{itemize}  
\end{defn}  
We now collect well-known properties to see the link between all the notions defined above.
\begin{prop}\label{prop:standard} 
	Let $A$ be an operator on $V$. 
\begin{itemize}
	\item[1)] The operator $A$ on $V$ is dissipative if and only if 
	\begin{equation}\label{eq:2.1n}
		\|x-tAx\|\geq \|x\|\mbox{ for all }x\in\dom (A),\,\, t>0.
	\end{equation} 
\item[2)] If $A$ is $m$-dissipative, then $\dom(A)$ is dense in $V$. 
\item[3)] If $A$ is $m$-dissipative, then $A$ is maximal dissipative. 
\item[4)] If $A$ is dissipative and $\dom(A)$ is dense in $V$, then $A$ is closable and $\overline{A}$ is dissipative.
\item[5)] If $A$ is dissipative and closed, then, for all $t>0$, $(\Id-tA)\dom(A)$ is closed.    
\end{itemize}
\end{prop}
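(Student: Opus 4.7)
My overall plan is to establish part 1) first, since the quantitative estimate $\|x-tAx\|\geq \|x\|$ is the tool that drives parts 2), 3) and 5). For part 1), expand
\[\|x-tAx\|^2 = \|x\|^2 - 2t\,\re\langle Ax,x\rangle_V + t^2\|Ax\|^2.\]
If $A$ is dissipative, the last two terms are nonnegative for $t>0$, giving the inequality. Conversely, the inequality rearranges to $t\|Ax\|^2\geq 2\,\re\langle Ax,x\rangle_V$; letting $t\downarrow 0$ forces dissipativity.

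For part 2), I would take $y\in\dom(A)^{\perp}$ and use $m$-dissipativity to find $x\in\dom(A)$ with $x-Ax=y$. Pairing with $x$ gives $\|x\|^2-\langle Ax,x\rangle_V=\langle y,x\rangle_V=0$, so $\langle Ax,x\rangle_V=\|x\|^2$ is real and, by dissipativity, nonpositive; this forces $x=0$ and hence $y=0$. For part 3), if $B\supset A$ is dissipative and $y\in\dom(B)$, then $(\Id-B)y\in V=(\Id-A)\dom(A)$, so there is $x\in\dom(A)$ with $(\Id-A)x=(\Id-B)y$; since $B$ extends $A$ this reads $(\Id-B)(x-y)=0$, and part 1) applied to $B$ at $t=1$ gives $x=y\in\dom(A)$.

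The main obstacle is part 4), the closability statement. Suppose $x_n\to 0$ in $\dom(A)$ with $Ax_n\to y$; the aim is to show $y=0$. The key trick is to test dissipativity at $x_n+\lambda z$ for an arbitrary $z\in\dom(A)$ and scalar $\lambda\in\K$, which yields
\[\re\langle Ax_n,x_n\rangle_V + \re\bigl(\lambda\langle Az,x_n\rangle_V\bigr) + \re\bigl(\overline{\lambda}\langle Ax_n,z\rangle_V\bigr) + |\lambda|^2\,\re\langle Az,z\rangle_V \leq 0.\]
Passing to the limit in $n$ kills the first two terms and leaves $\re\bigl(\overline{\lambda}\langle y,z\rangle_V\bigr)+|\lambda|^2\,\re\langle Az,z\rangle_V\leq 0$. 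Choosing the phase of $\lambda$ so that $\overline{\lambda}\langle y,z\rangle_V=|\lambda|\,|\langle y,z\rangle_V|$ and letting $|\lambda|\downarrow 0$ forces $\langle y,z\rangle_V=0$; density of $\dom(A)$ then gives $y=0$. Dissipativity of $\overline{A}$ is immediate by passing to the limit in the dissipativity inequality along an approximating sequence for each element of $\dom(\overline{A})$.

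Part 5) is a short consequence of part 1): if $y_n=x_n-tAx_n\to y$, then $\|x_n-x_m\|\leq\|y_n-y_m\|$ shows that $(x_n)$ is Cauchy, with some limit $x$, whence $Ax_n=(x_n-y_n)/t\to(x-y)/t$; closedness of $A$ then places $x\in\dom(A)$ with $(\Id-tA)x=y$, so $(\Id-tA)\dom(A)$ is closed.
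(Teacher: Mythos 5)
Your proof is correct. For parts 1), 3) and 5) your arguments coincide with the paper's: the same expansion of $\|x-tAx\|^2$ followed by $t\downarrow 0$, the same use of surjectivity of $\Id-A$ plus the estimate at $t=1$ to trap $\dom(B)$ inside $\dom(A)$, and the same Cauchy-sequence argument for closedness of the range. The only genuine difference is in parts 2) and 4), where the paper simply cites \cite[Proposition~3.3.8]{ABHN11} and \cite[Lemma~3.4.4]{ABHN11}, whereas you supply self-contained proofs: for 2) the orthogonality argument showing $\dom(A)^\perp=\{0\}$ via $\|x\|^2=\re\langle Ax,x\rangle_V\le 0$, and for 4) the polarization-type trick of testing dissipativity at $x_n+\lambda z$, choosing the phase of $\lambda$, and letting $|\lambda|\downarrow 0$ to conclude $\langle y,z\rangle_V=0$ (implicitly using the standard criterion that $A$ is closable iff $x_n\to 0$, $Ax_n\to y$ forces $y=0$, which is legitimate). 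Your version makes the proposition fully self-contained at the cost of a little extra length; the paper's version keeps the exposition short by outsourcing two standard facts. Both are valid.
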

\begin{proof}  
1) follows from the equivalence 
\[ \|(Id -tA)x\|_V^2\geq \|x\|_V^2\Longleftrightarrow t\|Ax\|_V^2-2\re \langle Ax,x\rangle_V\geq 0,\]
Then, letting $t\to 0$, we prove the well-known characterization of dissipativity.\\
2)  is proved in \cite[Proposition 3.3.8]{ABHN11}. \\
3) Let $A\subset B$ where $B$ is dissipative. Let $x\in \dom(B)$. The surjectivity of $\Id -A$ implies that there exists 
$y\in\dom(A)$ such that $x-Bx=y-Ay=y-By$.  Therefore $B(y-x)=y-x$. By 1) applied with $t=1$, we get $x=y$, and then $A=B$. \\   
4) follows from  \cite[Lemma~3.4.4]{ABHN11} and  1).\\ 
5) Suppose that $(\Id-tA)(x_n)\to y$ for $(x_n)_n\subset \dom(A)$. Therefore   $((\Id-tA)(x_n))_n$ is a Cauchy sequence and by 1) $(x_n)_n$ is also a Cauchy sequence, which converges to say $x\in V$. It follows that $Ax_n\to \frac{x-y}{t}$. Since the graph of $A$ is closed, $Ax=\frac{x-y}{t}$ and thus $y=(\Id- tA)x$.    
\end{proof}

Phillips \cite[Corollary of Theorem 1.1.1]{Ph59} obtained the equivalence between $m$-dissipativity and maximal dissipativity on a Hilbert space  using the Cayley transform. We give a simple direct proof (see also \cite[Theorem 3.1]{Weg} for still another argument). 

\begin{thm}[Phillips]\label{th:2.1n}  
	Let $V$ be a Hilbert space and $A$ an operator on $V$. Then $A$ is $m$-dissipative if and only if $A$ is maximal dissipative and $\dom(A)$ is dense. 
 \end{thm}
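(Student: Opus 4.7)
The forward direction is essentially free from what has already been set up: if $A$ is $m$-dissipative, then parts 2) and 3) of Proposition~\ref{prop:standard} give immediately that $\dom(A)$ is dense and $A$ is maximal dissipative.

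For the converse, assume $A$ is maximal dissipative with dense domain. My plan is to prove that $(\Id-A)\dom(A) = V$. First I would observe that, by part 4) of Proposition~\ref{prop:standard}, $A$ is closable with $\overline{A}$ dissipative; since $\overline{A}$ extends $A$ and $A$ is maximal dissipative, we get $A = \overline{A}$, so $A$ is closed. Then part 5) (applied with $t=1$) tells me that the range $R := (\Id-A)\dom(A)$ is a closed subspace of $V$.

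The heart of the argument is to show $R = V$ by contradiction: suppose otherwise and pick $y \in R^\perp$ with $y \neq 0$. Then for every $x \in \dom(A)$ we have $\langle x - Ax, y\rangle = 0$, i.e. $\langle Ax, y\rangle = \langle x, y\rangle$. I would first observe that $y \notin \dom(A)$: otherwise taking $x = y$ would force $\|y\|^2 = \langle Ay, y\rangle$, whose real part is $\leq 0$ by dissipativity, contradicting $y \neq 0$. Then I would define a strict extension $B \supsetneq A$ by $\dom(B) := \dom(A) + \K y$ and $B(x+\lambda y) := Ax - \lambda y$. A short calculation using $\langle Ax, y\rangle = \langle x, y\rangle$ (and hence $\langle y, x\rangle = \overline{\langle Ax, y\rangle}$) shows that the cross-terms cancel in
\[
\re \langle B(x+\lambda y), x+\lambda y\rangle = \re\langle Ax, x\rangle - |\lambda|^2 \|y\|^2 \leq 0,
\]
so $B$ is a proper dissipative extension of $A$, contradicting maximality.

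The only delicate point I expect is verifying that the extension $B$ is well defined, which reduces to the observation above that $y \notin \dom(A)$ so that the direct sum $\dom(A) + \K y$ is genuine; everything else is a routine cancellation and an application of the already-established Proposition~\ref{prop:standard}. No Cayley transform or approximation argument is needed — the closedness of the range coming from part 5) does the essential work.
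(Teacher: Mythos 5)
Your proof is correct and follows essentially the same route as the paper: reduce to a closed operator with closed range via Proposition~\ref{prop:standard}, then contradict maximality by building a dissipative extension supported on the orthogonal complement of $\ran(\Id-A)$, using $\langle x-Ax,y\rangle=0$ to cancel the cross terms. The only cosmetic difference is that the paper adjoins the whole subspace $R^\perp$ at once (concluding $R^\perp=\{0\}$ directly) while you adjoin a single nonzero $y\in R^\perp$ and argue by contradiction; the well-definedness point you flag is handled in the paper by the same observation, namely $\dom(A)\cap R^\perp=\{0\}$.
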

\begin{proof}
	One implication follows from 3) in Proposition~\ref{prop:standard}.  \\
	Conversely, if $A$ is dissipative with dense domain, then, by  4) in Proposition~\ref{prop:standard}, $A$ is closable and $\overline{A}$ is dissipative. Since $A$ is maximal dissipative, $A=\overline{A}$. By 5) in  Proposition~\ref{prop:standard} with $t=1$, we get $(\Id-A)\dom(A)$ is closed in $V$.  
	Assume that $R:=\ran (\Id -A)\neq V$.   Let us prove that $\dom (A)\cap R^\perp=\{0\}$. Let $x\in \dom(A)\cap R^\perp$. Then 
	\[ \langle x-Ax,x\rangle_V=0.\]
	Hence    $\|x\|_V^2=\re \langle Ax,x\rangle_V\leq 0$ and so $x=0$.  Define $\widehat{A}$ on $V$ by 
	\[ \dom(\widehat{A}):=\dom(A)\oplus R^\perp, \,\, \widehat{A}(x+u)=Ax-u\mbox{ where } x\in\dom(A),u\in R^\perp.\]
	Then $A\subset \widehat{A}$ and 
	\begin{eqnarray*}
		\re \langle \widehat{A}(x+u),x+u\rangle_V & = & \re \langle Ax,x\rangle_V +\re \langle Ax,u\rangle_V  -\re \langle u,x\rangle_V -\|u\|_V^2\\
		& \leq & \re (\langle Ax,u\rangle_V -\langle u,x\rangle_V)\\
		& = &  \re (-\langle x-Ax,u\rangle_V) +\re (\langle x,u\rangle_V-\langle u,x\rangle_V)\\
		& = & \re ( \langle x,u\rangle_V -\overline{\langle x,u\rangle_V})\\
		& = & 0.
	\end{eqnarray*}	
	Thus $\widehat{A}$ is dissipative. Hence $\dom (A)=\dom(\widehat{A})$, i.e. $R^\perp =\{0\}$. We have shown that $R=V$; i.e. $A$ is $m$-dissipative.   
\end{proof}

By Zorn's lemma, each densely defined dissipative operator has a maximal dissipative extension. 

By the Lumer--Phillips Theorem \cite[Theorem 3.4.5 and Proposition 3.3.8]{ABHN11}, an operator $A$ on $V$ generates a $C_0$-semigroup of contractions if and only if $A$ is $m$-dissipative. 

Now we introduce the basic objects of this article, a skew-symmetric operator which can be characterized as follows.  We continue to consider the real and complex case simultaneously. 
\begin{prop}\label{prop:2.2}
Let $A$ be an operator on $V$. The following assertions are equivalent:
\begin{itemize}
	\item[(i)] $\re \langle Au,u\rangle_V=0$ for all $u\in D(A)$;
	\item[(ii)] $\pm A$ is dissipative;
	\item[(iii)] $A$ is \emph{skew-symmetric}, i.e. 
	\[   \langle Au,v\rangle_V +\langle u,Av\rangle_V=0\]
	for all $u,v\in \dom (A)$.   
\end{itemize}
\end{prop}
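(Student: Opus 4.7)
The proof is a routine polarization argument. I would organize it as a cycle (i) $\Leftrightarrow$ (ii), (iii) $\Rightarrow$ (i), (i) $\Rightarrow$ (iii).

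First, (i) $\Leftrightarrow$ (ii) is immediate from the definition: by Definition~\ref{def:various}(d), $A$ is dissipative iff $\re\langle Au,u\rangle_V\le 0$ and $-A$ is dissipative iff $-\re\langle Au,u\rangle_V\le 0$, so both hold simultaneously iff $\re\langle Au,u\rangle_V=0$. Next, (iii) $\Rightarrow$ (i) is obtained by taking $v=u$ in the skew-symmetry identity: $\langle Au,u\rangle_V+\langle u,Au\rangle_V=0$, which rewrites as $2\re\langle Au,u\rangle_V=0$.

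The substantive step is (i) $\Rightarrow$ (iii), which I would prove by polarization. In the real case, I would apply (i) to $u+v$ and expand: bilinearity gives
\[
0=\langle A(u+v),u+v\rangle_V=\langle Au,u\rangle_V+\langle Au,v\rangle_V+\langle Av,u\rangle_V+\langle Av,v\rangle_V,
\]
and after cancelling the two diagonal terms, using symmetry of the real inner product $\langle Av,u\rangle_V=\langle u,Av\rangle_V$, we obtain (iii). In the complex case, I would apply (i) first to $u+v$ to get
\[
\re\bigl(\langle Au,v\rangle_V+\langle Av,u\rangle_V\bigr)=0,
\]
and then to $u+iv$, expanding with sesquilinearity (linear in the first variable, conjugate linear in the second) to reach $\Im\langle Au,v\rangle_V-\Im\langle Av,u\rangle_V=0$. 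Combining the two identities gives $\langle Au,v\rangle_V+\overline{\langle Av,u\rangle_V}=0$, i.e.\ $\langle Au,v\rangle_V+\langle u,Av\rangle_V=0$, which is (iii).

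There is no real obstacle here; the only place to be careful is the sign bookkeeping when expanding $\langle A(u+iv),u+iv\rangle_V$ in the complex case, since $\re(iz)=-\Im z$ reverses a sign. Treating both scalar fields $\K=\R$ and $\K=\C$ uniformly simply amounts to noting that the $u+iv$ test is vacuous when $\K=\R$ and that real symmetry of $\langle\cdot,\cdot\rangle_V$ already suffices.
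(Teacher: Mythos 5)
Your proof is correct and follows essentially the same route as the paper: (i)$\Leftrightarrow$(ii) from the definition, (iii)$\Rightarrow$(i) by taking $v=u$, and (i)$\Rightarrow$(iii) by polarization on $u+v$. The only cosmetic difference is in the complex case, where you test with $u+iv$ to extract the imaginary part while the paper replaces $u$ by $\lambda u$ for arbitrary $\lambda\in\C$; both yield the same conclusion.
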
 
\begin{proof}
$(i)\Longleftrightarrow (ii)$ is obvious.\\
$(iii)\Rightarrow (i)$ Take $u=v$. \\
$(i)\Rightarrow (iii)$ Note that 
\begin{eqnarray*}
0 & = & \re \langle A(u+v),u+v\rangle_V\\
 & = & \re \left( \langle Au,v\rangle_V + 	\langle Av,u\rangle_V\right)\\
  & = & \re \left( \langle Au,v\rangle_V + 	\langle u,Av\rangle_V\right).
\end{eqnarray*}	 
This proves $(iii)$ in the real case. If $\K=\C$, then replacing $u$ by $\lambda u$ we see that 
\[ \re \lambda \left( \langle Au,v\rangle_V + 	\langle u,Av\rangle_V\right)\]
for all $\lambda\in\C$ and hence $\langle Au,v\rangle_V +\langle u,Av\rangle_V=0$
for all $u,v\in \dom (A)$.   
\end{proof}
If $B$ is a densely defined operator on $V$, the adjoint $B^*$ of $B$ is defined as follows.  For $v,f\in V$, 
\[v\in \dom( B^*) \mbox{ and }B^*v=f \Longleftrightarrow \langle Bu,v\rangle_V =\langle u,f\rangle_V,\,\,u\in \dom(B).\]
Using this definition we see that a densely defined operator $A_0$ on $V$ is skew-symmetric if and only if $A_0\subset (-A_0)^* $. 

Our aim is to describe all $m$-dissipative extensions of such an operator $A_0$. It turns out that they all are restrictions of $(-A_0)^*$. A proof using Cayley transform is given in \cite[Chapter 3, Theorem 1.3 p. 150]{GG} and \cite[Proposition 2.8]{Weg}. We give a much shorter direct proof. 
\begin{thm}\label{th:2.3}
Let $A_0$ be a densely defined skew-symmetric operator  and $B$ a dissipative operator such that $A_0\subset B$. Then   $B\subset (-A_0)^*$. 
\end{thm}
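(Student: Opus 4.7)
The plan is to show, for arbitrary $v\in\dom(B)$, that $v\in\dom((-A_0)^*)$ with $(-A_0)^{*}v=Bv$. Since $\dom(A_0)$ is dense in $V$, this reduces to proving the single identity
\[
\langle A_0 u,v\rangle_V+\langle u,Bv\rangle_V=0\qquad\text{for every }u\in\dom(A_0)\text{ and }v\in\dom(B).
\]

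To derive this, I would apply the dissipativity of $B$ to the two-parameter family of test vectors $w:=u+sv$, with $u\in\dom(A_0)$, $v\in\dom(B)$, and $s\in\K$. Since $A_0\subset B$, such $w$ lies in $\dom(B)$ and $Bw=A_0 u+sBv$. Expanding the inequality $\re\langle Bw,w\rangle_V\leq 0$ and using the skew-symmetry of $A_0$ to annihilate the diagonal term $\re\langle A_0 u,u\rangle_V$, the two cross terms reassemble into
\[
\re\bigl[\bar s\,(\langle A_0 u,v\rangle_V+\langle u,Bv\rangle_V)\bigr]+|s|^{2}\,\re\langle Bv,v\rangle_V\leq 0,\qquad s\in\K.
\]

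Writing $\beta:=\langle A_0 u,v\rangle_V+\langle u,Bv\rangle_V$ and $c:=-\re\langle Bv,v\rangle_V\geq 0$, this reads $\re(\bar s\,\beta)\leq c|s|^{2}$. If $\beta\neq 0$, inserting $s=\varepsilon\,\beta/|\beta|$ with $\varepsilon>0$ yields $\varepsilon|\beta|\leq c\varepsilon^{2}$, and letting $\varepsilon\to 0^{+}$ forces $|\beta|=0$, a contradiction. Hence $\beta=0$, which proves the displayed identity and thereby the inclusion $B\subset(-A_0)^{*}$. The only genuine idea is the choice of test vector coupling an element of $\dom(A_0)$ with one of $\dom(B)$; everything else is direct expansion of an inner product followed by a scaling argument in $s$, valid uniformly for $\K=\R$ or $\C$, so no substantial obstacle is anticipated.
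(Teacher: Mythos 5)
Your proof is correct and follows essentially the same route as the paper: both expand the dissipativity inequality on a perturbation of an element of $\dom(A_0)$ by a small multiple of an element of $\dom(B)$, use skew-symmetry to kill the diagonal term, and then scale the small parameter to zero to extract the adjoint identity. Your choice of a single scalar $s\in\K$ with $s=\varepsilon\beta/|\beta|$ merely packages into one step what the paper does separately (dividing by $t>0$, replacing $x$ by $\pm x$, and taking $\lambda=1,i$ in the complex case), so the argument is the same in substance.
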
  
\begin{proof}
Let $y\in\dom (B)$. Let $x\in \dom(A_0)$. Then 
\[ \re \langle Bx,x\rangle _V =\re \langle A_0 x,x\rangle_V=0 . \]
Hence, for $t>0$, 
\begin{eqnarray*}
0 & \leq & \re \langle B(x+ty),x+ty\rangle_V	\\
 & = & t\re \langle Bx,y\rangle_V + t\re \langle By,x\rangle_V + t^2 \re \langle By,y\rangle_V.	
\end{eqnarray*}
Dividing by $t$ and letting $t\to 0$, we get
\[   0\leq  \re \langle A_0 x,y\rangle_V +\re \langle By,x\rangle_V.\]
Since the above inequality holds for $\pm x$, it follows that
 \[   0= \re \langle A_0 x,y\rangle_V +\re \langle By,x\rangle_V.\]
If $\K=\R$, since $x\in \dom(A_0)$ is arbitrary, this implies that $y\in \dom(A_0^*)$ and $By=(-A_0)^* y$. \\
If $\K=\C$, replacing $t$ by $\lambda t$ with $\lambda\in\C$, the above argument shows that 
\begin{equation}\label{eq:2.1}
\re \left( \overline{\lambda}\langle A_0 x,y\rangle_V +\lambda \langle By,x\rangle_V \right) = 0	
\end{equation}  
for all $\lambda\in \C$ and $x\in\dom(A_0)$. 
Choosing $\lambda =1$ and then $\lambda=i$, we get 
\[  \langle A_0 x,y\rangle_V +  \langle x,By\rangle_V=0,\]
and the  conclusion follows as  in the case $\K=\R$.  
\end{proof}

\section{Boundary quadruples and $m$-dissipative restrictions}\label{sec:3}
In this section we introduce and study the basic notion of this article.  
Let $V$ be a  Hilbert space over $\K=\R$ or $\C$ and let $A_0$ be a densely defined skew-symmetric operator on $V$. Let $A=(-A_0)^*$. Then, as a consequence of the definition of the adjoint 
\begin{equation}\label{eq:3.1}
	A_0\subset A.
\end{equation}	
\begin{defn}\label{def:3.1}
A \emph{boundary quadruple} $(\Hm,\Hp,\Gm  ,\Gp  )$ for $A_0$ consists of  pre-Hilbert spaces $\Hm,\Hp$ and linear maps $\Gm:\dom(A)\to \Hm$, $\Gp:  \dom(A)\to \Hp$ satisfying  
\begin{equation}\label{eq:3.2} 
	\langle Au,w\rangle_V +\langle u,Aw\rangle_V=\langle \Gp   u,\Gp  w\rangle_{\Hp} -\langle \Gm   u,\Gm   w\rangle_{\Hm}
\end{equation}
for all $u,w\in\dom(A)$, 
\begin{equation}\label{eq:3.3}
	\ker \Gp   +\ker \Gm  =\dom(A),
\end{equation} 
and 
\begin{equation}\label{eq:3.4n}
	\Hm=\Gm \dom(A),\,\,    \Hp=\Gp\dom(A).
\end{equation} 
\end{defn}
We will see below that these assumptions imply that $\Hm$ and $\Hp$ are actually complete and that $\Gm,\Gp$ are continuous (with respect to the graph norm on $\dom(A)$).
 
Boundary quadruples do always exist. Below are two possible constructions which always work. However, for concrete examples, other choices might be convenient. We let   
\begin{equation}\label{eq:3.4}
b(u,v)=\langle Au,v\rangle_V + \langle u,Av\rangle_V,\,\, u,v\in \dom(A). 	
\end{equation} 
Then $b:\dom(A)\times \dom(A)\to \K$ is a symmetric sesquilinear form. We call it the \emph{boundary form} associated with $A_0$.

We first show that Condition \eqref{eq:3.3} implies the following \emph{interpolation property}. 
\begin{lem}\label{lem:3.2}
	Let $\xm    \in \Gm  \dom(A)$,  $\xp    \in \Gp   \dom(A)$. Then there exists $w\in\dom(A)$ such that $\Gm   w=\xm    $ and $\Gp  w=\xp    $.  
\end{lem}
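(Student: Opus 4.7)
The plan is to exploit the algebraic splitting~\eqref{eq:3.3} in order to realize the two prescribed boundary values independently of one another. The statement is purely algebraic, so I expect to use only~\eqref{eq:3.3} together with the surjectivity condition~\eqref{eq:3.4n}; neither the boundary form identity~\eqref{eq:3.2} nor any completeness/continuity property of $\Gm,\Gp$ should be needed.

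First I would pick, by~\eqref{eq:3.4n}, elements $u,v\in\dom(A)$ with $\Gm u=\xm$ and $\Gp v=\xp$. Next, applying~\eqref{eq:3.3} to each of them, I decompose
\[
 u=u_1+u_2,\qquad v=v_1+v_2,
\]
with $u_1,v_1\in\ker\Gm$ and $u_2,v_2\in\ker\Gp$. Then $\Gm u_2=\Gm u=\xm$ because $\Gm u_1=0$, and symmetrically $\Gp v_1=\Gp v=\xp$ because $\Gp v_2=0$.

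Finally I set $w:=u_2+v_1\in\dom(A)$. Linearity of $\Gm$ and $\Gp$ together with $u_2\in\ker\Gp$ and $v_1\in\ker\Gm$ give $\Gm w=\Gm u_2=\xm$ and $\Gp w=\Gp v_1=\xp$, which is what we want.

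The argument is elementary once the right decomposition is written down; if there is any subtle point at all, it is purely a bookkeeping one, namely to notice that the two summands of the decomposition of $u$ (resp.\ $v$) reverse their roles with respect to $\Gm$ and $\Gp$, so that the useful piece from the decomposition of $u$ is the one in $\ker\Gp$, while the useful piece from the decomposition of $v$ is the one in $\ker\Gm$. There is no real obstacle beyond keeping this distinction straight.
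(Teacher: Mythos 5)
Your proof is correct and is essentially identical to the paper's: both choose preimages of $\xm$ and $\xp$, split each according to $\ker\Gm+\ker\Gp=\dom(A)$, and add the $\ker\Gp$-piece of the first to the $\ker\Gm$-piece of the second. Only the notation differs.
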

   \begin{proof}
   	There exist $w_1,w_2\in\dom(A)$ such that $\xm    =\Gm  w_1$, $\xp    =\Gp   w_2$. By \eqref{eq:3.3}, $w_1=w_{1-} + w_{1+}$ and  $w_2=w_{2-} + w_{2+}$ with $w_{1-},w_{2-}\in \ker \Gm  $,  $w_{1+},w_{2+}\in \ker \Gp  $. Let $w=w_{1+} + w_{2-}$. Then 
   	\[  \Gm   w=\Gm  w_{1+}=\Gm  (w_{1+} + w_{1-})=\Gm  w_1=\xm    .\] 
   	Similarly
   	\[  \Gp   w=\Gp  (w_{1+}+w_{2-})=\Gp   w_{2-} = \Gp  (w_{2-} + w_{2+})=\Gp  w_2=\xp    .\] 
   \end{proof}
\begin{rem}\label{rem:3.3}
	The interpolation property of Lemma~\ref{lem:3.2} is equivalent to \eqref{eq:3.3}. Indeed, given  $w\in \dom(A)$ and assuming the interpolation property, we find $w_1\in \dom(A)$ such that $\Gp   w_1=\Gp   w$ and $\Gm  w_1=0$. Thus $w_1\in \ker \Gm  $ and $w-w_1\in \ker \Gp  $, $w=(w-w_1)+w_1\in \ker \Gp  +\ker \Gm  $.   
\end{rem}
We consider $\dom(A)$ with the graph norm
\[   \|u\|_{\dom(A)}^2:=\|u\|_V^2 +\|Au\|_V^2.\]
Then $\dom(A)$ is a Hilbert space with $\|\,\|_{\dom(A)}$ as corresponding norm. We first prove that $\Gm  $ and $\Gp  $ are automatically continuous for this norm. 
\begin{lem}\label{lem:3.4}
The operators $\Gm:\dom(A)\to \Hm$,  $\Gp  :\dom(A)\to \Hp$ are continuous. 	
\end{lem}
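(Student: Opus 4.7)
The plan is to apply the closed graph theorem, but some care is needed because $\Hm$ and $\Hp$ are only assumed to be pre-Hilbert. I would first pass to the Hilbert completions $\widetilde{\Hm}$ and $\widetilde{\Hp}$ of $\Hm$ and $\Hp$, view $\Gm$ and $\Gp$ as taking values in these complete spaces, and show that these maps are closed. Then the closed graph theorem (applicable because $\dom(A)$ with the graph norm is complete) yields continuity into the completion, hence into $\Hm$ and $\Hp$ themselves, since those carry the induced inner products.

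To prove that $\Gm \colon \dom(A) \to \widetilde{\Hm}$ is closed, suppose $u_n \to u$ in the graph norm of $\dom(A)$ and $\Gm u_n \to y$ in $\widetilde{\Hm}$; the goal is $y = \Gm u$. For an arbitrary $\xm \in \Hm$, I would use the interpolation property (Lemma~\ref{lem:3.2}, which is equivalent to \eqref{eq:3.3} as noted in Remark~\ref{rem:3.3}) to select $w \in \dom(A)$ with $\Gm w = \xm$ and $\Gp w = 0$. Plugging $w$ and $u_n$ into \eqref{eq:3.2} gives
\[
\langle Au_n, w\rangle_V + \langle u_n, Aw\rangle_V = -\langle \Gm u_n, \xm\rangle_{\Hm}.
\]
The left-hand side is continuous in $u_n$ with respect to the graph norm (since $w$ is fixed), so it converges to $\langle Au, w\rangle_V + \langle u, Aw\rangle_V = -\langle \Gm u, \xm\rangle_{\Hm}$, again by \eqref{eq:3.2}. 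The right-hand side converges to $-\langle y, \xm\rangle_{\widetilde{\Hm}}$. Hence $\langle y - \Gm u, \xm\rangle_{\widetilde{\Hm}} = 0$ for every $\xm \in \Hm$, and density of $\Hm$ in its completion forces $y = \Gm u$.

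The analogous argument works for $\Gp$: given $\xp \in \Hp$, interpolation provides $w \in \dom(A)$ with $\Gm w = 0$ and $\Gp w = \xp$, and the same manipulation of \eqref{eq:3.2} identifies any graph-limit of $\Gp u_n$ with $\Gp u$. The closed graph theorem then delivers the announced continuity estimates, namely the existence of constants $C_\pm$ with $\|\Gm u\|_{\Hm} \le C_-\|u\|_{\dom(A)}$ and $\|\Gp u\|_{\Hp} \le C_+\|u\|_{\dom(A)}$ for all $u \in \dom(A)$.

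The main subtlety I anticipate is exactly the passage through completions: one must not confuse the potentially incomplete target spaces $\Hm, \Hp$ with their completions when applying the closed graph theorem, and one must exploit that the test vectors $\xm, \xp$ used to recover $y$ from weak-type identities range over a dense subspace. Beyond this, the proof is a clean application of the boundary identity \eqref{eq:3.2} combined with the interpolation property, and should be quite short.
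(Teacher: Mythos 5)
Your proof is correct and follows essentially the same route as the paper: pass to the completions, apply the closed graph theorem on the Hilbert space $\dom(A)$ with the graph norm, and identify the limit using the boundary identity \eqref{eq:3.2} together with the interpolation property of Lemma~\ref{lem:3.2}. The only (harmless) difference is that you decouple $\Gm$ and $\Gp$ by choosing interpolants with one component zero, whereas the paper treats the pair $(\Gm,\Gp)$ jointly and concludes by testing against $z_+=\xp-\Gp w$ and $z_{\mbox{\tiny{-}}}=\Gm w-\xm$.
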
	 
\begin{proof}
	Consider the mapping $G:\dom(A)\to \overline{\Hm}\times \overline{\Hp}$ given by 
	\[ G(w)=(\Gm  (w),\Gp  (w))\in \overline{\Hm}\times \overline{\Hp} ,\]
	where $\overline{\Hm}$, $\overline{\Hp}$ are the completions of $\Hm$ and $\Hp$ respectively.
	It suffices to show that $G$ has a closed graph. For that let $w_n\to w$ in $\dom(A)$ such that $G(w_n)\to (\xm    ,\xp    )\in \overline{\Hm}\times \overline{\Hp}$. Note that  $b$ is symmetric and hence automatically continuous as a consequence of the Closed Graph Theorem.   Thus 
	\begin{eqnarray*}
		\langle \xp    ,\Gp  v\rangle_{\Hp}-\langle \xm    ,\Gm  v\rangle_{\Hm} & = & \lim_{n\to\infty} \left(\langle \Gp  w_n,\Gp  v\rangle_{\Hp}-\langle \Gm  w_n,\Gm  v\rangle_{\Hm}	\right) \\
		 & = & \lim_{n\to\infty}b(w_n,v)\\
		  & = & b(w,v)\\
		  & = & \langle \Gp  w,\Gp  v\rangle_{\Hp} -\langle \Gm   w,\Gm  v\rangle_{\Hm}
		\end{eqnarray*}
	for all $v\in \dom(A)$. Thus 
	\[   \langle \xp    -\Gp  w,\Gp  v\rangle_{\Hp} +\langle \Gm  w-\xm    ,\Gm   v\rangle_{\Hm}=0\]
	for all $v\in\dom(A)$. Let $z_{\mbox{\tiny{-}}}\in \Gm  \dom(A)$, $z_+\in \Gp  \dom(A)$. By the interpolation property of Lemma~\ref{lem:3.2}, there exists $v\in\dom(A)$ such that $z_{\mbox{\tiny{-}}}=\Gm  v$, $z_+=\Gp   v$. Thus 
	\[  \langle \xp     -\Gp   w,z_+\rangle_{\Hp} + \langle \Gm   w-\xm    ,z_{\mbox{\tiny{-}}}\rangle_{\Hm}=0.\] 
	Passing to limits this remains true for all $z_{\mbox{\tiny{-}}}\in \overline{\Hm}$, $z_+\in \overline{\Hp}$.
	 
	Choosing $z_{+}=\xp    -\Gp  w$  and $z_{\mbox{\tiny{-}}}=\Gm  w-\xm$, it follows that 
	\[    \|\xp    -\Gp  w\|_{\Hp}^2 +\|\Gm  w-\xm    \|_{\Hm}^2=0.\]
	Hence $\xp    =\Gp  w$, $\xm    =\Gm  w$.   
	
\end{proof}
Next we show that $\Hm  $ and $\Hp  $ are complete. We could deduce it from \cite[Proposition~4.10]{RDV1} but we prefer to give a direct proof. 
\begin{prop}\label{prop:3.5}
The spaces $\Gm  \dom(A)=:\Hm$  and $\Gp  \dom(A)=:\Hp$ are complete.   
\end{prop}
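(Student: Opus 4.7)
My plan is to prove that $\Hm$ is complete; the argument for $\Hp$ is symmetric. I would identify $\Hm$ with a quotient of a closed subspace of the Hilbert space $\dom(A)$, and transfer the Banach structure of that quotient to the $\Hm$-inner product.

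Set $K_+:=\ker\Gp$, $K_-:=\ker\Gm$ and $K:=K_+\cap K_-$. By Lemma~\ref{lem:3.4} the maps $\Gm$ and $\Gp$ are continuous on $(\dom(A),\|\cdot\|_{\dom(A)})$, hence $K_+$, $K_-$ and $K$ are closed subspaces. The interpolation property of Lemma~\ref{lem:3.2} applied with $\xp=0$ shows that $\Gm|_{K_+}\colon K_+\to\Hm$ is surjective, with kernel exactly $K$. It therefore factors as a continuous linear bijection
\[
\tilde{\Gm}\colon K_+/K\longrightarrow\Hm,
\]
where $K_+/K$, equipped with the quotient graph norm, is a Banach space (quotient of a Hilbert space by a closed subspace).

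The identity \eqref{eq:3.2} restricted to $u,v\in K_+$ simplifies to
\[
\langle\Gm u,\Gm v\rangle_{\Hm}=-b(u,v)=-\langle Au,v\rangle_V-\langle u,Av\rangle_V,
\]
so the $\Hm$-inner product is pulled back under $\Gm|_{K_+}$ to the explicit form $-b$ on $K_+$; completeness of $\Hm$ is therefore equivalent to completeness of $K_+/K$ under $-b$. Viewing $\tilde{\Gm}$ as a continuous linear injection $K_+/K\to\overline{\Hm}$ into the Hilbert completion, it suffices to prove that its image (which is $\Hm$) is closed. The open mapping theorem applied to the bijection $\tilde{\Gm}\colon K_+/K\to\Hm$ between Banach spaces then yields that $\tilde{\Gm}$ is a homeomorphism, and $\Hm$ inherits completeness from $K_+/K$.

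The crux is to establish that $\tilde{\Gm}$ is bounded below, i.e.\ that $\|\Gm u\|_{\Hm}\geq c\,\|[u]\|_{K_+/K}$ for some $c>0$ and every $u\in K_+$. Equivalently, given a Cauchy sequence $(x_n)\subset\Hm$, one must lift it to representatives $w_n\in K_+$ orthogonal to $K$ in the graph norm that form a Cauchy sequence in $(\dom(A),\|\cdot\|_{\dom(A)})$; the limit $w\in K_+$ will then satisfy $\Gm w=\lim x_n$ by continuity of $\Gm$, providing the required limit in $\Hm$. I expect this lower bound to come from a closed-graph argument in the spirit of Lemma~\ref{lem:3.4}, applied this time to the inverse relation and using the boundary form identity above together with the closedness of $A=-A_0^*$.
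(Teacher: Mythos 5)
Your reduction is set up correctly: $K_+=\ker\Gp$ is closed by Lemma~\ref{lem:3.4}, $\Gm|_{K_+}$ maps onto $\Hm$ with kernel $K=\ker\Gp\cap\ker\Gm$ by Lemma~\ref{lem:3.2}, and on $K_+$ the identity \eqref{eq:3.2} does give $\langle \Gm u,\Gm v\rangle_{\Hm}=-b(u,v)$. But the proof stops exactly at the point where the work has to be done. You state that ``the crux is to establish that $\tilde{\Gm}$ is bounded below'' and then only say you \emph{expect} this to follow from a closed-graph argument applied to the inverse. That expectation cannot be realized as described: both the open mapping theorem and the closed graph theorem require the \emph{target} (respectively the domain of the inverse) to be complete, and that target is $\Hm$ itself --- precisely the space whose completeness is in question. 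Equivalently, the lower bound $\|\Gm u\|_{\Hm}\geq c\,\|[u]\|_{K_+/K}$ is a \emph{consequence} of the completeness of $\Hm$ (via the open mapping theorem applied to the continuous bijection $\tilde{\Gm}:K_+/K\to\Hm$), so deriving completeness from it is circular unless you produce the bound by an independent argument. Your proposal contains no such argument, so there is a genuine gap at the decisive step.

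The paper closes this gap by a duality argument rather than a quantitative lower bound: given $x_0$ in the closure of $\Hm$ inside the completion $\overline{\Hm}$, the functional $F(u)=\langle \Gm u,x_0\rangle_{\Hm}$ is continuous on the Hilbert space $(\dom(A),\|\cdot\|_{\dom(A)})$, hence of the form $F(u)=\langle u,w_1\rangle_V+\langle Au,w_2\rangle_V$; testing against $u\in\dom(A_0)$ and using $A=(-A_0)^*$ forces $w_2\in\dom(A)$ with $Aw_2=w_1$, whence $F(u)=b(u,w_2)=\langle\Gp u,\Gp w_2\rangle_{\Hp}-\langle\Gm u,\Gm w_2\rangle_{\Hm}$. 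Restricting to $u\in\ker\Gp$ (whose image under $\Gm$ is all of $\Hm$ by \eqref{eq:3.3}) gives $\langle y,x_0+\Gm w_2\rangle=0$ for all $y\in\Hm$, hence $x_0=-\Gm w_2\in\Hm$. If you want to salvage your quotient picture, you would need to supply an argument of this Riesz-representation type to show that every limit point of $\Hm$ in $\overline{\Hm}$ is already attained by some $w_2\in\dom(A)$; without it the proof is incomplete.
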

\begin{proof}
a) We show that $\Gm  \dom(A)$ is closed in the completion $\overline{\Hm}$ of $\Hm$. Let $x_0\in\overline{\Gm  \dom(A)}$. Define the continuous linear form $F:\dom(A)\to\K$ by 
\[  F(u)=\langle \Gm  u,x_0\rangle_{\Hm}.\]
Then there exist 
$w_1,w_2\in V$
such that 
\[   F(u)=\langle u,w_1\rangle_V +\langle Au,w_2\rangle_V\]
for all $u\in\dom(A)$ (since each continuous linear from on $\dom(A)$ is of this form). In particular, 
\[   0=\langle u,w_1\rangle_V +\langle Au,w_2\rangle_V \]
for all $u\in \dom(A_0)$. Thus $w_1=Aw_2$. Consequently, 
\begin{eqnarray*}
\langle \Gm   u,x_0\rangle_{\Hm} & = & F(u)\\
 & = & \langle u,Aw_2\rangle_V +\langle Au,w_2\rangle_V\\
  & = & \langle \Gp  u,\Gp  w_2\rangle_{\Hp} -\langle \Gm  u,\Gm   w_2\rangle_{\Hm} 	
\end{eqnarray*}	     
for all $u\in\dom(A)$. In particular, 
\[   \langle \Gm  u, x_0 +\Gm   w_2\rangle_{\Hm} =0\]
for all $u\in \ker \Gp  $. Since, by \eqref{eq:3.3}, $\Gm  \ker \Gp  =\Gm  \dom(A)$, it follows that 
\[  \langle y,x_0 + \Gm   w_2\rangle_{\Hm}=0\]
for all $y\in \Gm  \dom(A)=\Hm$. Since $x_0+\Gm  w_2\in \overline{\Hm}$, it follows that $x_0 +\Gm  w_2=0$. Hence $x_0=-\Gm   w_2\in \Gm  \dom(A)=\Hm$. \\
b) Note that $(\Hp,\Hm,\Gp  ,\Gm  )$ is a boundary quadruple for $-A_0$.    Thus it follows from a) that $\Hp$ is complete. 
\end{proof}

Now we give two examples of  boundary quadruples which always exist.  
\begin{exam}[First example]\label{ex:3.6}
Consider $W:=\dom(A)$ with the graph norm, as before. Since $b$ is symmetric there exists a selfadjoint operator $B\in{\mathcal L}(W)$ such that $b(u,v)=\langle Bu,v\rangle_V$ for all $u,v\in V$. By the spectral theorem \cite[VIII.3]{RS80}, up to a unitary equivalence, $W=L^2(\Omega,\mu)$ and $Bf=mf$ for all $f\in W$ and some real-valued $m\in L^\infty(\Omega,\mu)$ where $(\Omega,\Sigma,\mu)$ is a measure space. Define $\Gp  f=\sqrt{m^+}f$ and $\Gm  f=\sqrt{m^-} f$ for all $f\in L^2(\Omega,\mu)$, where $m^+=\max(m,0)$ and $m^-=-\min(m,0)$. Then $(\Gm L^2(\Omega,\mu), \Gp L^2(\Omega,\mu),\Gm  ,\Gp  )$ is a  boundary quadruple.     
\end{exam}
\begin{exam}[Second example]
 The second construction depends on the following decomposition
\begin{equation}\label{eq:3.5} 
   \dom(A)=\dom(\overline{A_0})\oplus \ker (\Id -A)\oplus \ker (\Id+A),
\end{equation}
where $\overline{A_0}$ is the closure of $A_0$ (which is again skew-symmetric). A proof can be found in 
\cite[Lemma 2.5]{Weg} or \cite[Chapter 3, Theorem 1.1, p. 148]{GG} or \cite[X.1 Lemma]{RS75}. We give a proof to be as self-contained as possible, and we treat again the real and complex case simultaneously. First we note the decomposition
  \begin{equation}\label{eq:3.6} 
   V=\ran (\Id -\overline{A_0})\oplus \ker (\Id +A)
  \end{equation}    
which is a Hilbert direct sum since $\overline{\ran}(\Id -A_0)=\ran (\Id-\overline{A_0})$ (a consequence of dissipativity) and 
\[ \ran (\Id-A_0)^\perp=\ker (\Id-A_0^*)=\ker(\Id +A).\]
\begin{proof}[Proof]  of \eqref{eq:3.5}]
a) \emph{Linear independence}. Let $0=x_0 +\xp    +\xm    $ with 
\[x_0\in \dom(\overline{A_0}), \,\,\xp    ,\xm    \in\dom(A),\,\, A\xp    =\xp    ,\,\, A\xm    =-\xm     .\]
Then $0=(\Id -\overline{A_0})x_0 +2\xm    $. Since $2\xm    \in \ker (\Id+A)$, it follows from \eqref{eq:3.6} that $2\xm    =0$ and $(\Id-A_0)x_0=0$. From dissipativity we obtain now 
\[  0=\re \langle (\Id-A_0)x_0,x_0\rangle_V=\|x_0\|_V^2-\re \langle A_0x_0,x_0\rangle_V\geq \|x_0\|_V^2,\]
hence $x_0=0$. Thus also $\xp    =0$. \\
b) Let $x\in \dom(A)$. By \eqref{eq:3.6} there exist $x_0\in \dom(\overline{A_0})$ and $y_0\in \ker (\Id + A)$ such that 
  \[  (\Id - A)x=(\Id -\overline{A_0})x_0 +y_0=(\Id -A)(x_0+\frac{y_0}{2}).\]
  Thus $x-x_0-\frac{y_0}{2}   \in \ker (\Id -A)$ and 
  \[  x=x_0 +(x-x_0-\frac{y_0}{2}   ) +\frac{y_0}{2}    \in \dom(\overline{A_0}) +\ker (\Id -A) + \ker(\Id +A).\]	
  \end{proof}
\end{exam}
\begin{prop}\label{prop:3.7}
For $w\in\dom(A)$, let $w=w_0+w_+ +w_{\mbox{\tiny{-}}}$ be the decomposition according to \eqref{eq:3.5}, where $w_0\in\dom(\overline{A_0}),\,\, w_+,w_{\mbox{\tiny{-}}}\in\dom(A)$, $Aw_+=w_+, \,\,-A w_{\mbox{\tiny{-}}}=w_{\mbox{\tiny{-}}}$. Then $\Gp  w:=w_+$, $\Gm  w:=w_{\mbox{\tiny{-}}}$ defines a boundary quadruple $(\Hm,\Hp,\Gm  ,\Gp  )$, with $\Hm=\ker(\Id +A)$ and $\Hp=\ker(\Id -A)$.
\end{prop}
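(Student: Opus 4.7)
The plan is to verify each condition in Definition~\ref{def:3.1} directly from the decomposition \eqref{eq:3.5}, being careful to fix the correct inner products on $\Hp$ and $\Hm$. I first note that because $A=(-A_0)^*$ is closed on $\dom(A)$, the subspaces $\ker(\Id-A)$ and $\ker(\Id+A)$ are closed in $V$ and hence complete; I then equip $\Hp:=\ker(\Id-A)$ and $\Hm:=\ker(\Id+A)$ with the rescaled inner product $2\langle\cdot,\cdot\rangle_V$ (the factor $2$ will be forced by the boundary identity below). Well-definedness and linearity of $\Gp w:=w_+$ and $\Gm w:=w_-$ follow from the uniqueness in \eqref{eq:3.5}.

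The surjectivity \eqref{eq:3.4n} is immediate: every $x\in\ker(\Id-A)$ has trivial decomposition $0+x+0$, so $\Gp x=x$, and similarly $\Gm x=x$ for $x\in\ker(\Id+A)$. For the splitting \eqref{eq:3.3}, given $w=w_0+w_++w_-$ I write $w=(w_0+w_+)+w_-$; the decomposition of $w_0+w_+$ is $w_0+w_++0$, so $\Gm(w_0+w_+)=0$, while the decomposition of $w_-$ is $0+0+w_-$, so $\Gp w_-=0$. Hence $w\in\ker\Gm+\ker\Gp$.

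The boundary identity \eqref{eq:3.2} is the main work. I would rely on two facts: (i) $\overline{A_0}$ is skew-symmetric (since $A_0$ is), and (ii) for $u_0\in\dom(\overline{A_0})$ and any $v\in\dom(A)$ one has $\langle\overline{A_0}u_0,v\rangle_V+\langle u_0,Av\rangle_V=0$, obtained by approximating $u_0$ in the graph norm by elements of $\dom(A_0)$ and using $A=(-A_0)^*$. Writing $Au=\overline{A_0}u_0+u_+-u_-$ and $Aw=\overline{A_0}w_0+w_+-w_-$, I would expand $\langle Au,w\rangle_V+\langle u,Aw\rangle_V$ into its nine plus nine terms. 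Applying (ii) to the pairs $(u_0,w)$ and $(w_0,u)$ removes every contribution containing $\overline{A_0}$ (the purely $\overline{A_0}$ piece also vanishes by skew-symmetry), and a brief bookkeeping shows that the cross terms involving $u_0$ or $w_0$, as well as the mixed-sign terms $\langle u_+,w_-\rangle_V$ and $\langle u_-,w_+\rangle_V$, cancel in pairs. What survives is $2\langle u_+,w_+\rangle_V-2\langle u_-,w_-\rangle_V$, which, with the chosen rescaled inner products, is exactly $\langle\Gp u,\Gp w\rangle_{\Hp}-\langle\Gm u,\Gm w\rangle_{\Hm}$.

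The main obstacle is the bookkeeping in this last step: one must organise the expansion so that the cancellations coming from (i) and (ii) are transparent, and one must notice that the factor $2$ appearing naturally in the computation is not an error but must be absorbed into the definition of the inner products on $\Hp$ and $\Hm$. Once this is done, the three defining conditions are pure consequences of \eqref{eq:3.5} and the adjoint relation.
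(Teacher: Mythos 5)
Your proof is correct and follows essentially the same route as the paper: one computes the boundary form as $b(u,w)=2\langle u_+,w_+\rangle_V-2\langle u_-,w_-\rangle_V$ using the adjoint identity $\langle \overline{A_0}u_0,v\rangle_V+\langle u_0,Av\rangle_V=0$, and reads off $\ker\Gm+\ker\Gp=\dom(A)$ and the surjectivity directly from the decomposition \eqref{eq:3.5}. Your explicit rescaling of the inner products on $\Hp$ and $\Hm$ by the factor $2$ addresses a point the paper leaves implicit, and it is indeed needed for \eqref{eq:3.2} to hold literally with $\Gp w=w_+$ and $\Gm w=w_-$.
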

\begin{proof}
a) Let $u,v\in\dom(A)$. Then 
\[   b(u,v)=2\langle u_+,v_+\rangle_V-2\langle u_{\mbox{\tiny{-}}},v_{\mbox{\tiny{-}}}\rangle_V \]
with the notation defined above. In fact, using that, 
\[   \langle \overline{A_0}u_0,w\rangle_V=\langle -u_0,Aw\rangle_V\]
for all $w\in\dom(A)$, this is straightforward.\\
b) $\ker \Gp   +\ker \Gm  =\{   w\in\dom(A):w_+=0\}+\{w\in\dom(A):w_{\mbox{\tiny{-}}}=0\}=\dom(A)$. Thus \eqref{eq:3.2} and \eqref{eq:3.3} are satisfied.  
\end{proof}
Let $(\Hm,\Hp,\Gm  ,\Gp  )$ be a  boundary quadruple, which is fixed for the remainder of this section. We will decribe all $m$-dissipative extensions of $A_0$ in terms of this quadruple.  

Recall that $A_0$ is closable and that $\overline{A_0}$ is skew-symmetric again. The domain of $\overline{A_0}$ may be described by the  boundary quadruple as follows. 
\begin{prop}\label{prop:3.8}
One has $\dom (\overline{A_0})=\ker \Gp  \cap \ker \Gm  $ and $\overline{A_0}w=Aw$ for all $w\in \dom(\overline{A_0})$. 
\end{prop}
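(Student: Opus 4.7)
The plan is to prove the two set-theoretic inclusions separately and to check that $\overline{A_0}$ and $A$ agree on the common set along the way. The key identities I would use throughout are \eqref{eq:3.2}, the continuity of $\Gm,\Gp$ in the graph norm (Lemma~\ref{lem:3.4}), the interpolation property (Lemma~\ref{lem:3.2}), and the general fact that for a closable densely defined operator $T$, one has $T^{**}=\overline T$. Applied to $T=-A_0$, the latter gives $A^\ast=(-A_0)^{\ast\ast}=-\overline{A_0}$, so that $\dom(\overline{A_0})=\dom(A^\ast)$ and $\overline{A_0}u=-A^\ast u$.

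For the inclusion $\dom(\overline{A_0})\subset \ker \Gm\cap\ker\Gp$, I would begin at the level of $\dom(A_0)$. From $A=(-A_0)^\ast$ one directly gets $\langle A_0u,v\rangle_V+\langle u,Av\rangle_V=0$ for all $u\in\dom(A_0)$ and $v\in\dom(A)$; together with \eqref{eq:3.2} this rewrites as
\[
\langle \Gp u,\Gp v\rangle_{\Hp}=\langle \Gm u,\Gm v\rangle_{\Hm}\quad\text{for all }u\in\dom(A_0),\ v\in\dom(A).
\]
Fixing $u\in\dom(A_0)$ and using Lemma~\ref{lem:3.2} to pick $v$ with $(\Gm v,\Gp v)=(0,\Gp u)$ forces $\|\Gp u\|_{\Hp}^2=0$, and symmetrically $\|\Gm u\|_{\Hm}^2=0$; hence $\dom(A_0)\subset\ker\Gm\cap\ker\Gp$. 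Since $\Gm,\Gp$ are continuous on $(\dom(A),\|\cdot\|_{\dom(A)})$, this common kernel is closed in $\dom(A)$, and since $A\supset A_0$ is closed, $\dom(\overline{A_0})$ is precisely the closure of $\dom(A_0)$ in the graph norm. The inclusion follows; and $\overline{A_0}w=Aw$ for such $w$ since $\overline{A_0}\subset A$.

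For the reverse inclusion, let $w\in\ker\Gm\cap\ker\Gp$. Then \eqref{eq:3.2} collapses to
\[
\langle Aw,v\rangle_V+\langle w,Av\rangle_V=0\quad\text{for every }v\in\dom(A),
\]
which rewrites as $\langle Av,w\rangle_V=\langle v,-Aw\rangle_V$ for all $v\in\dom(A)$. By definition of the adjoint this says $w\in\dom(A^\ast)$ with $A^\ast w=-Aw$; via $A^\ast=-\overline{A_0}$ noted above, $w\in\dom(\overline{A_0})$ with $\overline{A_0}w=Aw$. The only nontrivial technical point is verifying the two background facts used as inputs, namely that $\dom(\overline{A_0})$ coincides with the graph-norm closure of $\dom(A_0)$ inside $\dom(A)$ and that $(-A_0)^{\ast\ast}=-\overline{A_0}$; both are standard and follow from $A_0\subset A$ together with closedness of $A$, but they are what makes the short argument go through.
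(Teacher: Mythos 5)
Your proof is correct and follows essentially the same route as the paper: both directions use the interpolation property of Lemma~\ref{lem:3.2} together with the continuity of $\Gm,\Gp$ for the forward inclusion, and the identification $A^\ast=(-A_0)^{\ast\ast}=-\overline{A_0}$ for the reverse one (the paper cites this last fact from the literature rather than spelling it out). The only cosmetic difference is that you invoke interpolation twice with $(0,\Gp u)$ and $(\Gm u,0)$, where the paper uses it once with $(-\Gm u,\Gp u)$.
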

\begin{proof}
$"\subset"$ Let $u\in\dom(A_0)$. Then for all $v\in\dom(A)$, 
\[0=\langle \Gp u,\Gp v\rangle_{\Hp}-\langle \Gm u,\Gm v\rangle_{\Hm} .\]
By Lemma~\ref{lem:3.2} there exists $v\in\dom(A)$ such that $\Gp u=\Gp v$ and $\Gm v=-\Gm u$. Thus 
\[   0=\|\Gp u\|_{\Hp}^2 +\|\Gm u\|_{\Hm}^2.\]
Hence $u\in\ker \Gp\cap \ker \Gm$. 
Since $\Gp  $ and $\Gm  $ are continuous for the graph norm on $\dom(A)$, it follows that 
\[   \dom(\overline{A_0})\subset \ker \Gp  \cap \ker \Gm   .\] 
$"\supset"$ Let $w\in \ker \Gm  \cap \ker \Gp  $. Then 
\[ \langle Aw,u\rangle_V+\langle w,Au\rangle_V=\langle \Gp   w,\Gp  u\rangle_{\Hp} -\langle \Gm   w,\Gm  u\rangle_{\Hm} =0  \]
for all $u\in\dom(A)$. Hence 
\[  \langle w,A_0^*u\rangle_V=-\langle w,Au\rangle_V=\langle Aw,u\rangle_V \]
for all $u\in \dom(A_0^*)$. By \cite[Proposition B10, p. 472]{ABHN11} this implies that $w\in \dom(\overline{A_0}) $ and $\overline{A_0}w=Aw$. 
\end{proof}
Now we can describe all $m$-dissipative extensions of $A_0$. Recall that $\Hp  :=\ran \Gp  $ and $\Hm =\ran \Gm   $ are complete.
 If $\Phi\in{\mathcal L}(\Hm ,\Hp  )$ is a contraction, then we define the operator $A_\Phi$ on $V$ by 
\[ \dom (A_\Phi):=\{ w\in \dom(A):\Phi \Gm w = \Gp   w \}\mbox{ and }A_\Phi w:=Aw.\]
Then clearly $A_0\subset A_\Phi\subset A$. 
\begin{thm}\label{th:3.9}
Let $B$ be an operator on $V$ such that $A_0\subset B$. The following assertions are equivalent:
\begin{itemize}
	\item[(i)] $B$ is $m$-dissipative; 
	\item[(ii)] there exists a linear contraction $\Phi:\Hm \to \Hp  $ such that $B=A_\Phi$.  
\end{itemize}
\end{thm}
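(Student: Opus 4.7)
My plan is to handle the two implications separately, with the main obstacle being the range condition in direction $(ii)\Rightarrow(i)$.

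For $(ii)\Rightarrow(i)$, I would first verify that $A_\Phi$ is dissipative, closed, and densely defined. Dissipativity is immediate from the boundary identity \eqref{eq:3.2} and $\|\Phi\|\leq 1$: for $w\in\dom(A_\Phi)$ one has $2\re\langle Aw,w\rangle_V=\|\Phi\Gm w\|_{\Hp}^2-\|\Gm w\|_{\Hm}^2\leq 0$. Closedness follows because $\Phi\Gm-\Gp$ is continuous on $\dom(A)$ in the graph norm by Lemma~\ref{lem:3.4}, so $\dom(A_\Phi)$ is closed in $\dom(A)$; since $A$ itself is closed as an adjoint, so is $A_\Phi$. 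Density of $\dom(A_\Phi)$ holds because $\dom(\overline{A_0})=\ker\Gm\cap\ker\Gp\subset\dom(A_\Phi)$ by Proposition~\ref{prop:3.8}, and $\dom(A_0)$ is dense in $V$.

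The heart of the proof is $(\Id-A_\Phi)\dom(A_\Phi)=V$. Since this range is closed by Proposition~\ref{prop:standard}(5), it suffices to show it is dense. I would suppose $f\in V$ is orthogonal to this range. Testing against $\dom(A_0)\subset\dom(A_\Phi)$ yields $f\in\dom(A_0^*)=\dom(A)$ with $A_0^*f=f$, and since $A_0^*=-A$ this means $f\in\ker(\Id+A)$. Testing against a general $w\in\dom(A_\Phi)$ and applying \eqref{eq:3.2} to $(w,f)$, together with $Af=-f$ and $\Gp w=\Phi\Gm w$, gives $\langle\Gm w,\Phi^*\Gp f-\Gm f\rangle_{\Hm}=0$ for all $w\in\dom(A_\Phi)$. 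By Lemma~\ref{lem:3.2}, for any $\xi\in\Hm$ there exists $w\in\dom(A)$ with $\Gm w=\xi$ and $\Gp w=\Phi\xi$, and such a $w$ automatically lies in $\dom(A_\Phi)$; hence $\Gm\dom(A_\Phi)=\Hm$, so $\Gm f=\Phi^*\Gp f$ and thus $\|\Gm f\|_{\Hm}\leq\|\Gp f\|_{\Hp}$. Combined with the identity $\|\Gm f\|_{\Hm}^2-\|\Gp f\|_{\Hp}^2=2\|f\|_V^2$ obtained from \eqref{eq:3.2} at $u=v=f$, this forces $f=0$. This orthogonality argument is the step I expect to be the most delicate.

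For $(i)\Rightarrow(ii)$: by Theorem~\ref{th:2.3}, $B\subset A$. I would define $\Phi_0:\Gm\dom(B)\to\Hp$ by $\Phi_0(\Gm w):=\Gp w$. Applying \eqref{eq:3.2} at $u=v=w\in\dom(B)$ together with the dissipativity of $B$ gives $\|\Gp w\|_{\Hp}^2\leq\|\Gm w\|_{\Hm}^2$. Taking $w$ with $\Gm w=0$ shows $\Gp w=0$, so $\Phi_0$ is well defined; the same estimate shows it is a contraction. Since $\Hp$ is complete by Proposition~\ref{prop:3.5}, I would extend $\Phi_0$ by continuity to the closure of $\Gm\dom(B)$ in $\Hm$, then extend to a contraction $\Phi:\Hm\to\Hp$ by composing with the orthogonal projection onto that closure. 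By construction $B\subset A_\Phi$, and $A_\Phi$ is $m$-dissipative by the first direction, so the maximality of $B$ among dissipative operators (Proposition~\ref{prop:standard}(3)) forces $B=A_\Phi$.
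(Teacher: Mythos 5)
Your proof is correct, but for the harder implication $(ii)\Rightarrow(i)$ you take a genuinely different route from the paper. The paper never verifies the range condition $(\Id-A_\Phi)\dom(A_\Phi)=V$ directly: it first shows (as you do) that every dissipative extension $B$ of $A_0$ satisfies $B\subset A_\Phi$ for some contraction $\Phi$, then uses the injectivity of $\Phi\mapsto A_\Phi$ (Proposition~\ref{prop:3.10}) to conclude that each $A_\Phi$ is \emph{maximal} dissipative (any dissipative extension of $A_\Phi$ sits inside some $A_{\Phi'}$, and then $\Phi=\Phi'$), and finally invokes Phillips' Theorem~\ref{th:2.1n} to upgrade maximal dissipativity to $m$-dissipativity. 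You instead prove surjectivity of $\Id-A_\Phi$ by hand: closed range via Proposition~\ref{prop:standard}(5), then an orthogonality argument placing any $f\perp\ran(\Id-A_\Phi)$ in $\ker(\Id+A)$, deriving $\Gm f=\Phi^*\Gp f$ from the boundary identity and the fact that $\Gm\dom(A_\Phi)=\Hm$ (Lemma~\ref{lem:3.2}), and playing $\|\Gm f\|\le\|\Gp f\|$ against $\|\Gm f\|^2-\|\Gp f\|^2=2\|f\|^2$ to force $f=0$; all these steps check out (note you do need the completeness of $\Hm,\Hp$ from Proposition~\ref{prop:3.5} to form $\Phi^*$, which is available). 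Your argument is self-contained and makes no use of Phillips' theorem or Proposition~\ref{prop:3.10} in this direction — in effect you re-run, in the concrete setting of the quadruple, the same kind of orthogonal-complement computation the paper uses once and for all in its proof of Theorem~\ref{th:2.1n}; the paper's approach buys brevity by outsourcing that work to the abstract result. Your $(i)\Rightarrow(ii)$ direction coincides with the paper's.
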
   
 To say that $\Phi:\Hm\to\Hp$ is a  \emph{contraction}   means, by definition, that
 $\|\Phi x\|_{\Hp}\leq \|x\|_{\Hm}$ for all $x\in \Hm $. Since for $w\in \dom(A_\Phi)$, one has 
 \begin{eqnarray*}
 2\re \langle A_\Phi w,w\rangle_V	& = & b(w,w)\\
  & = & \|\Gp  w\|_{\Hp}^2-\|\Gm  w\|_{\Hm}^2\\
   & = & \|\Phi \Gm  w\|_{\Hp}^2-\|\Gm  w\|_{\Hm}^2\leq 0,
 \end{eqnarray*}	 
each operator $A_\Phi$ is dissipative. Before proving Theorem~\ref{th:3.9} we show that $A_\Phi$ determines $\Phi$. More precisely the following holds. 
\begin{prop}\label{prop:3.10}
Let $\Phi_1,\Phi_2:\Hm \to \Hp$ be two contractions. If $A_{\Phi_1}\subset A_{\Phi_2}$, then $\Phi_1=\Phi_2$. 
\end{prop}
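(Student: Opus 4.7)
The plan is to exploit the interpolation property from Lemma~\ref{lem:3.2}, which is the only tool one really needs here. The goal is to show that $\Phi_1 x = \Phi_2 x$ for every $x \in \Hm$, so the natural strategy is to fabricate, for each $x \in \Hm$, an element $w \in \dom(A)$ lying in $\dom(A_{\Phi_1})$ whose $\Gm$-trace is $x$.

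First I would fix an arbitrary $x \in \Hm$. Since $\Phi_1: \Hm \to \Hp$, the vector $\Phi_1 x$ lies in $\Hp = \Gp \dom(A)$, while $x$ lies in $\Hm = \Gm \dom(A)$ by \eqref{eq:3.4n}. Invoking Lemma~\ref{lem:3.2} (the interpolation property that follows from \eqref{eq:3.3}) I obtain a single $w \in \dom(A)$ with
\[
\Gm w = x \quad \text{and} \quad \Gp w = \Phi_1 x.
\]
In particular $\Phi_1 \Gm w = \Gp w$, so $w \in \dom(A_{\Phi_1})$.

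Next I would use the hypothesis $A_{\Phi_1} \subset A_{\Phi_2}$: it gives $\dom(A_{\Phi_1}) \subset \dom(A_{\Phi_2})$, hence $w \in \dom(A_{\Phi_2})$, which by definition means $\Phi_2 \Gm w = \Gp w$. Substituting the values of $\Gm w$ and $\Gp w$ chosen above yields $\Phi_2 x = \Phi_1 x$. Since $x \in \Hm$ was arbitrary, $\Phi_1 = \Phi_2$.

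There is no real obstacle: the whole argument is really just a routine application of the interpolation lemma, and it does not even use the contractivity of $\Phi_1, \Phi_2$ — the statement holds for any two linear maps $\Hm \to \Hp$ inducing the same extension. The only subtle point to flag is that the existence of $w$ with the two prescribed traces \emph{simultaneously} requires \eqref{eq:3.3}; without it one could only prescribe one trace at a time and the argument would collapse.
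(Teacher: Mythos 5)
Your proof is correct and is essentially identical to the paper's: both fix $x\in\Hm$, use Lemma~\ref{lem:3.2} to produce $w\in\dom(A)$ with $\Gm w=x$ and $\Gp w=\Phi_1 x$, and then read off $\Phi_2 x=\Gp w=\Phi_1 x$ from $w\in\dom(A_{\Phi_2})$. Your closing observation that contractivity is never used is accurate and a nice bonus, but otherwise there is nothing to add.
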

\begin{proof}
	Let $x\in \Hm $. By Lemma~\ref{lem:3.2} there exists $w\in \dom(A)$ such that $\Gm  w=x$, $\Gp   w=\Phi_1(x)$. Thus $w\in \dom(A_{\Phi_1})$. It follows from the hypothesis that $w\in\dom(A_{\Phi_2})$, i.e. 
	\[  \Phi_2(x)=\Phi_2(\Gm  w)=\Gp  (w)=\Phi_1(x).  \] 
\end{proof}
\begin{proof}[Proof of Theorem~\ref{th:3.9}]
Let $B$ be a dissipative extension of $A_0$. Then by Theorem~\ref{th:2.3}, $B\subset A$. Since 
\[ 0\geq 2\re \langle Bu,u\rangle_V=b(u,u)=\|\Gp  u\|_{\Hp}^2-\|\Gm  u\|_{\Hm}^2, \]
one has $\|\Gp  u\|_{\Hp}^2\leq \|\Gm  u\|_{\Hm}^2$ for all $u\in\dom(B)$. Thus $\Phi \Gm  u:=\Gp  u$ defines a contraction from $\Gm  \dom (B)$ to $\Hp  $. Then $\phi$ has a contractive extension from $\Hm \to \Hp  $ which we still denote by $\Phi$ (extend first $\Phi$ to the closure of $\Gm  \dom(B)$ by continuity and then by $0$ on $(\Gm  \dom(B))^\perp$). From our definition it follows that $B\subset A_\Phi$. Proposition~\ref{prop:3.10}  now shows that the maximal dissipative operators extending $A_0$ are exactly the operators $A_\Phi$ where $\Phi:\Hm \to \Hp  $ is a linear contraction. Now the claim follows  from Phillips' Theorem (Proposition~\ref{prop:2.2}). 	
\end{proof}	
\begin{cor}\label{cor:3.11}
The mapping $\Phi\mapsto A_\Phi$ is a bijection between the set of all contractions $\Phi:\Hm \to \Hp  $ and the set of all $m$-dissipative extensions of $A_0$.   
\end{cor}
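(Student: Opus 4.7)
The plan is to derive the corollary as a direct repackaging of Theorem~\ref{th:3.9} together with Proposition~\ref{prop:3.10}; no new argument is really needed. I would split the statement into three parts: (a) the assignment $\Phi \mapsto A_\Phi$ is well defined, i.e.\ $A_\Phi$ is indeed $m$-dissipative for any contraction $\Phi:\Hm\to\Hp$; (b) the assignment is surjective onto the $m$-dissipative extensions of $A_0$; (c) the assignment is injective.

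For (a), dissipativity of $A_\Phi$ is already recorded in the paragraph preceding Proposition~\ref{prop:3.10}: for $w\in\dom(A_\Phi)$ one has $2\re\langle A_\Phi w,w\rangle_V = \|\Phi\Gm w\|_{\Hp}^2 - \|\Gm w\|_{\Hm}^2\le 0$ since $\Phi$ is a contraction. To upgrade this to $m$-dissipativity, I would invoke the implication (ii)$\Rightarrow$(i) of Theorem~\ref{th:3.9}: the theorem asserts precisely that any $B$ of the form $A_\Phi$ with $\Phi$ a contraction is $m$-dissipative (and clearly $A_0\subset A_\Phi$ by construction).

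For (b), let $B$ be any $m$-dissipative extension of $A_0$. The implication (i)$\Rightarrow$(ii) of Theorem~\ref{th:3.9} produces a contraction $\Phi:\Hm\to\Hp$ with $B=A_\Phi$. This settles surjectivity. For (c), suppose $\Phi_1,\Phi_2:\Hm\to\Hp$ are contractions with $A_{\Phi_1}=A_{\Phi_2}$. Then in particular $A_{\Phi_1}\subset A_{\Phi_2}$, so Proposition~\ref{prop:3.10} yields $\Phi_1=\Phi_2$.

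There is no real obstacle here; the work has already been done upstream. The one point to be slightly careful about is that Theorem~\ref{th:3.9} is stated for operators $B$ with $A_0\subset B$, so when applying it in (a) one should note the obvious inclusion $A_0\subset A_\Phi$ (which holds because $A_0\subset A$ and every $u\in\dom(A_0)\subset\dom(\overline{A_0})=\ker\Gp\cap\ker\Gm$ automatically satisfies $\Phi\Gm u=0=\Gp u$, by Proposition~\ref{prop:3.8}). Once this is observed, Theorem~\ref{th:3.9} and Proposition~\ref{prop:3.10} together give the bijection immediately.
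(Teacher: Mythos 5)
Your proposal is correct and follows exactly the paper's route: the corollary is obtained by combining Theorem~\ref{th:3.9} (well-definedness and surjectivity) with Proposition~\ref{prop:3.10} (injectivity), and your extra remark that $A_0\subset A_\Phi$ is the same observation the paper makes immediately after defining $A_\Phi$. No gaps.
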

\begin{proof}
This follows from  Theorem~\ref{th:3.9} and Proposition~\ref{prop:3.10}. 
\end{proof}
Let us consider some special cases. 
\begin{prop}\label{prop:3.12}
	The following assertions are equivalent:
	\begin{itemize}
		\item[(i)] $\langle Au,v\rangle_V +\langle u,Av\rangle_V=0$  for all $u,v\in\dom(A)$; 
		\item[(ii)] $\Gm  =\Gp  =0$;
		\item[(iii)] $\overline{A_0}=A$;
		\item[(iv)] $-A^*=A$. 
	\end{itemize}
\end{prop}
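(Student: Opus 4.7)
The plan is to close a loop $(i) \Rightarrow (ii) \Rightarrow (iii) \Rightarrow (i)$ and then handle $(iii) \Leftrightarrow (iv)$ by a standard adjoint argument. All the raw ingredients are already in place: the boundary identity \eqref{eq:3.2}, the interpolation property (Lemma~\ref{lem:3.2}), the characterisation of $\dom(\overline{A_0})$ (Proposition~\ref{prop:3.8}), and the fact that $A = (-A_0)^*$.

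For $(i) \Rightarrow (ii)$, I would substitute $(i)$ into \eqref{eq:3.2} to obtain $\langle \Gp u,\Gp v\rangle_{\Hp} = \langle \Gm u,\Gm v\rangle_{\Hm}$ for all $u,v \in \dom(A)$. Given arbitrary $\xp, \xp' \in \Hp$, use Lemma~\ref{lem:3.2} to pick $u$ with $\Gm u = 0$, $\Gp u = \xp$ and $v$ with $\Gm v = 0$, $\Gp v = \xp'$; the identity collapses to $\langle \xp,\xp'\rangle_{\Hp} = 0$, forcing $\Hp = \{0\}$ and hence $\Gp = 0$. Symmetrically, choosing $u, v$ with $\Gp u = \Gp v = 0$ and arbitrary negative data gives $\Hm = \{0\}$, so $\Gm = 0$. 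The reverse $(ii) \Rightarrow (i)$ is then immediate from \eqref{eq:3.2}.

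For $(ii) \Rightarrow (iii)$, Proposition~\ref{prop:3.8} gives $\dom(\overline{A_0}) = \ker\Gm \cap \ker\Gp$; if both maps are zero this intersection is all of $\dom(A)$, and since $\overline{A_0}w = Aw$ on this domain we conclude $\overline{A_0} = A$. For $(iii) \Rightarrow (i)$, I would argue that the closure of a skew-symmetric operator is skew-symmetric: for $u,v \in \dom(\overline{A_0})$, pick $u_n, v_n \in \dom(A_0)$ with $(u_n, A_0 u_n) \to (u,\overline{A_0}u)$ and similarly for $v$, and pass to the limit in $\langle A_0 u_n, v_n\rangle + \langle u_n, A_0 v_n\rangle = 0$. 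Thus $A = \overline{A_0}$ satisfies $(i)$.

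Finally, $(iii) \Leftrightarrow (iv)$ follows from the double adjoint formula. Since $A_0$ is densely defined and closable (being skew-symmetric implies dissipative, hence Proposition~\ref{prop:standard}(4)), we have $(-A_0)^{**} = \overline{-A_0} = -\overline{A_0}$. Therefore $A^* = ((-A_0)^*)^* = -\overline{A_0}$, so $-A^* = A$ is equivalent to $\overline{A_0} = A$. The main thing to watch carefully is the use of the interpolation lemma in $(i) \Rightarrow (ii)$, where one must exploit the freedom to set one of the two boundary components to zero while the other ranges over all of $\Hp$ or $\Hm$; everything else is a direct reading of the preceding results.
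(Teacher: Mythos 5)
Your proof is correct, but it closes the equivalences along a different cycle than the paper. The paper proves $(ii)\Rightarrow(i)\Rightarrow(iv)\Rightarrow(iii)\Rightarrow(ii)$: the step $(i)\Rightarrow(iv)$ is pure operator theory — $(i)$ says exactly that $A$ is skew-symmetric, i.e.\ $-A\subset A^*$, while one always has $A^*=-A_0^{**}=-\overline{A_0}\subset -A$, so equality holds; then $(iv)\Rightarrow(iii)$ is the same double-adjoint identity read backwards, and only the last step $(iii)\Rightarrow(ii)$ invokes Proposition~\ref{prop:3.8}. You instead prove $(i)\Rightarrow(ii)$ head-on via the interpolation lemma (choosing $u$ with $\Gm u=0$ and $\Gp u$ arbitrary to force $\Hp=\{0\}$), then $(ii)\Rightarrow(iii)$ from Proposition~\ref{prop:3.8}, then $(iii)\Rightarrow(i)$ by observing that the closure of a skew-symmetric operator is skew-symmetric, and finally append $(iii)\Leftrightarrow(iv)$ via $A^*=-\overline{A_0}$. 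Every step checks out: the interpolation argument is legitimate (Lemma~\ref{lem:3.2} with $\xm=0$ gives all of $\Hp$ in the range of $\Gp$ restricted to $\ker\Gm$), and the double-adjoint formula applies because $A_0$ is densely defined and closable. What your route buys is a direct, boundary-quadruple-flavoured proof that $(i)$ kills both trace maps, which makes the role of condition \eqref{eq:3.3} visible; what the paper's route buys is that the passage from $(i)$ to $(iv)$ needs no boundary structure at all and is a one-line adjoint computation. Either organization is acceptable.
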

\begin{proof}
$(ii)\Rightarrow (i)$ This follows from \eqref{eq:3.2}. \\
$(i)\Rightarrow (iv)$ Property $(i)$ implies that $-A\subset A^*$.  But $A^*=-A_0^{**}=-\overline{A_0}\subset -A$. This proves $(iv)$. \\
$(iv)\Rightarrow (iii)$ One always has $\overline{A_0}=A_0^{**}$ and $(\overline{A_0})^*=A_0^*$. Thus $(iv)$ implies that $\overline{A_0}=A_0^{**}=-A^*=A$. \\
$(iii)\Rightarrow (ii)$ This follows from Proposition~\ref{prop:3.8}.  
\end{proof}
Proposition~\ref{prop:3.12} describes when both operators $\Gp  ,\Gm  $ are zero; it turns out that this is independent of the  boundary quadruple. Next we consider the case when $\Gp  =0$.  
\begin{prop}\label{prop:3.13}
Assume that $\Gp  =0$. Then $A$ is the only $m$-dissipative extension of $A_0$. However, if $\Gm  \neq 0$, then there exists an infinite number of generators $B$ of a $C_0$-semigroup such that $A_0\subset B$.  
\end{prop}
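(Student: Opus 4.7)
For the first claim, observe that $\Gp=0$ forces $\Hp=\Gp D(A)=\{0\}$; the only contraction $\Phi:\Hm\to\{0\}$ is therefore $\Phi=0$, whose associated operator in Theorem~\ref{th:3.9} has domain $\{w\in D(A):\Phi\Gm w=\Gp w\}=\{w\in D(A):0=0\}=D(A)$, so $A_\Phi=A$. By Corollary~\ref{cor:3.11}, $A$ is then the unique $m$-dissipative extension of $A_0$.

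For the second claim, the plan is to exhibit an uncountable family $\{B_\mu\}_{\mu\in\R}$ of pairwise distinct generators of $C_0$-semigroups extending $A_0$, constructed as rank-one $A$-bounded perturbations of $A$ that vanish on $D(A_0)$. Since $\Gm\neq 0$, I pick $x_0\in\Hm$ with $\|x_0\|_{\Hm}=1$ and any $\xi\in V\setminus\{0\}$. The linear form $\ell(u):=\langle\Gm u,x_0\rangle_{\Hm}$ is continuous on $(D(A),\|\cdot\|_{D(A)})$ by Lemma~\ref{lem:3.4}, and by Proposition~\ref{prop:3.8} (combined with $\Gp=0$) it vanishes on $D(\overline{A_0})=\ker\Gm\supset D(A_0)$. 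Setting
\[
B_\mu u := Au+\mu\,\ell(u)\,\xi,\qquad D(B_\mu):=D(A),
\]
one has $A_0\subset B_\mu$, and the $B_\mu$ are pairwise distinct since $\ell$ and $\xi$ are nonzero.

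To establish that each $B_\mu$ generates a $C_0$-semigroup, I will show that $B_\mu-\omega I$ is $m$-dissipative for a suitable $\omega=\omega(\mu)\geq 0$; generation then follows from Theorem~\ref{th:2.1n}. From \eqref{eq:3.2} with $\Gp=0$ one has $2\re\langle Au,u\rangle_V=-\|\Gm u\|_{\Hm}^2$; bounding the cross term via Cauchy--Schwarz ($|\ell(u)|\leq\|\Gm u\|_{\Hm}$) and Young's inequality gives $\re\langle B_\mu u,u\rangle_V\leq\tfrac12|\mu|^2\|\xi\|_V^2\|u\|_V^2$, so $B_\mu-\omega I$ is dissipative as soon as $\omega\geq |\mu|^2\|\xi\|_V^2/2$. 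For the range condition, the rank-one structure yields an explicit Sherman--Morrison-style resolvent: writing $R_\lambda:=(\lambda-A)^{-1}$, the equation $(\omega+1-B_\mu)u=f$ is equivalent to $u=R_{\omega+1}f+\mu\ell(u)R_{\omega+1}\xi$, and applying $\ell$ reduces it to the scalar identity $\ell(u)\bigl(1-\mu\ell(R_{\omega+1}\xi)\bigr)=\ell(R_{\omega+1}f)$, uniquely solvable whenever $\mu\ell(R_{\omega+1}\xi)\neq 1$. Since $A$ is $m$-dissipative (by the first claim), Hille--Yosida gives $\lambda R_\lambda\xi\to\xi$ in $V$, hence $AR_\lambda\xi=\lambda R_\lambda\xi-\xi\to 0$ and $R_\lambda\xi\to 0$ in graph norm; continuity of $\Gm$ (Lemma~\ref{lem:3.4}) then forces $\ell(R_{\omega+1}\xi)\to 0$ as $\omega\to\infty$. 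Taking $\omega$ larger than both thresholds yields the $m$-dissipativity. The main technical point is coordinating dissipativity and the range condition with a single $\omega$, which is harmless because the first threshold is $\omega$-independent while the second is satisfied for all sufficiently large $\omega$.
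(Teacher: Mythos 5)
Your proof is correct, but it takes a genuinely different route from the paper's in both halves, most substantially in the second. For uniqueness, the paper does not go through the parametrization: it checks directly from \eqref{eq:3.2} that $2\re\langle Au,u\rangle_V=-\|\Gm u\|_{\Hm}^2\le 0$, so $A$ itself is dissipative, and then Theorem~\ref{th:2.3} (every dissipative extension of $A_0$ is a restriction of $A$) plus Phillips' Theorem makes $A$ the unique $m$-dissipative extension; your route via $\Hp=\{0\}$, $\Phi=0$ and Corollary~\ref{cor:3.11} is equally valid and arguably more in the spirit of Section~\ref{sec:3}. For the second claim the paper is much terser: it observes $\dom(\overline{A_0})=\ker\Gm\neq\dom(A)$ and simply cites \cite[A-II, Theorem 1.33]{Na} for the existence of infinitely many generating extensions. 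You replace that citation by an explicit rank-one $A$-bounded perturbation $B_\mu u=Au+\mu\langle\Gm u,x_0\rangle_{\Hm}\,\xi$, and your verification holds up: $\ell$ vanishes on $\dom(\overline{A_0})=\ker\Gm\supset\dom(A_0)$ by Proposition~\ref{prop:3.8}, the quasi-dissipativity bound follows from $2\re\langle Au,u\rangle_V=-\|\Gm u\|_{\Hm}^2$ with Cauchy--Schwarz and Young, and the range condition follows from the Sherman--Morrison reduction once $\ell(R_{\omega+1}\xi)\to 0$, which you correctly extract from $\lambda R_\lambda\xi\to\xi$ and the graph-norm continuity of $\Gm$ (Lemma~\ref{lem:3.4}). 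Your approach buys self-containedness (no external reference) at the cost of length. Two minor points: the step from ``$B_\mu-\omega$ is $m$-dissipative'' to ``$B_\mu$ generates a $C_0$-semigroup'' is the Lumer--Phillips theorem quoted after Theorem~\ref{th:2.1n}, not Theorem~\ref{th:2.1n} itself; and the pairwise distinctness of the $B_\mu$ uses that some $u$ has $\ell(u)\neq 0$, which holds because $x_0\in\Hm=\Gm\dom(A)$ --- worth stating explicitly.
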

\begin{proof}
One has 
\[   2\re \langle Au,u\rangle_V=\| \Gp   u\|_{\Hp}^2-\|\Gm  u\|_{\Hm}^2=-\|\Gm  u\|_{\Hm}^2\leq 0 \]
for all $u\in\dom(A)$.  Thus $A$ is dissipative. It follows from Theorem~\ref{th:2.3} that $A$ is maximal dissipative  and hence $m$-dissipative by Phillips'The\-o\-rem. If $\Gm  \neq 0$, then $\dom(\overline{A_0})=\ker \Gm  \neq \dom(A)$. 
 Thus it follows 
from \cite[A-II, Theorem 1.33 p. 46]{Na} that $A_0$ has infinitely many extensions generating a $C_0$-semigroup. 
\end{proof}
Thus in the case where $\Gp  =0$ and $\Gm  \neq 0$, only one of the infinitely many $C_0$-semigroups having an extension of $A_0$ as generator is contractive. Next we consider the case when $\Gm  =0$. 
 \begin{prop}\label{prop:3.14}
 If $\Gm  =0$, then $\overline{A_0}$ is the only $m$-dissipative extension  of $A_0$. 
\end{prop}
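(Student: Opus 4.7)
The plan is to deduce this directly from the classification in Theorem~\ref{th:3.9} (equivalently, Corollary~\ref{cor:3.11}) combined with the description of $\dom(\overline{A_0})$ provided by Proposition~\ref{prop:3.8}. There is essentially no obstacle once one observes that the hypothesis $\Gm=0$ collapses the parameter space of admissible contractions to a single point.

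First, I would note that $\Gm=0$ forces $\Hm=\Gm\dom(A)=\{0\}$. Consequently the set of linear contractions $\Phi:\Hm\to\Hp$ reduces to the single zero map $\Phi=0$. By Corollary~\ref{cor:3.11}, $A_0$ admits exactly one $m$-dissipative extension, namely $A_\Phi$ for this choice of $\Phi$.

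Next I would identify $A_\Phi$ explicitly. By definition,
\[
\dom(A_\Phi)=\{w\in\dom(A):\Phi\Gm w=\Gp w\}=\{w\in\dom(A):\Gp w=0\}=\ker\Gp,
\]
and $A_\Phi w=Aw$ on this domain. On the other hand, Proposition~\ref{prop:3.8} gives $\dom(\overline{A_0})=\ker\Gp\cap\ker\Gm$, and since $\Gm=0$ one has $\ker\Gm=\dom(A)$, hence $\dom(\overline{A_0})=\ker\Gp$, with $\overline{A_0}w=Aw$ on $\dom(\overline{A_0})$.

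Combining these two identifications yields $A_\Phi=\overline{A_0}$, so $\overline{A_0}$ is the (unique) $m$-dissipative extension of $A_0$, as claimed. The argument is a purely formal consequence of the parametrization already established; the only conceptual point is recognizing that in the degenerate case $\Hm=\{0\}$ the contraction $\Phi$ is forced, which is immediate.
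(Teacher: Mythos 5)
Your argument is correct and is essentially the same as the paper's: both observe that $\Hm=\{0\}$ forces $\Phi=0$ to be the unique contraction, then identify $\dom(A_\Phi)=\ker\Gp=\ker\Gp\cap\ker\Gm=\dom(\overline{A_0})$ via Proposition~\ref{prop:3.8} and the classification of Theorem~\ref{th:3.9}/Corollary~\ref{cor:3.11}. Nothing to add.
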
   
\begin{proof}
One has $\Hm =\{0\}$. Thus $\Phi=0$ is the only contraction from $\Hm $ to $\Hp  $. Thus 
\begin{eqnarray*}
	\dom(A_\Phi) & = & \{ w\in \dom(A):0=\Gp  w\}\\
	 &  = & \ker \Gp  \\
	 & = & \ker \Gp  \cap \ker \Gm  \\
	  & = & \dom(\overline{A_0}).
\end{eqnarray*}
\end{proof}
As a consequence $\overline{A_0}$ is also the only extension of $A_0$ generating a $C_0$-semigroup. 

We conclude this section by establishing isomorphisms between boundary quadruples. 
\begin{thm}\label{th:3.16}
	Let $(\Hm, \Hp,\Gm,\Gp)$ be a boundary quadruple for $A_0$. Let $\tilde{\Gm}:\dom(A)\to \tilde{\Hm}$ and $\tilde{\Gp}:\dom(A)\to \tilde{\Hp}$ be linear where $\tilde{\Hm}$ and $\tilde{\Hp}$ are Hilbert spaces. The following assertions are equivalent:
	\begin{itemize}
		\item[(i)] $(\tilde{\Hm},\tilde{\Hp},\tilde{\Gm},\tilde{\Gp})$ is a boundary quadruple;
		\item[(ii)] \begin{enumerate}
			\item[a)] $ \tilde{\Gm}\dom(A)=\tilde{\Hm}$ and $\tilde{\Gp}\dom(A)=\tilde{\Hp}$; 
			\item[b)] there exists an isomorphism $\Psi:H\to\tilde{H}$ where 
			$H:=\Hm\oplus \Hp$ and  $ \tilde{H}:=\tilde{\Hm}\oplus \tilde{\Hp}$
			 such that 
		\[   \Psi(\Gm u,\Gp u)=(\tilde{\Gm}u,\tilde{\Gp}u)\mbox{ for all }u\in\dom(A)\]
		and \[  \Psi^*\tilde{C} \Psi=C,\]
		where $C(x_{\mbox{\tiny{-}}},x_+)=(-x_{\mbox{\tiny{-}}},x_+)$ for $x_{\mbox{\tiny{-}}}\in \Hm$, $x_+\in\Hp$ and $\tilde{C} (\tilde{x_{\mbox{\tiny{-}}}},\tilde{x_+})=(-\tilde{x_{\mbox{\tiny{-}}}},\tilde{x_+})$ for 
		$\tilde{x_{\mbox{\tiny{-}}}}\in\tilde{\Hm}$, $\tilde{x_+}\in\tilde{\Hp}$. 
		\end{enumerate} 
	\end{itemize} 
\end{thm}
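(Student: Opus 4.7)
The plan is to encode the two boundary quadruples as single continuous surjections $G:=(\Gm,\Gp):\dom(A)\to H$ and $\tilde G:=(\tilde{\Gm},\tilde{\Gp}):\dom(A)\to \tilde H$, to observe via Proposition~\ref{prop:3.8} that their common kernel is the intrinsic subspace $\dom(\overline{A_0})$, and then to obtain $\Psi$ by composing the two quotient isomorphisms out of $\dom(A)/\dom(\overline{A_0})$. The compatibility $\Psi^*\tilde C\Psi=C$ will drop out of the single identity
\[
b(u,w)=\langle CG(u),G(w)\rangle_H=\langle \tilde C\tilde G(u),\tilde G(w)\rangle_{\tilde H},\qquad u,w\in\dom(A),
\]
which is nothing more than a repackaging of \eqref{eq:3.2} in each quadruple.

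For $(ii)\Rightarrow(i)$, the surjectivity condition \eqref{eq:3.4n} is exactly a). Reading the displayed identity from right to left using $\tilde G=\Psi\circ G$ and $\Psi^*\tilde C\Psi=C$ delivers \eqref{eq:3.2} for the tilde quadruple. For \eqref{eq:3.3} I would invoke Remark~\ref{rem:3.3}: Lemma~\ref{lem:3.2} together with \eqref{eq:3.4n} shows that $G$ is surjective onto $H$, hence $\tilde G=\Psi\circ G$ is surjective onto $\tilde H$ since $\Psi$ is a bijection, and this interpolation property is equivalent to $\ker\tilde{\Gm}+\ker\tilde{\Gp}=\dom(A)$.

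For $(i)\Rightarrow(ii)$, condition a) is part of being a boundary quadruple. By Lemma~\ref{lem:3.4} the maps $G$ and $\tilde G$ are continuous on $\dom(A)$ equipped with its graph norm, and by Proposition~\ref{prop:3.5} both targets are Hilbert spaces. Proposition~\ref{prop:3.8}, applied to each of the two quadruples, gives $\ker G=\dom(\overline{A_0})=\ker\tilde G$, which is closed by continuity of $G$; therefore the induced continuous bijections from the Banach quotient $\dom(A)/\dom(\overline{A_0})$ onto $H$ and onto $\tilde H$ are topological isomorphisms by the Open Mapping Theorem, and composing them yields a topological isomorphism $\Psi:H\to\tilde H$ with $\Psi G=\tilde G$ on $\dom(A)$. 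Applied to $x=G(u)$, $y=G(w)$, the displayed identity then gives $\langle(\Psi^*\tilde C\Psi-C)x,y\rangle_H=0$ for all $u,w\in\dom(A)$, and the surjectivity of $G$ onto $H$ upgrades this equality of sesquilinear forms on $H\times H$ to the operator identity $\Psi^*\tilde C\Psi=C$.

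The main obstacle I anticipate is making $\Psi$ a topological, and not merely algebraic, isomorphism: one must rule out that $\Psi$ or its inverse is unbounded. This is precisely where the completeness furnished by Proposition~\ref{prop:3.5} and the automatic continuity of Lemma~\ref{lem:3.4} become essential, so that the Open Mapping Theorem can be applied to the Banach quotient $\dom(A)/\dom(\overline{A_0})$. Once this topological point is secured, the rest is a routine unpacking of the sesquilinear identity $b=\langle CG,G\rangle_H=\langle \tilde C\tilde G,\tilde G\rangle_{\tilde H}$.
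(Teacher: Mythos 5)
Your proof is correct and follows essentially the same route as the paper's: both identify $\ker G=\ker\tilde G=\dom(\overline{A_0})$ via Proposition~\ref{prop:3.8}, construct $\Psi$ by composing the two induced continuous bijections (you factor through the quotient $\dom(A)/\dom(\overline{A_0})$ and invoke the Open Mapping Theorem, the paper restricts to the orthogonal complement of the kernel in $\dom(A)$ --- the same device), and extract $\Psi^*\tilde C\Psi=C$ from the identity $b(u,w)=\langle CGu,Gw\rangle_H=\langle\tilde C\tilde Gu,\tilde Gw\rangle_{\tilde H}$ together with the surjectivity of $G$. Your spelled-out converse, using $\tilde G=\Psi G$ and Remark~\ref{rem:3.3} to recover \eqref{eq:3.3}, matches what the paper summarizes as ``reversing the arguments.''
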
 
\begin{proof}
$(i)\Rightarrow (ii)$ Assume that $(\tilde{\Hm},\tilde{\Hp},\tilde{\Gm},\tilde{\Gp})$ is a boundary quadruple. Then $a)$  follows from Proposition~\ref{prop:3.5}. To prove $b)$, consider the mapping $G:=(\Gm,\Gp):\dom(A)\to H:=\Hm\oplus \Hp$ given by $G(u)=(\Gm u,\Gp u)$ and similarly $\tilde{G}:=(\tilde{\Gm},\tilde{\Gp}):\dom(A)\to \tilde{H}:=\tilde{\Hm}\oplus \tilde{\Hp}$. By Proposition~\ref{prop:3.8}, $\ker G=\ker \tilde{G}=\dom(\overline{A_0})$.  Thus $G_0:=G_{|\dom(A_0)^\perp}$ and $\tilde{G_0}:={\tilde{G}}_{|\dom(A_0)^\perp}$ define two isomorphisms $G_0:\dom(A_0)^\perp \to H$ and   $\tilde{G_0}:\dom(A_0)^\perp \to \tilde{H}$. Thus $\Psi:=\tilde{G_0}G_0^{-1}\in {\mathcal L}(H,\tilde{H})$ is an isomorphism. Since 
\[  \langle Au,v\rangle_V +\langle u,Av\rangle_V=0\] 
if $u\in \dom(\overline{A_0})$ or $v\in \dom(\overline{A_0})$, it follows that $\tilde{G}=\Psi G$.  Notice that 
\[  \langle Au,v\rangle_V +\langle u,Av\rangle_V=\langle CGu,Gv\rangle_H=\langle \tilde{C} \tilde{G}u,\tilde{G}v\rangle_{\tilde{H}}\]
	for all   $u,v\in\dom(A)$, where $Gu=(\Gm u,\Gp u)\in\Hm\oplus \Hp=H$. Similarly $\tilde{G}u$ is written as a vector. Let $u\in\dom(A)$. Then for all $v\in\dom(A)$, 
	\[  \langle CGu,Gv\rangle_H=\langle \tilde{C} \Psi {G}u,\Psi{G}v\rangle_{\tilde{H}} =\langle \Psi^*\tilde{C} \Psi {G}u,{G}v\rangle_{\tilde{H}}.  \]
	Since $G$ is surjective, it follows that $CGu =\Psi^*\tilde{C}\Psi  G u$ for all $u\in\dom(A)$. Consequently, $C=\Psi^*\tilde{C}\Psi$. \\
$(ii)\Rightarrow (i)$ This is shown by reversing the arguments leading to "$(i)$ implies $(ii)$". 	    
\end{proof}
\section{Unitary groups}\label{sec:4}
Let $V$ be a Hilbert space over $\K=\R$ or $\C$. In this section we want to determine all extensions of a densely defined skew-symmetric operator which generate a unitary group. We recall the following two well-known facts. An operator $B$ on $V$ generates a $C_0$-group $(U(t))_{t\in\R}$ if and only if 
$\pm B$ generate  $C_0$-semigroups $(U_\pm (t))_{t\geq 0}$. In that case, $U(t)=U_+(t)$ for $t\geq 0$ and $U(-t)=U_{-}(t)$ for $t<0$. Moreover, if $B$ generates a $C_0$-semigroup $(S(t))_{t\geq 0}$, then $B^*$ generates the $C_0$-semigroup $(S(t)^*)_{t\geq 0}$. By a \emph{unitary $C_0$-group} we mean a $C_0$-group of unitary operators. The following is well-known.
\begin{prop}\label{prop:4.1}
	Let $B$ be an operator on $V$.  The following assertions are equivalent:
	\begin{itemize}
		\item[(i)] $B$ generates a unitary $C_0$-group;
		\item[(ii)] $\pm B$ is $m$-dissipative;
		\item[(iii)] $\dom(B)$ is dense and $-B=B^*$. 
	\end{itemize}
\end{prop}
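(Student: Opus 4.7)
The plan is to establish the three implications $(i)\Rightarrow(ii)$, $(ii)\Rightarrow(iii)$, and $(iii)\Rightarrow(i)$ (or rather $(iii)\Rightarrow(ii)\Rightarrow(i)$, since $(ii)\Rightarrow(i)$ is the most direct). I will make repeated use of the two ``well-known facts'' recalled just before the statement (that $B$ generates a $C_0$-group iff $\pm B$ each generate a $C_0$-semigroup, and that on a Hilbert space the adjoint semigroup is generated by $B^*$), together with the Lumer--Phillips theorem and Phillips' theorem (Theorem~\ref{th:2.1n}).

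For $(i)\Rightarrow(ii)$: if $B$ generates a unitary $C_0$-group $(U(t))_{t\in\R}$, then both $(U(t))_{t\geq 0}$ and $(U(-t))_{t\geq 0}$ are contractive $C_0$-semigroups with generators $B$ and $-B$ respectively, so by Lumer--Phillips $\pm B$ are $m$-dissipative. Conversely, for $(ii)\Rightarrow(i)$: if $\pm B$ are $m$-dissipative, Lumer--Phillips gives contractive semigroups $U_\pm(t)$ with generators $\pm B$, and the first recalled fact yields a $C_0$-group $U(t)$ extending both. The identity $U(t)U(-t)=\Id$ combined with $\|U(t)\|\leq 1$ and $\|U(-t)\|\leq 1$ forces $\|U(t)u\|=\|u\|$ for all $u$, and surjectivity of $U(t)$ (since $U(-t)$ is its right inverse) then gives unitarity.

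For $(ii)\Rightarrow(iii)$: Phillips' Theorem (Theorem~\ref{th:2.1n}) gives density of $\dom(B)$. Since $\pm B$ are both dissipative, $\re\langle Bu,u\rangle_V=0$ for all $u\in\dom(B)$, so by Proposition~\ref{prop:2.2} $B$ is skew-symmetric, which by the definition of the adjoint translates to $B\subset -B^*$. Now here is the key step: $-B$ is $m$-dissipative, hence generates a contractive semigroup, whose adjoint is again a contractive semigroup by the second recalled fact; its generator is $(-B)^*=-B^*$, so $-B^*$ is itself $m$-dissipative. Since $B\subset -B^*$ with $B$ maximal dissipative (by $m$-dissipativity and Proposition~\ref{prop:standard}.3), equality $B=-B^*$ follows.

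For $(iii)\Rightarrow(ii)$: the adjoint is automatically closed, so $B=-B^*$ is closed; skew-adjointness immediately gives $\re\langle Bu,u\rangle_V=0$, so $\pm B$ are dissipative. By Proposition~\ref{prop:standard}.5 the range $(\Id-B)\dom(B)$ is closed, and its orthogonal complement equals $\ker(\Id-B^*)=\ker(\Id+B)$; if $v$ lies in this kernel then $\langle Bv,v\rangle_V=-\|v\|^2$, which combined with $\re\langle Bv,v\rangle_V=0$ forces $v=0$. Hence $(\Id-B)\dom(B)=V$, proving $B$ is $m$-dissipative; the same argument applied to $-B$ (whose adjoint is $B$) shows $-B$ is $m$-dissipative.

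The main obstacle, and the nontrivial ingredient, is the step in $(ii)\Rightarrow(iii)$ where one upgrades the inclusion $B\subset -B^*$ (which only uses skew-symmetry) to equality: this requires knowing that $-B^*$ is itself $m$-dissipative, and that fact genuinely uses the assumption on $-B$ together with the Hilbert-space adjoint-semigroup fact, not just the dissipativity of $B$ alone.
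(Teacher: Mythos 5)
Your proof is correct, but it routes the equivalences differently from the paper and uses different key ingredients for the implications involving $(iii)$. The paper obtains $(i)\Rightarrow(iii)$ in one line from the group identity $U(-t)=U(t)^*$: since $B^*$ generates $(U(t)^*)_{t\geq 0}$ and $-B$ generates $(U(-t))_{t\geq 0}$, the generators coincide, so $-B=B^*$. You instead prove $(ii)\Rightarrow(iii)$ by combining skew-symmetry (giving $B\subset -B^*$) with the observation that $-B^*=(-B)^*$ is itself $m$-dissipative (via the adjoint semigroup) and then invoking maximal dissipativity of $B$ to upgrade the inclusion to equality; this is sound, just less direct. In the reverse direction, the paper deduces $(iii)\Rightarrow(i)$ from its boundary-quadruple machinery: $-B=B^*$ forces $\Gm=\Gp=0$ by Proposition~\ref{prop:3.12}, whence Proposition~\ref{prop:3.13} yields that $\pm B$ generate contraction semigroups. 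You give a self-contained range argument instead: $\ran(\Id-B)$ is closed by Proposition~\ref{prop:standard}(5), and its orthogonal complement $\ker(\Id-B^*)=\ker(\Id+B)$ is trivial because $v\in\ker(\Id+B)$ gives $\re\langle Bv,v\rangle_V=-\|v\|_V^2=0$; the same applies to $-B$ since $(-B)^*=B$. Your version of this step is more elementary (it does not rely on Section~\ref{sec:3} at all and is essentially the classical "basic criterion" for skew-adjointness), at the cost of redoing by hand what the paper gets for free from its general extension theory; the paper's version illustrates how the quadruple formalism subsumes the classical statement. Both arguments are complete and all the auxiliary facts you invoke (closedness of adjoints, $(\ran T)^\perp=\ker T^*$, maximality of $m$-dissipative operators) are available in the paper or standard.
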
     
\begin{proof}
$(ii)\Longleftrightarrow (i)$ $\pm B$ are $m$-dissipative if and only $\pm B$ generate contractive $C_0$-semigroups $(U_{\pm}(t))_{t\geq 0}$. This in turn is equivalent to $B$ generating a contractive hence  isometric group; i.e. a unitary $C_0$-group. \\
$(i)\Rightarrow (iii)$ Let $B$ be the generator of the unitary group $(U(t))_{t\in\R}$. Then $B^*$ generates the semigroup $(U(t)^*)_{t\geq 0}$ and $-B$ the $C_0$-semigroup $(U(-t))_{t\geq 0}$. Since $U(-t)=U(t)^*$ for all $t\geq 0$, it follows that $-B=B^*$. \\
$(iii)\Rightarrow (i)$ If $-B=B^*$, then $B$ is skew-symmetric and $\Gp  =\Gm  =0$ by Proposition~\ref{prop:3.12}. By Proposition~\ref{prop:3.13}, $-B^*=B$ generates a contractive $C_0$-semigroup. Applying the same result to $-B$ instead of $B$ we deduce that also $-B$ generates a contractive $C_0$-semigroup. Thus $B$ generates a unitary $C_0$-group.    
\end{proof}
Now we describe all generators of a unitary $C_0$-group which are extensions of  a given skew-symmetric operator. 

Recall that $V$ is a real or complex Hilbert space. Let $A_0$ be a densely skew-symmetric operator on $V$ and let $A=-A_0^*$. Let $(\Hm,\Hp,\Gm  ,\Gp  )$ be a  boundary quadruple for $A_0$. 
We let $\Hm =\ran \Gm  $, $\Hp  =\ran \Gp  $ and if $\Phi\in {\mathcal L}(\Hm ,\Hp  )$ is a contraction, we define $A_\phi\subset A$ as before on the domain
\[  \dom(A_\Phi):=\{ w\in\dom(A):\Phi \Gm  w=\Gp  w\}\mbox{ by }A_\Phi w:=Aw.\]
\begin{thm}\label{th:4.2}
Let $B$ be an operator on $V$ such that $A_0\subset B$. The following assertions are equivalent:
\begin{itemize}
	\item[(i)] $B$ generates a unitary $C_0$-group;
	\item[(ii)] there exists a unitary operator $\Phi\in{\mathcal L}(\Hm ,\Hp  )$ such that $B=A_\Phi$.   
\end{itemize} 
\end{thm}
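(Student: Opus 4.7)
My plan is to reduce the theorem to a double application of Theorem~\ref{th:3.9}, via Proposition~\ref{prop:4.1}, which already translates ``$B$ generates a unitary $C_0$-group'' into the symmetric condition that both $B$ and $-B$ are $m$-dissipative. The ingredient that makes the reduction work is the observation that if $(\Hm,\Hp,\Gm,\Gp)$ is a boundary quadruple for $A_0$, then $(\Hp,\Hm,\Gp,\Gm)$, with the roles of $\pm$ swapped, is a boundary quadruple for $-A_0$. This is essentially a sign-chasing check: $-A_0$ is densely defined and skew-symmetric with $(-A_0)^*=-A_0^*$, so $-(-A_0)^*=-A$ plays the role of $A$; the right-hand side of \eqref{eq:3.2} changes sign exactly when the two Hilbert spaces in $\pm$ are interchanged, while \eqref{eq:3.3} and \eqref{eq:3.4n} are symmetric in $\pm$.

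For the implication (i)$\Rightarrow$(ii), I assume $B$ generates a unitary $C_0$-group. By Proposition~\ref{prop:4.1}, $B$ is an $m$-dissipative extension of $A_0$ and $-B$ is an $m$-dissipative extension of $-A_0$. Theorem~\ref{th:3.9} applied to $A_0$ produces a contraction $\Phi:\Hm\to\Hp$ with $B=A_\Phi$, and applied to $-A_0$ with the swapped quadruple it produces a contraction $\Psi:\Hp\to\Hm$ such that $D(B)=\{w\in D(A):\Psi\Gp w=\Gm w\}$ and $Bw=Aw$ on this domain. The key step is to identify $\Psi$ as $\Phi^{-1}$ using the interpolation Lemma~\ref{lem:3.2}: given $x\in\Hm$, choose $w\in D(A)$ with $\Gm w=x$ and $\Gp w=\Phi x$; this $w$ lies in $D(B)$, so $\Psi\Phi x=\Psi\Gp w=\Gm w=x$, giving $\Psi\Phi=\mathrm{Id}_{\Hm}$. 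Exchanging the roles of $\Gm,\Gp$ and of $\Phi,\Psi$ yields $\Phi\Psi=\mathrm{Id}_{\Hp}$. A contraction with a contractive two-sided inverse is an isometric isomorphism, and between Hilbert spaces such an operator is unitary by polarization.

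For the converse (ii)$\Rightarrow$(i), if $\Phi:\Hm\to\Hp$ is unitary, then $\Phi^{-1}:\Hp\to\Hm$ is also a contraction, and the condition $\Phi\Gm w=\Gp w$ is equivalent to $\Phi^{-1}\Gp w=\Gm w$. Hence $-A_\Phi=(-A)_{\Phi^{-1}}$ as operators, where $(-A)_{\Phi^{-1}}$ is the operator associated to the boundary quadruple $(\Hp,\Hm,\Gp,\Gm)$ for $-A_0$ and the contraction $\Phi^{-1}$. By Theorem~\ref{th:3.9} applied to $-A_0$, this operator is $m$-dissipative, so $-B$ is $m$-dissipative; together with the $m$-dissipativity of $B=A_\Phi$ from Theorem~\ref{th:3.9} applied to $A_0$, Proposition~\ref{prop:4.1} concludes that $B$ generates a unitary $C_0$-group. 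The only mild obstacle I foresee is bookkeeping: keeping straight the sources and targets of $\Phi$ and $\Psi$ after swapping $\Hm$ and $\Hp$, and checking that the domain equality obtained from two different applications of Theorem~\ref{th:3.9} genuinely forces the algebraic identity $\Psi=\Phi^{-1}$ rather than merely inclusion of graphs; both points are handled cleanly by Lemma~\ref{lem:3.2}.
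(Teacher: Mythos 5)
Your proposal is correct and follows essentially the same route as the paper: both directions reduce to two applications of Theorem~\ref{th:3.9} (to $A_0$ and to $-A_0$ with the swapped quadruple $(\Hp,\Hm,\Gp,\Gm)$), and the identification $\Psi=\Phi^{-1}$ via the interpolation Lemma~\ref{lem:3.2} is exactly the paper's argument. Your explicit remark that a contraction with a contractive two-sided inverse is an isometric isomorphism, hence unitary, just spells out a step the paper leaves implicit.
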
 
  \begin{proof}
 $(i)\Rightarrow (ii)$ Let $B$ be the generator of a unitary group. Then $\pm B$ are $m$-dissipative. By Theorem~\ref{th:3.9} there exists a contraction $\Phi:\Hm \to \Hp  $ such that $B=A_\Phi$. Apply Theorem~\ref{th:3.9} to $-B$ instead of $B$ and observe that $(\Hm,\Hp,\Gp  ,\Gm  )$ is a boundary quadruple for $-A_0$; here we just interchanged $\Gm  $ and $\Gp  $. Thus  we find a contraction $\Psi:\Hp  \to \Hm $ such that 
 \[ \dom(-B)=\{  w\in\dom(A):\Psi \Gp   w=\Gm  w\}.\]
Let $\xm    \in \Hm $. We claim that $\Psi \Phi \xm    =\xm    $. In fact, by the Interpolation Lemma (Lemma~\ref{lem:3.2}) there exists $w\in \dom(A)$ such that $\Gm   w=\xm    $ and  $\Gp   w=\Phi \xm$. Thus $\Phi (\Gm   w)=\Gp   w$ and so $w\in \dom(A_\Phi)=\dom(B)$. Hence $w\in \dom(-B)=\dom(B)$. Consequently,
\[   \Psi \Gp   w=\Gm   w\mbox{ i.e. }\Psi \Phi \xm    =\Psi \Gp   w=\Gm   w=\xm    .\]
This proves the claim. One  similarly shows that $\Phi \Psi \xp    =\xp    $ for all $\xp    \in \Hp  $. Thus $\Phi$ is unitary.  \\
$(ii)\Rightarrow (i)$ Assume that $\Phi:\Hm \to \Hp  $ is unitary and $B=A_\Phi$. In particular 
\[  \dom(B)=\{  w\in\dom(A):\Phi \Gm  w=\Gp  w\}=\{  w\in\dom(A):\Gm  w=\Phi^{-1}\Gp  w\}.\]
Thus $-B $ is $m$-dissipative by Theorem~\ref{th:3.9}. We have shown that $\pm B$ are $m$-dissipative. 
 \end{proof}
\begin{cor}\label{cor:4.3}
The following assertions are equivalent:
\begin{itemize}
	\item[(i)] there exists a generator $B$ of a unitary group such that $A_0\subset B$;
	\item[(ii)] $\Hm $ and $\Hp  $ are isomorphic as Hilbert spaces;
	\item[(iii)] $\ker (\Id -A)$ and $\ker (\Id + A)$ are isomorphic. 
\end{itemize}
\end{cor}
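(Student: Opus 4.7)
The plan is to obtain the implication $(i)\Leftrightarrow (ii)$ as a direct corollary of Theorem~\ref{th:4.2}, and then to deduce $(ii)\Leftrightarrow (iii)$ by transferring the equivalence to the particular boundary quadruple produced in Proposition~\ref{prop:3.7}.

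First, for $(i)\Leftrightarrow(ii)$: by Theorem~\ref{th:4.2} the existence of a generator $B$ of a unitary $C_0$-group with $A_0\subset B$ is equivalent to the existence of a unitary operator $\Phi\in \mathcal{L}(\Hm,\Hp)$. Since $\Hm$ and $\Hp$ are Hilbert spaces (by Proposition~\ref{prop:3.5}), the existence of a unitary isomorphism between them is equivalent to their being isomorphic as Hilbert spaces, i.e.\ to $(ii)$. This is immediate and needs no further argument.

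Next, for $(i)\Leftrightarrow(iii)$: note that property $(i)$ is intrinsic to the skew-symmetric operator $A_0$ and does not depend on the choice of a boundary quadruple. On the other hand, Proposition~\ref{prop:3.7} exhibits a concrete boundary quadruple $(\tilde{\Hm},\tilde{\Hp},\tilde{\Gm},\tilde{\Gp})$ for $A_0$ with
\[ \tilde{\Hm}=\ker(\Id+A),\qquad \tilde{\Hp}=\ker(\Id-A).\]
Applying the already-established equivalence $(i)\Leftrightarrow(ii)$ with this particular quadruple in place of the fixed one, we get that $(i)$ is equivalent to $\tilde{\Hm}\cong \tilde{\Hp}$ as Hilbert spaces, which is precisely $(iii)$. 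Combining with $(i)\Leftrightarrow(ii)$ closes the loop.

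There is no real obstacle here; the only conceptual subtlety is that $(ii)$ as stated depends on the boundary quadruple that was fixed at the beginning of Section~\ref{sec:4}, and one must resist the temptation to prove $\Hm\cong\Hp\Leftrightarrow \ker(\Id+A)\cong\ker(\Id-A)$ directly. Routing the argument through the intrinsic property $(i)$ via Theorem~\ref{th:4.2} makes both equivalences painless and simultaneously shows, as a by-product, that the Hilbert-space isomorphism class of $\Hm$ and $\Hp$ does not depend on the particular choice of boundary quadruple (at least as far as the existence of an isomorphism between them is concerned).
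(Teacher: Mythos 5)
Your proposal is correct and follows essentially the same route as the paper: $(i)\Leftrightarrow(ii)$ is read off from Theorem~\ref{th:4.2}, and $(iii)$ is obtained by invoking the concrete boundary quadruple of Proposition~\ref{prop:3.7} with $\Hm=\ker(\Id+A)$ and $\Hp=\ker(\Id-A)$. Your remark that one should route $(ii)\Leftrightarrow(iii)$ through the intrinsic property $(i)$ is exactly the (unstated) mechanism behind the paper's one-line reference to Proposition~\ref{prop:3.7}.
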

\begin{proof}
	$(i)\Longleftrightarrow (ii)$ follows from Theorem~\ref{th:4.2}.\\
	 $(ii)\Longleftrightarrow (iii)$ follows from Proposition~\ref{prop:3.7}.
\end{proof}
\section{Selfadjoint extensions of symmetric operators}\label{sec:5}
In this section we consider a complex Hilbert space $V$. An operator $S$ on $V$ is called \emph{symmetric} if 
\[  \langle Su,v\rangle_V=\langle u,Sv\rangle_V\mbox{ for all }u,v\in\dom(S).\]
If $S$ is densely defined, then $S$ is symmetric if and only if $S\subset S^*$. An operator $T$ on $V$ is called \emph{selfadjoint} if $\dom(T)$ is dense in $V$ and $T=T^*$. 

It is immediate that an operator $S$ is symmetric if and only if $iS$ is skew-symmetric.  

Selfadjointness can be characterized by Stone's Theorem.
\begin{thm}[Stone]\label{th:5.1}
Let $T$ be an operator on $H$. The following assertions are equivalent:
\begin{itemize}
	\item[(i)] $T$ is selfadjoint;
	\item[(ii)] $iT$ generates a unitary $C_0$-group.   
\end{itemize} 
\end{thm}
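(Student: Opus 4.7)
The plan is to reduce Stone's Theorem directly to Proposition~\ref{prop:4.1} applied to the operator $B := iT$. The bridge between the two statements is the elementary identity
\[
(iT)^* = -iT^*
\]
which holds whenever $T$ is densely defined (it is immediate from the definition of the adjoint, using that $\langle iTu, v\rangle_V = -i \langle Tu, v\rangle_V$ and conjugate-linearity in the second slot).

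For the direction $(i) \Rightarrow (ii)$: assume $T$ is selfadjoint, so $\dom(T)$ is dense and $T = T^*$. Set $B = iT$. Then $\dom(B) = \dom(T)$ is dense, and using the identity above,
\[
B^* = (iT)^* = -iT^* = -iT = -B.
\]
Hence $B$ satisfies condition (iii) of Proposition~\ref{prop:4.1}, so $B = iT$ generates a unitary $C_0$-group.

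For the direction $(ii) \Rightarrow (i)$: assume $iT$ generates a unitary $C_0$-group. Then by Proposition~\ref{prop:4.1}, $\dom(iT) = \dom(T)$ is dense and $-(iT) = (iT)^*$. Since $T$ is densely defined, the identity $(iT)^* = -iT^*$ applies, giving $-iT = -iT^*$, whence $T = T^*$. Combined with density of $\dom(T)$, this is exactly selfadjointness.

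There is no real obstacle here beyond recording the adjoint identity $(iT)^* = -iT^*$; the substance of Stone's Theorem has already been packaged into Proposition~\ref{prop:4.1} (generator of unitary group $\Leftrightarrow$ densely defined with $-B = B^*$), and everything else is just rotating by the scalar $i$. Note that the argument uses $\K = \C$ in an essential way, both because the scalar $i$ must be available and because Proposition~\ref{prop:4.1} is applied over the complex field; this matches the hypothesis of this section that $V$ is a complex Hilbert space.
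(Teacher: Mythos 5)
Your proof is correct and follows essentially the same route as the paper: both reduce Stone's Theorem to Proposition~\ref{prop:4.1} via the identity $(iT)^*=-iT^*$ for densely defined $T$. The only difference is that you spell out the two implications separately, while the paper compresses them into a single chain of equivalences.
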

\begin{proof}
	Let $T$ be densely defined. Then $(iT)^*=-iT^*$. Thus $T$ is selfadjoint if and only if $(iT)^*=-iT$. By Proposition~\ref{prop:4.1}, this is equivalent to $(ii)$.  
\end{proof}
Theorem~\ref{th:5.1} allows us to transport our results from Section~\ref{sec:4} to the characterization of all selfadjoint extensions of a symmetric operator. To that aim, let $S$ be a densely defined symmetric operator on $V$. 
\begin{defn}\label{def:5.2}	
A \emph{boundary quadruple} $(\Hm,\Hp,,\Gm  ,\Gp  )$ for $S$ consists of  complex pre-Hilbert spaces $\Hm,\Hp$ and linear surjective maps $\Gm:\dom(S^*)\to \Hm$, $\Gp:\dom(S^*)\to \Hp$ such that  
\begin{equation}\label{eq:5.1}
	\langle S^* u,v\rangle_V -\langle u,S^* v\rangle_V =i\left(  \langle \Gm  u,\Gm   v\rangle_{\Hm}-\langle \Gp  u,\Gp  v\rangle_{\Hp} \right)  
\end{equation}	
for all $u,v\in \dom(S^*)$, and 
\begin{equation}\label{eq:5.1bis}
\ker \Gp   +\ker \Gm   =\dom(S^*).
\end{equation}
\end{defn}	 
\begin{prop}\label{prop:5.3}
A  boundary quadruple $(\Hm,\Hp,\Gm  ,\Gp)$ for $S$ always exists. Then $\Hm$ and $\Hp$ are complete and $\Gm  ,\Gp  $ are continuous if $\dom(S^*)$ carries the graph norm.  
\end{prop}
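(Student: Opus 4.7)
The plan is to reduce everything to the skew-symmetric case already treated in Sections~\ref{sec:3} and~\ref{sec:4} via the standard trick: $S$ is symmetric if and only if $A_0:=iS$ is skew-symmetric. I would first work out what this substitution does to the adjoints and to the boundary identity, and then simply invoke the results already established.

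Specifically, since $S$ is densely defined and symmetric, $A_0=iS$ is densely defined and skew-symmetric. Moreover $A_0^*=(iS)^*=-iS^*$, so that $A:=-A_0^*=iS^*$. In particular $\dom(A)=\dom(S^*)$ and, for $u\in\dom(A)$, $\|Au\|_V=\|S^*u\|_V$, so the graph norms of $A$ and $S^*$ coincide. Now multiply the identity $\langle S^*u,v\rangle_V-\langle u,S^*v\rangle_V=i(\langle \Gm u,\Gm v\rangle_{\Hm}-\langle \Gp u,\Gp v\rangle_{\Hp})$ of \eqref{eq:5.1} by $i$: one gets exactly
\[
\langle Au,v\rangle_V+\langle u,Av\rangle_V=\langle \Gp u,\Gp v\rangle_{\Hp}-\langle \Gm u,\Gm v\rangle_{\Hm},
\]
which is \eqref{eq:3.2} for $A_0$. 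The interpolation/surjectivity conditions \eqref{eq:5.1bis} and \eqref{eq:3.3} (together with \eqref{eq:3.4n}) are identical in both settings. Hence the notion of a boundary quadruple for the symmetric operator $S$ coincides literally with the notion of a boundary quadruple for the skew-symmetric operator $A_0=iS$.

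Existence then follows from Section~\ref{sec:3}: apply either the spectral construction of Example~\ref{ex:3.6} or the direct-sum construction of Proposition~\ref{prop:3.7} to the skew-symmetric operator $A_0=iS$, producing pre-Hilbert spaces $\Hm,\Hp$ and surjective linear maps $\Gm,\Gp:\dom(A)=\dom(S^*)\to \Hm,\Hp$ satisfying the required identity. Completeness of $\Hm$ and $\Hp$ is then the content of Proposition~\ref{prop:3.5}, while continuity of $\Gm$ and $\Gp$ with respect to the graph norm of $A$ (equivalently, of $S^*$) is given by Lemma~\ref{lem:3.4}.

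I do not expect any real obstacle; the only things to check carefully are the sign conventions when passing from the skew-symmetric boundary identity \eqref{eq:3.2} to the symmetric one \eqref{eq:5.1}, and the observation that the graph norms of $A=iS^*$ and $S^*$ agree so that the continuity and completeness statements transport verbatim. The main point to emphasize in the write-up is the conceptual one: the whole theory for symmetric operators is a formal corollary of the skew-symmetric theory via the bijection $S\leftrightarrow iS$.
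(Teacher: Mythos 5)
Your proposal is correct and follows essentially the same route as the paper: reduce to the skew-symmetric operator $A_0=iS$ with $A=-A_0^*=iS^*$, check that multiplying the symmetric boundary identity \eqref{eq:5.1} by $i$ yields \eqref{eq:3.2}, and then quote Example~\ref{ex:3.6}/Proposition~\ref{prop:3.7} for existence, Proposition~\ref{prop:3.5} for completeness, and Lemma~\ref{lem:3.4} for continuity. Your extra remark that the graph norms of $A=iS^*$ and $S^*$ coincide is a useful (if implicit in the paper) detail that makes the transfer of the continuity statement airtight.
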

\begin{proof}
	The operator $A_0:=iS$ is skew-symmetric and $-A_0^*=iS^*$. Let $(\Hm,\Hp, \Gm  ,\Gp  )$ be a  boundary quadruple for $A_0$. Then 
	\begin{eqnarray*}
		\langle S^* u,v\rangle_V -\langle u,S^* v\rangle_V  & =  &-i \left(   \langle iS^*u,v\rangle_V+\langle  u,iS^*v\rangle_V\right)\\
		 & = & -i\left( \langle \Gp   u,\Gp   v\rangle_{\Hp}-\langle \Gm   u,\Gm  v\rangle_{\Hm} \right) 
	\end{eqnarray*}	
for all $u,v\in \dom(S^*)$. Now the existence follows from Example~\ref{ex:3.6} and Proposition~\ref{prop:3.7}. Proposition~\ref{prop:3.5} says that $\Hm $ and $\Hp  $ are complete and Lemma~\ref{lem:3.4} that $\Gm  ,\Gp  $ are continuous.   
\end{proof}
Let $(\Hm,\Hp,\Gp  ,\Gm  )$ be a boundary quadruple for $S$. Then we have the following two results. 
\begin{prop}\label{prop:5.4}
The following assertions are equivalent:
\begin{itemize}
	\item[(i)] $S$ has a selfadjoint extension;
	\item[(ii)]$\Hp  $ and $\Hm $ are  isomorphic, i.e. there exists a unitary operator from $\Hp  $ onto $\Hm $;
	\item[(iii)] $\ker (i\Id -S^*)$ and $\ker (i\Id +S^*)$ are isomorphic.    
\end{itemize}
\end{prop}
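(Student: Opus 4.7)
The plan is to reduce Proposition~\ref{prop:5.4} to Corollary~\ref{cor:4.3} via the skew-symmetric operator $A_0 := iS$, exactly as the proof of Proposition~\ref{prop:5.3} already sets up. I would first record that $A_0$ is densely defined and skew-symmetric, that $A := -A_0^* = iS^*$, and that (by the proof of Proposition~\ref{prop:5.3}) the same quadruple $(\Hm,\Hp,\Gm,\Gp)$ serves as a boundary quadruple for $A_0$ in the sense of Section~\ref{sec:3}. Applying Corollary~\ref{cor:4.3} to this quadruple then yields the chain of equivalences: there exists a generator $B$ of a unitary $C_0$-group with $A_0 \subset B$ if and only if $\Hm$ and $\Hp$ are isomorphic, if and only if $\ker(\Id - A) \cong \ker(\Id + A)$.

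Next, I would match assertion (i) of the proposition with the first condition above by invoking Stone's theorem (Theorem~\ref{th:5.1}). If $T$ is a selfadjoint extension of $S$, then $iT$ generates a unitary $C_0$-group and $iT \supset iS = A_0$. Conversely, any generator $B$ of a unitary $C_0$-group extending $A_0$ has the form $B = iT$ where $T := -iB$ is selfadjoint by Stone, and the inclusion $iS \subset B = iT$ gives $S \subset T$. This produces the equivalence (i) $\Longleftrightarrow$ (ii).

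Finally, I would translate the kernel condition from $A$ back to $S^*$. Since $A = iS^*$, an element $u$ lies in $\ker(\Id - A)$ if and only if $u = iS^* u$, equivalently $S^* u = -iu$, that is, $u \in \ker(i\Id + S^*)$. Similarly $\ker(\Id + A) = \ker(i\Id - S^*)$. Hence condition (iii) of Corollary~\ref{cor:4.3} transcribes verbatim into condition (iii) of Proposition~\ref{prop:5.4} (the two kernels are interchanged, which is immaterial for the existence of an isomorphism between them).

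No genuine obstacle is expected: the argument is a clean dictionary between the skew-symmetric and symmetric setups built on Corollary~\ref{cor:4.3} and Stone's theorem. The only point requiring care is the sign bookkeeping in the final identification of $\ker(\Id \mp A)$ with $\ker(i\Id \pm S^*)$.
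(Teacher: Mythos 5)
Your proposal is correct and follows exactly the paper's route: the paper's (very terse) proof also sets $A_0=iS$, notes $A=-A_0^*=iS^*$, and declares that Proposition~5.4 follows from Corollary~4.3. You have merely filled in the details the paper leaves implicit (the Stone's-theorem dictionary between selfadjoint extensions of $S$ and unitary-group generators extending $A_0$, and the sign bookkeeping identifying $\ker(\Id\mp A)$ with $\ker(i\Id\pm S^*)$), all of which check out.
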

\begin{thm}\label{th:5.5}
Assume that $\Hp  $ and $\Hm $ are isomorphic. Let $T$ be an operator such that $S\subset T$. The following assertions are equivalent:
\begin{itemize}
	\item[(i)] $T$ is selfadjoint;
	\item[(ii)] there exists a unitary operator $\Phi:\Hp  \to \Hm $ such that
	\[   \dom(T)=\{  w\in \dom(S^*):\Phi \Gp   w=\Gm  w\},\,\, Tw=S^*w\]
	for all $w\in \dom(T)$.  
\end{itemize}
\end{thm}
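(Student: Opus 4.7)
The plan is to reduce Theorem~\ref{th:5.5} to Theorem~\ref{th:4.2} via Stone's Theorem. First, I would set $A_0 := iS$, which is skew-symmetric because $S$ is symmetric, and note that $A := -A_0^{*} = iS^{*}$, so in particular $\dom(A) = \dom(S^{*})$. The first step is to verify that the given quadruple $(\Hm,\Hp,\Gm,\Gp)$, which satisfies Definition~\ref{def:5.2} for $S$, is automatically a boundary quadruple for $A_0$ in the sense of Definition~\ref{def:3.1}. This is a direct computation: multiplying \eqref{eq:5.1} by $i$ yields
\[
\langle iS^{*}u,v\rangle_V + \langle u,iS^{*}v\rangle_V \;=\; \langle \Gp u,\Gp v\rangle_{\Hp} - \langle \Gm u,\Gm v\rangle_{\Hm},
\]
which is precisely \eqref{eq:3.2} for $A = iS^{*}$; the kernel condition \eqref{eq:5.1bis} is literally \eqref{eq:3.3}, and surjectivity of $\Gm,\Gp$ gives \eqref{eq:3.4n}.

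Next, I would invoke Stone's Theorem~\ref{th:5.1}: $T$ is selfadjoint if and only if $iT$ generates a unitary $C_0$-group. Since $S \subset T$ is equivalent to $A_0 = iS \subset iT$, we may apply Theorem~\ref{th:4.2} to the operator $iT$ as an extension of $A_0$. This yields: $iT$ generates a unitary $C_0$-group if and only if there exists a unitary operator $\Psi \in \mathcal L(\Hm,\Hp)$ such that
\[
\dom(iT) = \{w \in \dom(A) : \Psi\,\Gm w = \Gp w\}\quad\text{and}\quad iTw = Aw = iS^{*}w.
\]

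Finally, to match the statement of Theorem~\ref{th:5.5}, I would set $\Phi := \Psi^{-1} \in \mathcal L(\Hp,\Hm)$, which is unitary precisely when $\Psi$ is. The defining condition $\Psi\,\Gm w = \Gp w$ is then equivalent to $\Gm w = \Phi\,\Gp w$, and dividing $iTw = iS^{*}w$ by $i$ gives $Tw = S^{*}w$. This delivers exactly the characterization in (ii), and the map $\Psi \mapsto \Psi^{-1}$ being a bijection between unitaries $\Hm \to \Hp$ and unitaries $\Hp \to \Hm$ closes the equivalence in both directions. There is no real obstacle: the only point requiring care is getting the direction of the unitary right ($\Phi : \Hp \to \Hm$ versus $\Psi : \Hm \to \Hp$) and tracking the factor of $i$ consistently between Definitions~\ref{def:3.1} and \ref{def:5.2}, both of which are handled by the inversion $\Phi = \Psi^{-1}$.
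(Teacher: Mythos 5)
Your proof is correct and follows essentially the same route as the paper: set $A_0=iS$, check (as the paper does in Proposition~\ref{prop:5.3}) that the quadruple for $S$ is a boundary quadruple for $A_0$, and then combine Stone's Theorem~\ref{th:5.1} with Theorem~\ref{th:4.2}. Your explicit handling of the inversion $\Phi=\Psi^{-1}$ to reverse the direction of the unitary is a detail the paper leaves implicit, but it is exactly the intended argument.
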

\begin{proof}[Proof of Propostiion~\ref{prop:5.4} and Theorem~\ref{th:5.5}]
	Let $A_0=iS$. Then we have $A=-A_0^*=iS^*$. Thus Proposition~\ref{prop:5.4} follows from Corollary~\ref{cor:4.3}.   Let $S\subset T$. Then $iS\subset iT$. By Theorem~\ref{th:5.1}, $T$ is selfadjoint if  and only if $iT$ generates a unitary $C_0$-group. By Theorem~\ref{th:4.2}, this is equivalent to $(ii)$ of Theorem~\ref{th:5.5}
	\end{proof}	
Specified to the  boundary quadruple of Proposition~\ref{prop:3.5}, Theorem~\ref{th:5.5} is a variant of \cite[Theorem X.2]{RS75}. A boundary triple for $S$ as defined in \cite[Chapter 14]{Sch12} is essentially equivalent to a  boundary quadruple in which $\Hp$ and $\Hm  $ are isomorphic. Then Theorem~\ref{th:5.5} also follows from the equivalence of $(i)$ and $(iii)$ in \cite[Theorem 4.10]{Sch12}. 

\section{Examples}\label{sec:6}
Let $H$ be a Hilbert space over $\K=\R$ or $\C$ and let $-\infty\leq a<b\leq \infty$, 
\[  L^2((a,b), H):=\{  u:(a,b)\to H\mbox{ measurable; } \int_a^b \|u(t)\|_H^2 dt<\infty\}. \]  
Define the Sobolev space \\
$H^1((a,b),H) := \{ u\in L^2((a,b),H):\exists u'\in L^2((a,b),H)\mbox{ such that }\\
-\int_a^b \varphi'(t)u(t)dt = \int_a^b u'(t)\varphi(t) dt
  \mbox{ for all }\varphi\in {\mathcal C}_c^\infty ((a,b),\R)\}$. 
  
  Then $H^1((a,b),H)$ is a Hilbert space for the norm 
  \[  \|u\|_{H^1}^2=\|u\|_{L^2}^2 +\|u'\|_{L^2}^2.\]

\begin{lem}\label{lem:6.1}
	One has 
	\begin{itemize}
		\item[a)] $H^1((a,b),H)\subset {\mathcal C}([a,b],H)$ if $-\infty <a,b<\infty$.
		\item[b)] $H^1((a,b),H)\subset {\mathcal C}([a,\infty),H)$ if $a>-\infty$ and $\lim_{t\to\infty}u(t)=0$ for all $u\in H^1((a,\infty),H)$. 
		\item[c)] $H^1((-\infty,b),H)\subset \{  u\in {\mathcal C} ((-\infty,b],H):\lim_{t\to -\infty} u(t)=0\}$. 
		\item[d)] $H^1((-\infty,\infty),H)\subset\{ u\in{\mathcal C} ((-\infty,\infty),H):\lim_{t\to\pm\infty}  u(t)=0\}$. 
	\end{itemize}
\end{lem}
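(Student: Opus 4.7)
The plan is to reduce everything to the vector-valued fundamental theorem of calculus and one integration-by-parts identity for the squared norm.

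First I would establish the key representation: every $u\in H^1((a,b),H)$ admits a representative, still denoted $u$, such that for any $s_0\in (a,b)$,
\[ u(t) = u(s_0) + \int_{s_0}^t u'(\tau)\,d\tau \quad \text{for all } t\in(a,b). \]
This is the vector-valued analogue of the scalar fundamental theorem. It is proved by fixing a Lebesgue point $s_0$, setting $\tilde u(t):= u(s_0)+\int_{s_0}^t u'(\tau)\,d\tau$, and checking via Fubini that $\int \tilde u(t)\varphi'(t)\,dt = -\int u'(t)\varphi(t)\,dt$ for every $\varphi\in \mathcal C_c^\infty((a,b),\R)$, which forces $u=\tilde u$ a.e. by the defining weak-derivative property.

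From this representation, part (a) is immediate: on a bounded interval, $u'\in L^2((a,b),H)\subset L^1((a,b),H)$ by Cauchy--Schwarz, so the Bochner indefinite integral $t\mapsto \int_{s_0}^t u'(\tau)\,d\tau$ is continuous on the closed interval $[a,b]$ by dominated convergence. For parts (b), (c), (d), the same formula gives continuity on $(a,b)$; extending up to finite endpoints is done as in (a), whereas what remains is to control the behaviour at $\pm\infty$.

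The core analytic step, used for (b)--(d), is the identity
\[ \|u(t)\|_H^2 - \|u(s)\|_H^2 = 2\,\re \int_s^t \langle u'(\tau),u(\tau)\rangle_H\,d\tau, \]
valid for all $s<t$ in the interval. I would prove this by mollification: convolve $u$ with a smooth approximate identity $\rho_n$ compactly supported near $0$, apply the ordinary product rule to the smooth functions $u\ast\rho_n$, and pass to the limit using strong $L^2_{\mathrm{loc}}$ convergence of both $u\ast\rho_n$ and $(u\ast \rho_n)'=u'\ast\rho_n$ together with continuity of the inner product. Once this identity is in hand, the integrand on the right is in $L^1((a,\infty),\K)$ by Cauchy--Schwarz, since $u,u'\in L^2$, so the limit $\lim_{t\to\infty}\|u(t)\|_H^2$ exists. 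Because $\|u(\cdot)\|_H^2\in L^1((a,\infty))$, this limit must be $0$. The case $t\to -\infty$ in (c) is symmetric, and (d) follows by applying (b) on $(0,\infty)$ and (c) on $(-\infty,0)$.

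The main obstacle is the mollification argument underlying the squared-norm identity, since one must argue carefully with Bochner integrals and the non-standard complex-valued version of the product rule; everything else is either the fundamental theorem for Bochner integrals or straightforward use of Cauchy--Schwarz and integrability of $\|u(\cdot)\|_H^2$.
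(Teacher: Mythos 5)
Your argument is correct and is essentially the standard scalar proof (Brezis, Theorem 8.2 and Corollary 8.9) transported to Bochner integrals, which is exactly what the paper does -- it gives no proof of its own and simply refers to those references and to the vector-valued treatment in Arendt--Urban. The only point you gloss over is that a locally integrable $H$-valued function with vanishing weak derivative is a.e.\ constant (needed to conclude $u=\tilde u$), but this is routine, e.g.\ by applying continuous linear functionals and the scalar result.
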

This can be proved as in the scalar case, see \cite[Theorem 8.2 and Corollary 8.9]{Br11} and \cite[Sec. 8.5]{AU22} for a vector-valued version. 

Here we identify each $u\in H^1((a,b),H)$ with the unique $\widetilde{u}\in {\mathcal C}([a,b]\cap \R,H)$ such that $\widetilde{u}(t)=u(t)$ a.e. The following integration-by-parts formula holds, see \cite[Corollary 8.9 and 8.10]{Br11}.
\begin{lem}\label{lem:6.2}
Let $u,v\in H^1((a,b), H)$. Then $\langle u'(\cdot),v(\cdot)\rangle_H,\,\, \langle u(\cdot),v'(\cdot)\rangle_H\in L^1((a,b),H)$ and 
\[  \int_a^b \langle u'(t),v(t)\rangle_H dt + \int_a^b\langle u(t),v'(t)\rangle_H dt \]
\[=\begin{cases}
	  \langle u(b),v(b)\rangle_H -\langle u(a),v(a)\rangle_H \mbox{ if } -\infty<a<b<\infty;\\
	  \langle u(b),v(b)\rangle_H \mbox{  if }-\infty=a<b<\infty;\\
	  -\langle u(a),v(a)\rangle_H \mbox{ if }-\infty<a<b=\infty;\\
	  0\mbox{ if }a=-\infty\mbox{ and }b=\infty. 
\end{cases}\]
\end{lem}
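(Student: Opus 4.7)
The plan is to dispose of the $L^1$-integrability first, then prove the bounded-interval case by mollification, and finally reduce the three unbounded cases to the bounded one by truncation, using the vanishing at infinity provided by Lemma~\ref{lem:6.1}.

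\smallskip\noindent\textbf{Step 1 (Integrability).} By Cauchy--Schwarz in $H$ one has $|\langle u'(t),v(t)\rangle_H|\leq \|u'(t)\|_H\|v(t)\|_H$ pointwise a.e., and since $u',v\in L^2((a,b),H)$ the product $\|u'(\cdot)\|_H\|v(\cdot)\|_H$ belongs to $L^1((a,b),\R)$ by the scalar Cauchy--Schwarz inequality. Hence $\langle u'(\cdot),v(\cdot)\rangle_H\in L^1((a,b),\K)$, and symmetrically for $\langle u(\cdot),v'(\cdot)\rangle_H$.

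\smallskip\noindent\textbf{Step 2 (Bounded case $-\infty<a<b<\infty$).} I would prove that the scalar function $f(t):=\langle u(t),v(t)\rangle_H$ lies in $H^1((a,b),\K)$ with weak derivative $t\mapsto \langle u'(t),v(t)\rangle_H+\langle u(t),v'(t)\rangle_H$; the formula then follows from the scalar Newton--Leibniz formula for $H^1$. To verify this, pick mollifiers $\rho_\varepsilon$ (or equivalently use that $\mathcal{C}^\infty([a,b],H)$ is dense in $H^1((a,b),H)$, proved by convolution after Bochner extension across the endpoints). Pick sequences $u_n,v_n\in\mathcal{C}^\infty([a,b],H)$ with $u_n\to u$ and $v_n\to v$ in $H^1((a,b),H)$. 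For smooth $H$-valued functions, the product rule $\frac{d}{dt}\langle u_n(t),v_n(t)\rangle_H=\langle u_n'(t),v_n(t)\rangle_H+\langle u_n(t),v_n'(t)\rangle_H$ is immediate from the definition of the (strong) derivative and continuity of the inner product. Integrating and using the classical fundamental theorem of calculus yields
\begin{equation*}
\int_a^b\!\langle u_n',v_n\rangle_H\,dt+\int_a^b\!\langle u_n,v_n'\rangle_H\,dt=\langle u_n(b),v_n(b)\rangle_H-\langle u_n(a),v_n(a)\rangle_H.
\end{equation*}
The left-hand side passes to the limit by Cauchy--Schwarz applied to $u_n-u$, $v_n-v$; the right-hand side passes to the limit by the continuous embedding $H^1((a,b),H)\hookrightarrow\mathcal{C}([a,b],H)$ from Lemma~\ref{lem:6.1}~(a), which gives uniform convergence of $u_n,v_n$ to $u,v$.

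\smallskip\noindent\textbf{Step 3 (Unbounded cases).} For case (b) ($a$ finite, $b=\infty$) apply Step~2 on $[a,n]$ for each $n$ to obtain
\begin{equation*}
\int_a^{n}\!\langle u',v\rangle_H\,dt+\int_a^{n}\!\langle u,v'\rangle_H\,dt=\langle u(n),v(n)\rangle_H-\langle u(a),v(a)\rangle_H.
\end{equation*}
The two integrals converge as $n\to\infty$ by Step~1 together with dominated convergence. Lemma~\ref{lem:6.1}~(b) gives $u(n),v(n)\to 0$ in $H$, so the boundary term at $n$ vanishes in the limit, producing the stated identity. Cases (c) and (d) are treated identically, truncating to $[-n,b]$ and $[-n,n]$ respectively and using parts (c), (d) of Lemma~\ref{lem:6.1} to kill the corresponding boundary contributions.

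\smallskip\noindent\textbf{Main obstacle.} The only substantive point is the density of $\mathcal{C}^\infty$ in the vector-valued Sobolev space $H^1((a,b),H)$ used in Step~2; this requires an extension-plus-mollification argument in the Bochner setting, where one must check that convolution commutes with weak differentiation for $H$-valued functions. Once this is secured, every other ingredient (product rule, Cauchy--Schwarz, dominated convergence, boundary vanishing) is routine.
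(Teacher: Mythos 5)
Your proof is correct. Note that the paper itself gives no proof of Lemma~\ref{lem:6.2}: it simply cites Corollary~8.9 and 8.10 of Brezis's book (together with the remark before Lemma~\ref{lem:6.1} that the scalar arguments carry over to the vector-valued setting). Your argument is precisely the standard one behind that citation, transplanted to the Bochner setting: Cauchy--Schwarz for the $L^1$ statement (correctly read as scalar-valued $L^1$, despite the statement's typo ``$L^1((a,b),H)$''), density of $\mathcal{C}^\infty([a,b],H)$ in $H^1((a,b),H)$ via extension and mollification plus the product rule and the continuous embedding into $\mathcal{C}([a,b],H)$ for the bounded case, and truncation combined with dominated convergence and the decay at infinity from Lemma~\ref{lem:6.1} for the three unbounded cases. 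All limit passages are justified as you describe, and the one point you flag as the ``main obstacle'' (that convolution commutes with weak differentiation for $H$-valued functions) is indeed the only place where anything beyond the scalar theory is needed; it follows by testing against scalar test functions and Fubini for Bochner integrals. So there is nothing to compare against in the paper; your write-up supplies a complete proof where the authors rely on a reference.
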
   
\begin{exam}\label{ex:6.3}
Let $V=L^2((a,b),H)$, $\dom(A_0)={\mathcal C}_c^\infty ((a,b),H)$, $A_0u =u'$. Then $A_0$ is skew-symmetric with dense domain. Let $B$ be an operator on $V$ such that $A_0\subset B$. \\
\begin{itemize}
	\item[a)] Assume that $-\infty <a<b<\infty$. Then $B$ is $m$-dissipative if and only if there exists  a contraction $\Phi\in{\mathcal L}(H)$ such that 
	  \[\dom(B)=\{ u\in H^1((a,b),H):\Phi u(a)=u(b) \}.\]
	  Moreover $B$ generates a unitary $C_0$-semigroup if and only if $\Phi$ is unitary.  
	\item[b)] Let $-\infty=a<b<\infty$. Then $B$ is $m$-dissipative if and only if $\dom(B)=\{u\in H^1(-\infty,b),H):u(b)=0\} $. In that case the semigroup $(S(t))_{t\geq 0}$ generated by $B$ is given by $(S(t)f)(x)=f(x+t)$ if $x+t\leq b$ and $(S(t)f)(x)=0$ if $x+t> b$. This $B$ is the only generator of a $C_0$-semigroup which is an extension of $A_0$. 
	\item[c)] Let $-\infty<a<b=\infty$. Then $B$ is $m$-dissipative if and only if $\dom(B)=H^1((a,\infty),H)$. In that case the semigroup $(S(t))_{t\geq 0}$ generated by $B$ is given by 
	\[  (S(t)f)(x)=f(x+t)\mbox{ for }x\in (a,\infty),\, t>0,\, f\in V.\]
	There are infinitely many other non-contractive $C_0$-semigroups having as generator an extension of $A_0$.  
	\item[d)] $-\infty=a,b=\infty$. Then $\dom(\overline{A_0})=H^1(\R,H)$ and $\overline{A_0}$ generates a unitary $C_0$-group given by $(U(t)f)(x)=f(x+t)$.  
\end{itemize}
\begin{proof}
A boundary quadruple $(\Hm,\Hp,\Gm  ,\Gp )$ is given by $\Hm=\Gm\dom(A)$ and $\Hp=\Gp\dom(A))$ where : \\
$\Gp   u=u(b)$ and $\Gm  u=u(a)$ in the case $a)$,\\
$\Gm  =0$ and  	$\Gp   u=u(b)$  in the case $b)$, \\
$\Gp  =0$ and  	$\Gm   u=u(a)$  in the case $c)$, \\
$\Gm  =\Gp  =0$ in the case $d)$. 

Now $a)$ follows from Theorem~\ref{th:3.9}, $b)$ from Proposition~\ref{prop:3.13}, $c)$ from Proposition \ref{prop:3.12} and $d)$ from Theorem~\ref{th:5.5}.  
\end{proof}
\end{exam}
Next we give an example where arbitrary closed subspaces $H_1,H_2$ of $H$ occur as $\Gm   \dom(A)$ and $\Gp  \dom(A)$.  
\begin{exam}\label{ex:6.4}
	Let $H$ be a Hilbert space over $\K=\R$ or $\C$, and let $H_1,H_2$ be closed subspaces of $H$. We construct  a densely defined symmetric operator $A_0$ on a  space $V$ and  $(\Hm,\Hp,\Gm  ,\Gp  )$, a boundary quadruple, such that $\Hp  =H_2$, $\Hm =H_1  $. For that we let $-\infty<a<c<\infty$, 
	\[V= \{   u\in L^2((a,b),H):u(t)\in H_1,\,t\in (a,c)\mbox{ and }u(t)\in H_2,\,t\in (c,d) \}.     \]  
	This is a closed subspace of $L^2((a,b),H)$. Define the operator $A_0$ on $V$ by 
	\[ \dom(A_0):=\{ u\in H_0^1((a,b),H):u(t)\in H_1,\,t\in [a,c], \,\, u(t)\in H_2,\,\, t\in [c,b]  \}.\]
	Thus $u\in{\mathcal C}([a,b],H)$ and $u(a)=u(b)=0$, $u(c)\in H_1\cap H_2$ for each $u\in \dom(A_0)$. Define $A_0 u=u'\in V$. Then $\dom(A_0)$ is dense since 
	\[H_0^1((a,c),H_1)\oplus H_0^1((c,d),H_2)\subset \dom(A_0)\]
	 and $H_0^1((a,c),H_1)$ is dense in $L^2((a,c),H_1)$ and $H_0^1((c,b),H_2)$ is dense in $L^2((c,b),H_2)$. The operator $A_0$ is symmetric. This is a consequence of Lemma~\ref{lem:6.2} since $\lim_{x\to c,x<c}v(x)=\lim_{x\to C,x>c} v(x)$ for all $v\in\dom(A_0)$. Moreover, $A=-A_0^* $ is given by: 
	 \[ \dom(A)=\{  u\in H^1((a,b),H):u(t)\in H_1, \, t\leq c\mbox{ and }u(t)\in H_2,\, t\geq c\}\] and $Au=u'$. 
	 
	 Define $\Gp  ,\Gm  :\dom(A)\to H$ by $\Gp  u=u(b)$ and $\Gm  u=u(a)$. Then we see that $(\Gm \dom(A),\Gp\dom(A),\Gm  ,\Gp  )$ is a  boundary quadruple for $A_0$ and $\Gp  \dom(A)=H_2$, $\Gm  \dom(A)=H_1$. We omit the details of the proof.     
	 \end{exam}
Next we consider the second derivative. Let $-\infty<a<b<\infty$, $V=L^2((a,b),H)$. By 
\[ H^2((a,b),H):=\{    u\in H^1((a,b),H):u'\in H^1((a,b),H)\} \]
we define the Sobolev space of second order.  
 It follows from Lemma~\ref{lem:6.1} that 
\[  H^2((a,b),H)\subset {\mathcal C}^1([a,b],H).\]
We define the operators $\Gm,\Gp\in {\mathcal L}(H^2(a,b;H),H\times H )$ by 
\begin{equation}\label{eq:6.1}
	\Gm u=\frac{\sqrt{2}}{2} (u(a)-iu'(a)), u(b)+iu'(b))\in H\times H
\end{equation} 	
and 
\begin{equation}\label{eq:6.2}
	\Gp u=\frac{\sqrt{2}}{2} (u(a)+iu'(a)), u(b)-iu'(b))\in H\times H.
\end{equation} 	
We define the symmetric operator $S$ on $V=L^2(a,b;H)$ by 
\[Sv=v''\mbox{ and }\dom(S):={\mathcal C}_c^\infty(a,b;H).\]
\begin{thm}\label{th:6.1}
	Let $T$ be an operator such that 
	\[  {\mathcal C}_c^\infty ((a,b),H)\subset \dom(T)\subset H^2((a,b),H)\mbox{ and }Tv=v'',\,\,v\in\dom(T).\]
	The following assertions  are equivalent:
	\begin{itemize}
		\item[(i)] $iT$ is $m$-dissipative;
		\item[(ii)]there exists a contraction $\Phi\in {\mathcal L}(H\times H)$ such that
		\begin{equation}\label{eq:6.3}
			\dom(T)=\{ v\in H^2((a,b),H):\Gp v=\Phi \Gm v\}.
		\end{equation}  
	\end{itemize}
\end{thm}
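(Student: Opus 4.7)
The plan is to obtain Theorem~\ref{th:6.1} as a direct application of Theorem~\ref{th:3.9}, specialized to the densely defined skew-symmetric operator $A_0:=iS$ on $V=L^2((a,b),H)$ equipped with the boundary quadruple coming from the maps $\Gamma_-,\Gamma_+$ of \eqref{eq:6.1}-\eqref{eq:6.2}. Skew-symmetry of $A_0$ follows from two applications of Lemma~\ref{lem:6.2}, since boundary contributions vanish on $\mathcal{C}_c^\infty((a,b),H)$, and a routine duality argument identifies $A:=-A_0^*=iS^*$ as $v\mapsto iv''$ on $\dom(A)=H^2((a,b),H)$.

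The main step is to verify that $(H\oplus H,\,H\oplus H,\,\Gamma_-,\,\Gamma_+)$ is a boundary quadruple for $A_0$. For the boundary identity \eqref{eq:3.2}, two applications of Lemma~\ref{lem:6.2} give
\[
\langle Au,v\rangle_V+\langle u,Av\rangle_V = i\bigl[\langle u'(b),v(b)\rangle_H - \langle u(b),v'(b)\rangle_H - \langle u'(a),v(a)\rangle_H + \langle u(a),v'(a)\rangle_H\bigr],
\]
and the same expression is obtained by algebraic expansion of $\langle\Gamma_+ u,\Gamma_+ v\rangle-\langle\Gamma_- u,\Gamma_- v\rangle$ using the sesquilinear identity $\langle u\pm iu',v\pm iv'\rangle_H=\langle u,v\rangle_H\mp i\langle u,v'\rangle_H\pm i\langle u',v\rangle_H+\langle u',v'\rangle_H$ applied at each endpoint. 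For \eqref{eq:3.4n} and \eqref{eq:3.3}, I appeal to Remark~\ref{rem:3.3} and verify the interpolation property: given $x_-,x_+\in H\oplus H$, the system $\Gamma_- u = x_-$, $\Gamma_+ u = x_+$ decouples into an invertible $2\times 2$ linear system at each endpoint in the unknowns $\bigl(u(a),u'(a)\bigr)$ and $\bigl(u(b),u'(b)\bigr)$, and any prescribed quadruple of boundary values can then be realized by a single $H$-valued cubic polynomial, which lies trivially in $H^2((a,b),H)$.

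With the boundary quadruple thus established, $B:=iT$ is an extension of $A_0$ (because $\mathcal{C}_c^\infty((a,b),H)\subset\dom(T)\subset H^2((a,b),H)=\dom(A)$ and $iTv=iv''=Av$). Theorem~\ref{th:3.9} then yields the claimed equivalence: $iT$ is $m$-dissipative if and only if there is a contraction $\Phi\in\mathcal{L}(H\oplus H)$ such that $\dom(T)=\{v\in H^2((a,b),H):\Phi\Gamma_- v=\Gamma_+ v\}$, which is precisely \eqref{eq:6.3}.

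The principal difficulty is the sesquilinear bookkeeping required in the verification of \eqref{eq:3.2}, where one must track the complex-conjugation flip $\overline{i}=-i$ in the second slot of the inner product and systematically combine the contributions at $a$ and $b$; the remainder reduces to elementary Sobolev-space properties and a direct invocation of the general theorem.
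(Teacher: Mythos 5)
Your strategy coincides with the paper's: show that $(H\times H,\,H\times H,\,\Gm,\Gp)$ with \eqref{eq:6.1}--\eqref{eq:6.2} is a boundary quadruple for $iS$ and then quote Theorem~\ref{th:3.9}. Your surjectivity argument (decoupling into invertible $2\times 2$ systems at each endpoint and realizing the four boundary values by an $H$-valued cubic) is a clean variant of the paper's interpolation with $\mathcal{C}^2$ functions, and the integration-by-parts identity is the same computation the paper performs (and partly omits).

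There is, however, one step you dismiss too quickly: the claim that ``a routine duality argument identifies $\dom(A)=\dom(S^*)$ as $H^2((a,b),H)$.'' Duality only shows that $v\in\dom(S^*)$ if and only if there is $f\in L^2((a,b),H)$ with $\int_a^b v\,w''=\int_a^b f\,w$ for all real test functions $w$, i.e.\ $v''\in L^2$ in the distributional sense; it gives no information about $v'$. The passage from ``$v,v''\in L^2$'' to ``$v\in H^2$'' is a genuine one-dimensional regularity statement, which the paper isolates as Lemma~\ref{lem:6.3}, proves by an explicit construction of primitives, and flags as failing for the Laplacian in dimension $\ge 2$. You need exactly $\dom(A)=H^2((a,b),H)$ (not merely the inclusion $H^2\subset\dom(A)$) to match the abstract domains $\dom(A_\Phi)\subset\dom(A)$ of Theorem~\ref{th:3.9} with the sets appearing in \eqref{eq:6.3}, so this lemma must be supplied or cited. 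A secondary caution: when you carry out the sesquilinear bookkeeping (with the inner product linear in the first slot, which is the paper's convention), the expansion of $\langle\Gp u,\Gp v\rangle-\langle\Gm u,\Gm v\rangle$ for \eqref{eq:6.1}--\eqref{eq:6.2} as printed comes out as the \emph{negative} of $i\bigl(\langle S^*u,v\rangle-\langle u,S^*v\rangle\bigr)$; the identity \eqref{eq:3.2} holds only after interchanging the roles of $\Gm$ and $\Gp$ (equivalently, replacing $i$ by $-i$ in their definitions). This does not affect the structure of your argument, but it does affect which contractions $\Phi$ parametrize the dissipative (rather than accretive) extensions, so it should not be waved through as a verification that ``the same expression is obtained.''
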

\begin{thm}\label{th:6.2}
	Let $T$ be an operator on $V$ such that $S\subset T$. The following assertions are equivalent:
	\begin{itemize}
		\item[(i)] $T$ is self-adjoint;
		\item[(ii)] there exists a unitary operator $\Phi\in {\mathcal L}(H\times H)$ such that 
		\[  \dom(T)=\{  v\in H^2((a,b),H): \Phi \Gm v=\Gp v\}\mbox{ and } Tv=v'',\,\, v\in \dom(T). \]  
	\end{itemize}
\end{thm}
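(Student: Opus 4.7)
The plan is to apply Theorem~\ref{th:5.5} directly to $S$ using the concrete pair $(\Gm,\Gp)$ from \eqref{eq:6.1}--\eqref{eq:6.2}, which targets the same space $H\times H$; consequently the isomorphism hypothesis of Theorem~\ref{th:5.5} is automatic, and a unitary $\Phi\in\mathcal{L}(H\times H)$ satisfying $\Phi\Gm v=\Gp v$ is, after replacing $\Phi$ by $\Phi^{-1}$, exactly the object furnished by Theorem~\ref{th:5.5} (where the condition reads $\Psi\Gp w=\Gm w$ with $\Psi:\Hp\to\Hm$ unitary). The whole proof then reduces to verifying that $(H\times H,H\times H,\Gm,\Gp)$ is a boundary quadruple for $S$ in the sense of Definition~\ref{def:5.2}.

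The first verification is that $S^*v=v''$ on $\dom(S^*)=H^2((a,b),H)$; this is a standard duality argument combined with the vector-valued integration-by-parts formula in Lemma~\ref{lem:6.2}. Next, given $u,v\in H^2((a,b),H)$, applying Lemma~\ref{lem:6.2} twice to $\langle S^*u,v\rangle_V-\langle u,S^*v\rangle_V$ yields the boundary-term expression
\[\langle u'(b),v(b)\rangle_H-\langle u(b),v'(b)\rangle_H-\langle u'(a),v(a)\rangle_H+\langle u(a),v'(a)\rangle_H,\]
and a direct expansion of $\langle \Gm u,\Gm v\rangle_{H\times H}-\langle\Gp u,\Gp v\rangle_{H\times H}$ reproduces exactly $1/i$ times this expression: the real, symmetric contributions from $(u(a),u'(a))$ and $(u(b),u'(b))$ cancel pairwise, while the imaginary cross terms survive with the correct signs. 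This establishes the abstract Green identity \eqref{eq:5.1}.

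For the remaining structural requirements, surjectivity of both $\Gm$ and $\Gp$ onto $H\times H$ follows from the fact that any prescribed quadruple $(u(a),u'(a),u(b),u'(b))\in H^4$ can be realised by an $H$-valued cubic Hermite polynomial, and the induced linear maps to $\Gm u,\Gp u$ are clearly surjective. For the decomposition $\ker\Gm+\ker\Gp=H^2((a,b),H)$, given $u\in H^2$ I would solve the elementary $2\times 2$ linear system at each endpoint that splits $(u(a),u'(a))$ into components compatible with $\ker\Gm$ (where $u_1(a)=iu_1'(a)$) and $\ker\Gp$ (where $u_2(a)=-iu_2'(a)$), and analogously at $b$; cubic interpolants $u_1,u_2$ carrying these boundary data, corrected by any element of $H^2_0((a,b),H)\subset\ker\Gm\cap\ker\Gp$ absorbing the residual, deliver the required splitting.

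With the boundary quadruple in hand, Theorem~\ref{th:5.5} yields the statement verbatim. The main technical obstacle is the first step---the identification $\dom(S^*)=H^2((a,b),H)$ in the vector-valued setting---which requires a careful $H^2$-regularity argument starting from the distributional identity $\langle u,\varphi''\rangle_V=\langle g,\varphi\rangle_V$ for all $\varphi\in\mathcal{C}_c^\infty((a,b),H)$; the verification of the Green identity and of the decomposition condition are then routine sesquilinear algebra and explicit polynomial interpolation.
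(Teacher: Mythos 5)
Your proposal is correct and takes essentially the same route as the paper: you verify that $(H\times H,\,H\times H,\,\Gm,\Gp)$ with $\Gm,\Gp$ from \eqref{eq:6.1}--\eqref{eq:6.2} is a boundary quadruple for $S$ in the sense of Definition~\ref{def:5.2} (identification $\dom(S^*)=H^2((a,b),H)$, the Green identity by expanding the inner products so that the symmetric terms cancel and the cross terms survive, and $\ker\Gm+\ker\Gp=H^2((a,b),H)$ via explicit interpolants with prescribed endpoint data), and then invoke Theorem~\ref{th:5.5}, correctly noting that the unitary must be inverted to match the orientation of the boundary condition. The one step you assert rather than prove --- that $u\in L^2$ with distributional second derivative in $L^2$ lies in $H^2((a,b),H)$ --- is precisely the paper's Lemma~\ref{lem:6.3}, so you have correctly isolated the only genuinely technical ingredient.
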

We find the three classical cases.\\
a) \emph{Neumann boundary conditions:} $v'(a)=v'(b)=0$ for $\Phi=\Id$. \\
b) \emph{Dirichlet boundary conditions:} $v(a)=v(b)=0$ for $\Phi=-\Id$.\\
c) \emph{Periodic boundary conditions:} $v(a)=v(b)$, $v'(a)=v'(b)$ for \[\Phi=\begin{pmatrix}
	0 & \Id_H\\
	\Id_H & 0 
\end{pmatrix} .\]
In each of these three cases the operator $T$ given by $Tv=v''$ with the domain \eqref{eq:6.3} is self-adjoint. \\
d) \emph{Robin boundary conditions:} $(\Id_H-\Phi_1)u(a)=i(\Id_H +\Phi_2)u'(a)$, \\$(\Id_H-\Phi_2)u(b)=i(\Id_H -\Phi_2)u'(b)$ for  \[\Phi=\begin{pmatrix}
	\Phi_1 & 0\\
	0 & \Phi_2 
\end{pmatrix} . \]
If $\Phi_1,\Phi_2\in {\mathcal L}(H)$ are contractions then the operator $iT$ with domain  \eqref{eq:6.3} given by 
$iTv=iv''$ is $m$-dissipative. If $\Phi_1,\Phi_2\in {\mathcal L}(H)$ are unitary, then the operator $T$ with domain \eqref{eq:6.3} given by $Tv=v''$ is self-adjoint.  

For the proof of Theorem~\ref{th:6.1} and Theorem~\ref{th:6.2} we need the following.
\begin{lem}\label{lem:6.3}
	Let $u\in L^2((a,b),H)$ such that $u''\in L^2((a,b),H)$; i.e. 
	\[  \int_a^b u(t)w''(t)dt =\int_a^bu''(t)w(t)dt\mbox{ for all }w\in {\mathcal C}_c^\infty((a,b),\R).\]
	Then $u\in H^2((a,b),H)$.  
\end{lem}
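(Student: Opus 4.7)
The plan is to produce an explicit antiderivative of $u''$ and then invoke the vector-valued analogue of ``a distribution with vanishing derivative is constant'' in order to upgrade the $L^2$-regularity of $u''$ to an $L^2$-bound on $u'$.

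First, since $u''\in L^2((a,b),H)\subset L^1((a,b),H)$ on the bounded interval $(a,b)$, I would define $g(t):=\int_a^t u''(s)\,ds$ for $t\in[a,b]$; by standard vector-valued calculus (Lemmas~\ref{lem:6.1} and \ref{lem:6.2} applied componentwise), $g\in \mathcal{C}([a,b],H)$ and in fact $g\in H^1((a,b),H)$ with $g'=u''$ in the distributional sense. Viewing $u$ as an $H$-valued distribution on $(a,b)$, set $f:=u'-g$, so that $f$ is a distribution on $(a,b)$ with values in $H$ satisfying $f'=u''-g'=0$.

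Next, I would show that $f$ is constant, i.e.\ that there exists $c\in H$ such that $f(\varphi)=c\int_a^b\varphi(t)\,dt$ for every $\varphi\in \mathcal{C}_c^\infty((a,b),\R)$. Fixing $\psi_0\in \mathcal{C}_c^\infty((a,b),\R)$ with $\int_a^b\psi_0=1$, any $\varphi\in \mathcal{C}_c^\infty((a,b),\R)$ can be written as $\varphi=(\int_a^b\varphi)\psi_0+\chi'$ with $\chi\in \mathcal{C}_c^\infty((a,b),\R)$ (because $\varphi-(\int\varphi)\psi_0$ has zero integral), and one computes
\[ f(\varphi)=\Big(\int_a^b\varphi\Big)\,f(\psi_0)+f(\chi')=\Big(\int_a^b\varphi\Big)\,f(\psi_0), \]
since $f(\chi')=-f'(\chi)=0$; hence $c:=f(\psi_0)\in H$ works. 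It follows that $u'=g+c$ as distributions, and the right-hand side is continuous, hence in $L^2((a,b),H)$. Therefore $u\in H^1((a,b),H)$; moreover $(u')'=g'=u''\in L^2((a,b),H)$, so $u'\in H^1((a,b),H)$ and $u\in H^2((a,b),H)$ by definition.

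The main subtlety is the vector-valued version of ``a distribution with zero derivative is constant.'' The scalar argument above transfers verbatim, since the test functions $\varphi,\psi_0,\chi$ remain scalar and only linearity of $f$ in its $H$-valued argument is used; equivalently, one may pair with an arbitrary continuous linear functional on $H$ and reduce to the classical scalar statement before using Hahn--Banach (or Riesz in the Hilbert setting) to recover the $H$-valued constant. No genuinely new difficulty arises from working with values in $H$, so the whole argument is essentially a vector-valued reprise of the standard one-dimensional regularity lemma.
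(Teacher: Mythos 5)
Your proof is correct and follows essentially the same route as the paper: the paper likewise fixes a test function $\Psi\in\mathcal{C}_c^\infty((a,b),\R)$ with $\int_a^b\Psi=1$, decomposes an arbitrary $w$ as $\bigl(\int_a^b w\bigr)\Psi+\varphi'$, and thereby derives $-u'(s)=c+\int_s^b u''(t)\,dt$, which is precisely your statement that $u'$ differs from an antiderivative of $u''$ by a constant vector. The only difference is organizational: you isolate the vector-valued ``zero distributional derivative implies constant'' lemma as a separate step, whereas the paper carries out the corresponding computation inline.
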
 
\begin{proof}
	We fix $\Psi\in {\mathcal C}_c^\infty((a,b),\R)$ such that $\int_a^b\Psi(t)dt=1$. Let $w\in {\mathcal C}_c^\infty((a,b),\R)$. Then $\widetilde{w}:=w-\left( \int_a^b w(t)dt\right) \Psi\in {\mathcal C}_c^\infty((a,b),\R)$ and 
	$\int_a^b \widetilde{w}(t)dt=1$.  Thus $\varphi(t):=\int_a^t \widetilde{w}(s)ds$ defines $\varphi\in {\mathcal C}_c^\infty((a,b),\R)$ and $\varphi'=\widetilde{w}$, $w'=\varphi'' + \left( \int_a^b w(t)dt\right) \Psi'$. It follows from 
	the hypothesis that 
	\begin{eqnarray*}
		\int_a^b u(t)w'(t)dt & = & \int_a^b u''(t)\varphi(t)dt + \int_a^b w(t)dt\int_a^bu(t)\Psi'(t)dt\\
		& = & \int_a^b u''(t)\int_a^tw(s)ds -\left( \int_a^bu''(t)\Psi(t)dt\right) \int_a^b w(t)dt  \\
		&  & 	 + \int_a^b w(t)dt \int_a^b u(t)\Psi'(t)dt\\
		& = & \int_a^b \int_s^b u''(t)dt w(s)ds + \int_a^b cw(s)ds,
	\end{eqnarray*} 
	where $c=-\int_a^b u''(t)\Psi(t)dt + \int_a^b u(t)\Psi'(t)dt$.  Since $w\in {\mathcal C}_c^\infty(a,b;\R)$ is arbitrary, this implies that $u\in H^1((a,b),\C)$ and 
	\[ - u'(s)=c +\int_s^b u''(t)dt\mbox{ for  almost all $s$. }\]
	Thus $u'\in H^1((a,b),H)$ and consequently $u\in H^2((a,b),\C)$. 
\end{proof}
\begin{rem}
	Lemma~\ref{lem:6.3} is false in higher dimensions. Indeed, there exists $u\in{\mathcal C}(\overline{\D})\cap {\mathcal C}^2(\D)$ such that $\Delta u=0$ but $u\notin H^1(\D)$, where $\D$ is the open unit disc in $\R^2$. See 
	\cite[Example 6.68]{AU22}, a famous example due to Hadamard. 
\end{rem}
\begin{proof}[Proof of Theorem~\ref{th:6.1} and ~\ref{th:6.2}] 
	It follows from Lemma~\ref{lem:6.3} that \[\dom(S^*)=H^2((a,b),H)\] and $S^*u=u''$ for all $u\in H^2((a,b),H)$. For $u,v\in H^2((a,b),H)$, integrating by parts yields 
	\begin{eqnarray*}
		b(u,v) & :=& i\left( \langle S^* u,v\rangle_V -\langle u, S^* v\rangle_V\right)\\
		& = & i\left( \int_a^b \langle u''(t)',v(t)\rangle_H dt)  -\int_a^b\langle u(t),v''(t)\rangle_H dt\right)\\
		& = & i\left( \langle u'(b),v(b)\rangle_H -\langle u'(a),v(a)\rangle_H-\langle u(b),v'(b)\rangle_H\right) +\\
		&  &  i\langle u(a),v'(a)\rangle_H\\
		&  = & \langle \Gp u,\Gp v\rangle_{H\times H} - \langle \Gm u,\Gm v\rangle_{H\times H},
	\end{eqnarray*}
	where $\Gm, \Gp\in {\mathcal L}(H^2((a,b),H),H\times H)$ are defined by \eqref{eq:6.1} and 	\eqref{eq:6.2}. This is a computation that we omit.  We show that 
	\begin{equation}\label{eq:6.4}
		\ker \Gm +\ker \Gp =H^2((a,b),H).
	\end{equation} 
	Let $u\in H^2((a,b),H)$. Choose $\Psi,\Phi\in {\mathcal C}^2([a,b],H)$ such that 
	\[   \varphi(a)=-iu'(a),\,\, \varphi'(a)=iu(a),\,\, \varphi(b)=\varphi'(b)=0\]
	and 
	\[ \Psi(b)=-iu'(b),\,\, \Psi'(b)=iu(b),\,\, \Psi(a)=\Psi'(a)=0.\]
	Then 
	\[ \frac{1}{2} (u+\varphi-\Psi)\in \ker \Gm\mbox{ and }\frac{1}{2} (u-\varphi+\Psi)\in \ker \Gp . \]
	Thus the sum $u$ is in $\ker \Gm+\ker \Gp$. Now, Theorem~\ref{th:6.1} is a direct consequence of Theorem~\ref{th:3.9}, and Theorem~\ref{th:6.2} follows from Theorem~\ref{th:5.5}.  	
\end{proof}  
\section{The wave equation}\label{sec:6,5}
In this section we treat the wave equation in terms of quadruples. We are most grateful to Nathanael Skrepek who informed us on the papers \cite{KZ,KZ18} by Kurula and Zwart,    where boundary triples are used for similar, but different results. We refer to \cite{Skr} for further results on the Maxwell equations. Let $\Omega\subset \R^d$ be a bounded, open set with Lipschitz boundary. We consider the skew-symmetric operator $A_0$ on $V:=L^2(\Omega)\times L^2(\Omega)^d$ given by 
\[  \dom(A_0) =\{  (u_1,u_2):u_1\in {\mathcal C}_c^\infty(\Omega),u_2\in {\mathcal C}_c^\infty (\Omega)^d\} \]
 and
 \[ A_0(u_1,u_2)=(\di u_2,\nabla u_1). \]
 Then $A:=(-A_0)^*$ is given by 
 \[\dom(A)=H^1(\Omega)\times H_{\di}(\Omega),\,\, A(u_1,u_2)=(\di u_2,\nabla u_1),\]
where $H_{\di}(\Omega):=\{  u\in L^2(\Omega):\di u\in L^2(\Omega)\}. $ 
By $\Gamma:=\partial \Omega$ we denote the boundary of $\partial \Omega$ and by $L^2(\Gamma)$ the Lebesgue space with respect to the surface measure. There exists a unique operator $\tr \in {\mathcal L}(H^1(\Omega),L^2(\Gamma)) $ such that $\tr u=u_{|\Gamma}$ if $u\in H^1(\Omega)\cap {\mathcal C}(\overline{\Omega})$. We write  $u_{\Gamma}:=\tr u$ for all $u\in H^1(\Omega)$, and call $u_{\Gamma}$ the \emph{trace} of $u$. Then $\ker \tr =H_0^1(\Omega)$, the closure of ${\mathcal C}_c^\infty (\Omega)$ in $H^1(\Omega)$. 

The space   $H^{1/2}(\Gamma):=\tr H^1(\Omega)$ is a Hilbert space for the  following norm. Let $g\in H^{1/2}(\Gamma)$. Then there exists a unique 
$u\in (\ker \tr )^\perp={H_0^1(\Omega)}^\perp$ such that $u_\Gamma=g$. We let $\|g\|_{H^{1/2}(\Gamma)}:=\|u\|_{H^1(\Omega)}$. Thus 
	\[  \|g\|_{H^{1/2}(\Gamma)}=\inf  \{  \|v\|_{H^1(\Omega)} :v\in H^1(\Omega), \tr v=g\}. \]   
Let $u\in H_{\di} (\Omega)$. Then 
	 
	 \[   \int_\Omega \di u\overline{v} + \int_\Omega u\cdot \overline{\nabla v} =0\]
	 for all $v\in H^1_0(\Omega)$. Thus there exists a unique functional $\nu \cdot u\in H^{-1/2}(\Gamma)=H^{1/2}(\Gamma) '$ defined by  
\[ \langle \nu \cdot u,v_{\Gamma} \rangle:=\int_{\Omega} \di u \overline{v} + \int_{\Omega} u\cdot \overline{\nabla v},\]
for all $v\in H^1(\Omega)$. Here $\nu$ stands symbolically for the outer normal (which is a function in $L^\infty (\Gamma)$).  If $u\in H^1(\Omega)$ such that $\Delta u\in L^2(\Omega)$, then $\nabla u\in H_{\di}(\Omega)$ and we let $\partial_{\nu} u:=\nu \cdot \nabla u\in H^{-1/2}(\Omega)$. Thus 
\[  \langle \partial_{\nu} u,v_{\Gamma}\rangle =\int_{\Omega}\Delta u \,v +\int_{\Omega}\nabla u \, \nabla v  \]
for all $v\in H^1(\Omega)$.  

Denote by $R:H^{-1/2}(\Gamma)\to H^{1/2}(\Gamma)$ the Riesz isomorphism defined by 
\[  \langle \varphi , v_{\Gamma}\rangle_{H^{-1/2},H^{1/2}}=\langle R\varphi,v_{\Gamma}\rangle_{H^{1/2}} \]
for all $\varphi\in  H^{-1/2}(\Gamma)$, $v\in H^1(\Gamma)$. 

It is well-known and easy to see that the continuous, linear mapping
\begin{equation}\label{eq:6.5.1}
	u\in H_{\di}(\Omega)\mapsto R(\nu\cdot u)\in H^{1/2}(\Gamma)
\end{equation} 	 
is surjective. Now we can formulate the main result of this section. 
 \begin{thm}\label{th:6.5.1}
 Let $\Hm=\Hp:=H^{1/2}(\Gamma)$ and define \[ \Gm,\Gp:\dom(A)=H^1(\Omega)\times H_{\di}(\Omega)\to H^{1/2}(\Gamma)\]
  by  
  \[ \Gp (u_1,u_2)=\frac{1}{2} u_{1 \Gamma} + R(\nu\cdot u_2)\mbox{ and } \Gm (u_1,u_2)=\frac{1}{2} u_{1 \Gamma} - R(\nu\cdot u_2). \]
  Then $(\Hm,\Hp,\Gm,\Gp)$ is a boundary quadruple for $A_0$. 
 \end{thm}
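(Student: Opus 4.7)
The plan is to verify the three conditions of Definition~\ref{def:3.1}, namely surjectivity \eqref{eq:3.4n}, the boundary form identity \eqref{eq:3.2}, and the sum decomposition \eqref{eq:3.3}. The key analytic input is already packaged into two facts quoted above: the integration-by-parts definition of $\nu\cdot u_2\in H^{-1/2}(\Gamma)$, and the surjectivity of the map $u_2\mapsto R(\nu\cdot u_2)$ in \eqref{eq:6.5.1}.

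First I would establish the boundary form identity. For $u=(u_1,u_2),v=(v_1,v_2)\in\dom(A)$, expanding the inner products gives
\[
\langle Au,v\rangle_V+\langle u,Av\rangle_V=\int_\Omega \di u_2\,\overline{v_1}+\int_\Omega u_2\cdot\overline{\nabla v_1}+\int_\Omega u_1\,\overline{\di v_2}+\int_\Omega \nabla u_1\cdot\overline{v_2}.
\]
The first two terms are exactly $\langle \nu\cdot u_2,v_{1\Gamma}\rangle_{H^{-1/2},H^{1/2}}$ by the defining relation of $\nu\cdot u_2$, while the last two terms give the complex conjugate of $\langle \nu\cdot v_2,u_{1\Gamma}\rangle_{H^{-1/2},H^{1/2}}$. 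Using the Riesz isomorphism $R$, this rewrites as $\langle R(\nu\cdot u_2),v_{1\Gamma}\rangle_{H^{1/2}}+\langle u_{1\Gamma},R(\nu\cdot v_2)\rangle_{H^{1/2}}$. On the other side, setting $a=\tfrac12 u_{1\Gamma}$, $b=R(\nu\cdot u_2)$ and similarly for $v$, a direct polarization gives
\[
\langle \Gp u,\Gp v\rangle_{\Hp}-\langle \Gm u,\Gm v\rangle_{\Hm}=\langle a+b,a'+b'\rangle-\langle a-b,a'-b'\rangle=2\langle a,b'\rangle+2\langle b,a'\rangle,
\]
which coincides with the previous expression. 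This proves \eqref{eq:3.2}.

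Next I would handle surjectivity and the sum decomposition simultaneously by establishing the interpolation property (Remark~\ref{rem:3.3}). Given any prescribed pair $x_-,x_+\in H^{1/2}(\Gamma)$, I choose $u_1\in H^1(\Omega)$ with $u_{1\Gamma}=x_++x_-$ (possible by surjectivity of the trace) and independently $u_2\in H_{\di}(\Omega)$ with $R(\nu\cdot u_2)=\tfrac{1}{2}(x_+-x_-)$ (possible by \eqref{eq:6.5.1}). Then $(u_1,u_2)\in\dom(A)$ satisfies $\Gp(u_1,u_2)=x_+$ and $\Gm(u_1,u_2)=x_-$. Taking $x_-$ arbitrary with $x_+=0$ (and vice versa) yields the surjectivity statement \eqref{eq:3.4n}, and by Remark~\ref{rem:3.3} the interpolation property is equivalent to \eqref{eq:3.3}.

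The only subtle point is bookkeeping in the complex case: one must check that the pairing $\langle\nu\cdot u_2,v_{1\Gamma}\rangle_{H^{-1/2},H^{1/2}}$ is conjugate-linear in the second slot in the same convention as the $H^{1/2}$-inner product used to define $R$, so that the identification through $R$ introduces no spurious conjugations. Once this is aligned, the whole proof is essentially a vector-valued Green's formula combined with surjectivity of traces and normal traces on a Lipschitz domain.
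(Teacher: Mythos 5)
Your proof is correct and follows essentially the same route as the paper: the boundary form identity is reduced via the defining relation of $\nu\cdot u_2$ and the Riesz map $R$ (the paper writes the same chain of equalities, while you make the polarization step explicit), and the joint surjectivity of $(\Gm,\Gp)$ is obtained exactly as in the paper by solving $u_{1\Gamma}=x_-+x_+$ and $R(\nu\cdot u_2)=\tfrac12(x_+-x_-)$ independently, then invoking Remark~\ref{rem:3.3}. No gaps.
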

Let $\Phi\in {\mathcal L}(H^{1/2}(\Gamma))$ be a contraction. We define the domain of the operator $B_\Phi$ on $V=L^2(\Omega)\times L^2(\Omega)^d$ 
by
\[  \dom(B_{\Phi})=\{  (u_1,u_2)\in H^1(\Omega)\times H_{\di}(\Omega):\Phi(\Gm(u_1,u_2))=\Gp(u_1,u_2)\},\]
where $ \Gm (u_1,u_2)=\frac{1}{2} u_{1 \Gamma} - R(\nu\cdot u_2)$ and $\Gp (u_1,u_2)=\frac{1}{2} u_{1 \Gamma} + R(\nu\cdot u_2)$. 
\begin{cor}\label{cor:6.5.2}
	Let $B$ be an operator on $V$ such that $A_0\subset B$. The following assertions are equivalent:
	\begin{itemize}
		\item[(i)] $B$ is $m$-dissipative;
		\item[(ii)] there exists a contraction $\Phi\in {\mathcal L}(H^{1/2}(\Gamma))$ such that $B=B_\Phi$.  
	\end{itemize}
\end{cor}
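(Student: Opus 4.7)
The plan is to derive this as a direct application of Theorem~\ref{th:3.9} once the boundary quadruple in Theorem~\ref{th:6.5.1} is in hand. Specifically, assuming Theorem~\ref{th:6.5.1}, we know that $(H^{1/2}(\Gamma), H^{1/2}(\Gamma), \Gm, \Gp)$ with $\Gm, \Gp$ as given is a boundary quadruple for the skew-symmetric operator $A_0$. In particular, by Proposition~\ref{prop:3.5} and Lemma~\ref{lem:3.4}, $\Hm = \Hp = H^{1/2}(\Gamma)$ is complete, and $\Gm, \Gp$ are continuous from $\dom(A) = H^1(\Omega) \times H_{\di}(\Omega)$ (with its graph norm) into $H^{1/2}(\Gamma)$.

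Next I would observe that the operator $B_\Phi$ defined in the corollary is literally the operator $A_\Phi$ of Section~\ref{sec:3} associated with the boundary quadruple of Theorem~\ref{th:6.5.1} and the contraction $\Phi \in {\mathcal L}(H^{1/2}(\Gamma))$. Indeed, unwinding the definitions,
\[
\dom(A_\Phi) = \{ w \in \dom(A) : \Phi \Gm w = \Gp w \} = \dom(B_\Phi),
\]
and on this domain $A_\Phi w = Aw = (\di u_2, \nabla u_1)$ for $w = (u_1,u_2)$, which agrees with the action of $B_\Phi$ (since $B_\Phi$ is implicitly an extension of $A_0$ and a restriction of $A$, forced by dissipativity via Theorem~\ref{th:2.3}).

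With this identification in place, the equivalence in the corollary is immediate from Theorem~\ref{th:3.9}: the implication (ii)$\Rightarrow$(i) is the dissipativity computation together with surjectivity of $\Id - A_\Phi$ established in that theorem, and (i)$\Rightarrow$(ii) produces the required contraction $\Phi$ via $\Phi \Gm w := \Gp w$ on $\Gm \dom(B)$, extended by continuity and by zero on the orthogonal complement, as in the proof of Theorem~\ref{th:3.9}. The uniqueness of $\Phi$ recorded in Proposition~\ref{prop:3.10} (and Corollary~\ref{cor:3.11}) then shows the parametrization $\Phi \mapsto B_\Phi$ is a bijection, although the statement only claims the equivalence.

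The only genuine work is Theorem~\ref{th:6.5.1} itself — namely verifying the boundary form identity \eqref{eq:3.2} (a Green-type integration by parts coupled with the Riesz isometry $R$), the surjectivity condition \eqref{eq:3.4n}, and the decomposition \eqref{eq:3.3}. Granting this, the corollary contains no further obstacle: it is a translation of the abstract parametrization into the concrete boundary data $(\tfrac12 u_{1\Gamma}, R(\nu\cdot u_2))$ for the first-order system associated with the wave equation.
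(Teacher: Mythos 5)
Your proof is correct and follows exactly the paper's route: the paper also deduces Corollary~\ref{cor:6.5.2} directly from Theorem~\ref{th:6.5.1} by recognizing $B_\Phi$ as the operator $A_\Phi$ of Section~\ref{sec:3} and invoking Theorem~\ref{th:3.9}. Your write-up simply makes explicit the identification that the paper leaves as a one-line remark.
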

In view of Theorem~\ref{th:3.9}, Corollary~\ref{cor:6.5.2} is a direct consequence of Theorem~\ref{th:6.5.1}. 
\begin{proof}[Proof of Theorem~\ref{th:6.5.1}]
It follows from the definition  of $\nu\cdot u_2$ for $u_2\in H_{\di} (\Omega)$ that for $u=(u_1,u_2)$, $v=(v_1,v_2)\in \dom(A)=H^1(\Omega)\times H_{\di}(\Omega)$, 
\begin{eqnarray*}  
	b(u,v) & = & \langle \nu\cdot u_2,v_{1\Gamma}\rangle_{H^{-1/2},H^{1/2}} + \langle \nu\cdot \overline{v_2},\overline{u_{1\Gamma}}\rangle_{H^{-1/2},H^{1/2}} \\
	 & = & \langle R(\nu\cdot u_2),v_{1\Gamma}\rangle_{H^{1/2}(\gamma)} + \langle u_{1\Gamma}, R(\nu\cdot {v_2}\rangle_{H^{1/2}(\Gamma)} \\
	  & = & \langle \Gp u,\Gp v\rangle_{H^{1/2}(\Gamma)} -\langle \Gm u,\Gm v\rangle_{H^{1/2}(\Gamma)}.
\end{eqnarray*}	
It remains to show that $G:=(\Gm,\Gp):\dom(A)\to H^{1/2}(\Gamma)\times H^{1/2}(\Gamma)$ is surjective. 

Let $\hm,\hp\in H^{1/2}(\Gamma)$. Using \eqref{eq:6.5.1} we find $u_1\in H^1(\Omega)$ such that $u_{1\Gamma}=\hm+\hp$, and 
$u_2\in H_{\di}(\Omega)$ such that $R(\nu\cdot u_2)=\frac{1}{2}(\hp -\hm)$. Thus 
\[  \Gp (u_1,u_2) =\frac{1}{2} u_{1\Gamma} + R(\nu\cdot u_2)=\frac{1}{2}(\hp +\hm) + \frac{1}{2}(\hp -\hm)=\hp \]
and 
\[  \Gm (u_1,u_2) =\frac{1}{2} u_{1\Gamma} - R(\nu\cdot u_2)=\frac{1}{2}(\hp +\hm) - \frac{1}{2}(\hp -\hm)=\hm. \]
By Remark~\ref{rem:3.3}, $(\Hm,\Hp,\Gm,\Gp)$ is a boundary quadruple. 
\end{proof}
We want to express Corollary~\ref{cor:6.5.2} in terms of a well-posedness result. Let $B:=B_\Phi$ where $\Phi:H^{1/2}(\Gamma)\to H^{1/2}(\Gamma)$ is a contraction. We consider the wave equation
\begin{equation}\label{eq:6.5.2}
\ddot{u}=\Delta u.
\end{equation}   	
For our purposes we call $u$ a \emph{weak solution} if $u\in {\mathcal C}(\R_+,L^2(\Omega))$ and 
\[  \frac{d^2}{dt^2}\langle u(t),v\rangle_{L^2(\Omega)} =\langle u(t),\Delta v\rangle_{L^2(\Omega)} =\langle u(t),\Delta v\rangle_{L^2(\Omega)}\]
	 for all $v\in {\mathcal C}^\infty_c(\Omega)$. 
	 \begin{cor}\label{cor:6.5.3}
	 	Let $(u_{01},u_{02})\in \dom(B)$. Then there exists a unique $u\in {\mathcal C}^1(\R_+,L^2(\Omega))\cap {\mathcal C}(\R_+,H^1(\Omega))$ such that 
	 	\begin{equation}
	 	\label{eq:6.5.3a}
	 	\ddot{u}=\Delta u\mbox{  weakly}
	 	\end{equation}
	 	 \begin{equation}
	 	 \label{eq:6.5.3b}
	 	 u(0)=u_{01},\, \dot{u}(0)=\di u_{02}\mbox{ and }
	 	 \end{equation}
	 	 \begin{equation}
	 	 \label{eq:6.5.3c}
	 	  \Phi \left(  \frac{1}{2} u(t)_{\Gamma}-R\left(\nu\cdot u_{02} + \partial_{\nu}\int_0^t u(s)ds\right)\right) =\frac{1}{2} u(t)_{\Gamma} +R\left(\nu\cdot u_{02} + 
	 	  \partial_{\nu}\int_0^t u(s)ds\right).
	 	 \end{equation}
	 \end{cor}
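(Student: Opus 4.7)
By Corollary~\ref{cor:6.5.2}, the operator $B = B_\Phi$ is $m$-dissipative, hence by Lumer--Phillips it generates a contractive $C_0$-semigroup $(S(t))_{t\geq 0}$ on $V = L^2(\Omega) \times L^2(\Omega)^d$. For $(u_{01}, u_{02}) \in \dom(B)$, set $(u(t), w(t)) := S(t)(u_{01}, u_{02})$; this is a classical orbit, so $(u, w) \in \mathcal{C}^1(\R_+, V) \cap \mathcal{C}(\R_+, \dom(B))$. Since $\dom(B) \subset H^1(\Omega) \times H_{\di}(\Omega)$ and the graph norm of $B$ dominates the $H^1$-norm of the first component (through the $\nabla u$ piece of $A(u,w)$), we obtain $u \in \mathcal{C}^1(\R_+, L^2(\Omega)) \cap \mathcal{C}(\R_+, H^1(\Omega))$, which is the regularity demanded by the statement.

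My plan for existence is to translate the first-order system $\dot u = \di w$, $\dot w = \nabla u$ (encoded by $\frac{d}{dt}(u,w) = B(u,w)$) into the second-order formulation. Integrating $\dot w(t) = \nabla u(t)$ in $L^2(\Omega)^d$ and commuting $\nabla$ with the Bochner integral (legitimate since $\nabla\colon H^1(\Omega) \to L^2(\Omega)^d$ is continuous) yields $w(t) = u_{02} + \nabla \int_0^t u(s)\,ds$; substituting this into $\nu \cdot w(t)$ rewrites the membership $(u(t), w(t)) \in \dom(B_\Phi)$ as precisely \eqref{eq:6.5.3c}. For the weak wave equation, testing $\dot u(t) = \di w(t)$ against $v \in \mathcal{C}_c^\infty(\Omega)$ and differentiating once more in $t$ gives $\tfrac{d}{dt}\langle \dot u(t), v\rangle = -\langle \dot w(t), \nabla v\rangle = -\langle \nabla u(t), \nabla v\rangle = \langle u(t), \Delta v\rangle$, which is \eqref{eq:6.5.3a}. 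The initial conditions \eqref{eq:6.5.3b} are immediate from $u(0) = u_{01}$ and $\dot u(0) = \di w(0) = \di u_{02}$.

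For uniqueness, I would go in the reverse direction: suppose $u$ satisfies \eqref{eq:6.5.3a}--\eqref{eq:6.5.3c} with the stated regularity, and define $w(t) := u_{02} + \nabla \int_0^t u(s)\,ds \in L^2(\Omega)^d$. Then $w$ is continuous in $t$ with $\dot w(t) = \nabla u(t)$. Integrating the weak equation \eqref{eq:6.5.3a} in $t$ against any $v \in \mathcal{C}_c^\infty(\Omega)$ and using $\dot u(0) = \di u_{02}$ produces the distributional identity $\Delta \int_0^t u(s)\,ds = \dot u(t) - \di u_{02}$, whence $\di w(t) = \dot u(t) \in L^2(\Omega)$; in particular $w(t) \in H_{\di}(\Omega)$. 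Condition \eqref{eq:6.5.3c} then becomes $(u(t), w(t)) \in \dom(B_\Phi)$, so $(u, w)$ is a classical $\mathcal{C}^1$-solution of the abstract Cauchy problem $\dot z = Bz$ with $z(0) = (u_{01}, u_{02})$; uniqueness of semigroup solutions forces $(u(t), w(t)) = S(t)(u_{01}, u_{02})$, and hence $u$ is uniquely determined.

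The most delicate step I expect is this identification $\di w(t) = \dot u(t)$ in the uniqueness direction: it rests on interchanging $\frac{d^2}{dt^2}$ with the $L^2$-pairing against test functions, integrating once in $t$ by Fubini, and then invoking the initial condition $\dot u(0) = \di u_{02}$ to pin down the correct constant of integration. Only once this identity is in hand does the pair $(u, w)$ actually land inside $\dom(B)$, so that the semigroup uniqueness theorem becomes applicable; without it the weak formulation \eqref{eq:6.5.3a}--\eqref{eq:6.5.3c} does not by itself produce the ambient abstract Cauchy problem.
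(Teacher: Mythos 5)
Your proposal is correct and follows essentially the same route as the paper: existence by taking the semigroup orbit $(u(t),w(t))=T(t)(u_{01},u_{02})$ and translating the first-order system $\dot u=\di w$, $\dot w=\nabla u$ into the weak wave equation plus the boundary condition, and uniqueness by reconstructing $w(t)=u_{02}+\int_0^t\nabla u(s)\,ds$ and invoking uniqueness of classical solutions of $\dot z=Bz$. The only difference is that you spell out the step $\di w(t)=\dot u(t)\in L^2(\Omega)$ (hence $w(t)\in H_{\di}(\Omega)$) which the paper leaves implicit; that is a welcome addition, not a deviation.
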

 Note that by \eqref{eq:6.5.3a}, 
 \[\Delta \int_0^t u(s) ds =\dot{u}(t)-\dot{u}(0)= \dot{u}(t)-\di u_{02}\in L^2(\Omega).\]
 Therefore $\partial_{\nu} \int_0^t u(s)ds\in H^{-1/2}(\Gamma)$. 
 \begin{proof}
 We first prove the existence of a solution. Let 
 \[w(t):=(u_1(t),u_2(t)):=T(t) (u_{01},u_{02}).\]
  Then $w\in {\mathcal C}^1(\R_+, L^2(\Omega)\times L^2(\Omega)^d)\cap {\mathcal C}(\R_+,\dom(B))$ and $\dot{w}(t)=Bw(t)$ for $t\geq 0$. 	Thus \[u_1\in {\mathcal C}^1(\R_+, L^2(\Omega))\cap {\mathcal C}(\R_+,H^1(\Omega)),\]
  \[ u_2\in {\mathcal C}^1(\R_+, L^2(\Omega)^d)\cap {\mathcal C}(\R_+,H_{\di}(\Omega))\] and 
  \[  \dot{u}_1(t)=\di u_2(t),\, \dot{u}_2(t)=\nabla u_1(t).\]
  Thus, for $v\in {\mathcal C}_c^\infty(\Omega)$, 
  \begin{eqnarray*}
  \frac{d^2}{dt^2} \langle u_1(t),v\rangle_{L^2(\Omega)} & = & \frac{d}{dt} \langle \dot{u}_1(t),v\rangle_{L^2(\Omega)}
    = 	\frac{d}{dt} \langle \di {u}_2(t),v\rangle_{L^2(\Omega)}\\
    &= & -\langle \nabla  u_1(t),\nabla v\rangle_{L^2(\Omega)^d}
      =  \langle u_1(t),\Delta v\rangle_{L^2(\Omega)}\\
       &= & -\frac{d}{dt}\langle u_2(t), \nabla v\rangle_{L^2(\Omega)}. 
  \end{eqnarray*}
Thus $u_1$ is a weak solution of \eqref{eq:6.5.3a}. Since $T(0)(u_{01},u_{02})=(u_{01},u_{02})$, \eqref{eq:6.5.3b} holds. Since $(u_1(t),u_2(t))\in\dom(B)$ we have 
\[  \Phi\left(     \frac{1}{2} u_1(t)_{\Gamma} -R(\nu\cdot u_2(t))   \right)=\frac{1}{2} u_1(t)_{\Gamma} +R(\nu\cdot u_2(t)).  \]
Observe that 
\[  \nu\cdot u_2(t)=\nu\cdot \left( u_{02} +\int_0^t \nabla u_1(s)ds \right) =\nu\cdot u_{02} +\partial_{\nu} \int_0^t u_1(s)ds .\]
This shows that $u:=u_1$ satisfies \eqref{eq:6.5.3c}. 

To prove the uniqueness, let $u$ be  a solution of \eqref{eq:6.5.3a},    \eqref{eq:6.5.3b} and \eqref{eq:6.5.3c}. Define $u_1=u$ and $u_2(t)=\int_0^t \nabla u(s)ds + u_{02}$. Then $w(t)=(u_1(t),u_2(t))\in\dom(B)$, $w\in {\mathcal C}^1(\R_+, L^2(\Omega)\times L^2(\Omega)^d)$ and $w$ satisfies \eqref{eq:6.5.3a},    \eqref{eq:6.5.3b} and \eqref{eq:6.5.3c}. Thus $w(t)=T(t) (u_{01},u_{02})$.  
 \end{proof}	
\begin{rem}	\label{rem:6.5.4}
Let $u_{01}\in H_0^1(\Omega)$, $u_{02}=0$, $(u^1(t),u^2(t)):=T(t)(u_{01},0)$, $w(t):=\int_0^t u^1(s) ds$. Then $w\in {\mathcal C}^2(\R_+,L^2(\Omega))\cap {\mathcal C}^1(\R_+,H^1(\Omega))$, $w(t)\in \dom(B)$, $\ddot{w}(t)=\Delta w$, $w(0)=0$, $\dot{w}(0)=u_{01}$. Since the semigroup is contractive, 
 \[  E(t):=\|\dot{w}(t)\|^2_{L^2(\Omega)} +\|\nabla w(t)\|^2_{L^2(\Omega)} = \|u_1(t)\|^2_{L^2(\Omega)} +\|u_2(t)\|^2_{L^2(\Omega)}   \]
 is decreasing. If $\Phi$ is unitary, then $E(t)$ is constant, the sum of cinetic and potential energy.     
\end{rem}
\section{Relation to the literature}\label{sec:7}
In the monograph \cite{GG} by V. I. and M. L. Gorbachuk and \cite{Sch12} by Schm\"udgen, boundary triples are used to parametrize seladjoint extensions of a densely defined symmetric operator. Wegner \cite{Weg} uses them to investigate $m$-dissipative extensions of a densely defined skew-symmetric operator--as we do in the present article--however using boundary quadruples. We now explain the relation of our results to those of Wegner and those presented in the two monographies \cite{GG} and \cite{Sch12}. 

We start with a general algebraic property of triples. 
\begin{lem}\label{lem:7.1}
	Let $W$ be a vector space, $H$ a pre-Hilbert space and 
	\[  \Gm  ,\Gp  , G_1, G_2:W\to H\mbox{ linear mappings.}\]  
	\begin{itemize}
		\item[a)] 
 	Then 
	\begin{equation}\label{eq:7.1}
		\Gm  =\frac{\sqrt{2}}{2} (G_2-G_1),\,\, \Gp  =\frac{\sqrt{2}}{2} (G_2+G_1),
	\end{equation}
if and only if 
	\begin{equation}\label{eq:7.2}
	G_1=\frac{\sqrt{2}}{2} (\Gp  -\Gm  ),\,\, G_2=\frac{\sqrt{2}}{2} (\Gp  +\Gm  ),
\end{equation}
\item[b)] If \eqref{eq:7.1} ($\Longleftrightarrow$ \eqref{eq:7.2}) holds, then 
\begin{equation}\label{eq:7.3}
\langle G_1u,G_2v\rangle_H + 	\langle G_2u,G_1v\rangle_H
= \langle \Gp  u,\Gp  v\rangle_H - \langle \Gm  u,\Gm  v\rangle_H
\end{equation}
for all $u,v\in W$. 
\item[c)] Let $\Hm =\Gm   W$, $\Hp  =\Gp   W$, $H_1=G_1 W$, $H_2=G_2 W$.  Then 
\[(\Gm  , \Gp  ):W\to \Hm \times \Hp   \mbox{ is surjective }\]
if and only if 
\[(G_1,G_2):W\to H_1\times H_2 \mbox{ is surjective.}\] 
 \end{itemize}
\end{lem}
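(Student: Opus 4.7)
For part (a), I would directly solve the $2\times 2$ linear system: adding the two identities in \eqref{eq:7.1} yields $\Gm+\Gp=\sqrt{2}\,G_2$, while subtracting them yields $\Gp-\Gm=\sqrt{2}\,G_1$, which is precisely \eqref{eq:7.2}. The reverse implication is the same manipulation applied to \eqref{eq:7.2}. The underlying structure is the linear bijection of $H\times H$ given by $(a,b)\mapsto(\tfrac{\sqrt{2}}{2}(b-a),\tfrac{\sqrt{2}}{2}(a+b))$, essentially a rotation by $\pi/4$.

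For part (b), the identity \eqref{eq:7.3} is a polarization-type identity. Substituting \eqref{eq:7.1} into the right-hand side gives
\[
\langle\Gp u,\Gp v\rangle_H - \langle\Gm u,\Gm v\rangle_H
= \tfrac{1}{2}\langle G_2u+G_1u,\,G_2v+G_1v\rangle_H - \tfrac{1}{2}\langle G_2u-G_1u,\,G_2v-G_1v\rangle_H.
\]
Expanding both inner products, the diagonal terms $\langle G_1u,G_1v\rangle_H$ and $\langle G_2u,G_2v\rangle_H$ cancel upon subtraction, and the cross terms combine to produce $\langle G_1u,G_2v\rangle_H + \langle G_2u,G_1v\rangle_H$. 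This step is a routine calculation.

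For part (c), the key observation is that the linear bijection $\Psi:H\times H\to H\times H$ defined by $\Psi(a,b):=(\tfrac{\sqrt{2}}{2}(b-a),\tfrac{\sqrt{2}}{2}(a+b))$ intertwines the two pair-maps via $(\Gm w,\Gp w) = \Psi(G_1w,G_2w)$ for every $w\in W$. Joint surjectivity of either pair-map onto its natural product target is equivalent to an interpolation property: $(\Gm,\Gp):W\to\Hm\times\Hp$ is surjective iff $\ker\Gm+\ker\Gp = W$, and likewise $(G_1,G_2):W\to H_1\times H_2$ is surjective iff $\ker G_1+\ker G_2 = W$, by the same argument as in Remark~\ref{rem:3.3}. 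The main step — and where I expect the main obstacle to lie — is to show that these two sum-of-kernels identities coincide under \eqref{eq:7.1}. I would approach this by taking any decomposition $w=u+v$ with $\Gm u=0$, $\Gp v=0$, and producing from it, via the algebraic relations \eqref{eq:7.1}/\eqref{eq:7.2}, an analogous decomposition adapted to $G_1$ and $G_2$, and symmetrically for the reverse direction.
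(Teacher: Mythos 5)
Your parts a) and b) are complete and correct; the paper itself offers no argument for this lemma (it simply declares the proof straightforward), and your ``rotation by $\pi/4$'' computation for a) and the polarization-type cancellation for b) are exactly what is needed.

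The genuine gap is in part c), and you have in fact located it precisely: the step you defer to the end --- showing that $\ker \Gm+\ker\Gp=W$ and $\ker G_1+\ker G_2=W$ are equivalent under \eqref{eq:7.1} --- cannot be carried out, because the equivalence is false in the stated generality. Take $W=H=\R$ and $G_1=G_2=\Id$. Then $\Gm=0$ and $\Gp=\sqrt2\,\Id$, so $\Hm=\{0\}$, $\Hp=\R$ and $(\Gm,\Gp)$ is surjective onto $\Hm\times\Hp$ (equivalently $\ker\Gm+\ker\Gp=\R=W$); but $H_1=H_2=\R$ and $(G_1,G_2)(w)=(w,w)$ has range the diagonal, so it is not surjective onto $H_1\times H_2$ (equivalently $\ker G_1+\ker G_2=\{0\}\neq W$). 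What your bijection $\Psi$ of $H\times H$ actually yields is the identity $\ran (\Gm,\Gp)=\Psi\bigl(\ran (G_1,G_2)\bigr)$ inside $H\times H$; this proves c) immediately in the case where all four range spaces equal $H$, i.e.\ where both pair-maps are required to be surjective onto the full product $H\times H$ --- and that is the only situation in which the lemma is invoked in Section~\ref{sec:7}, where $K=\Gm\dom(A)=\Gp\dom(A)$. So the correct move is not to hunt for a transfer of kernel decompositions (none exists), but to add the hypothesis $\Hm=\Hp=H_1=H_2=H$ (or to restate c) as the range identity above), after which $\Psi$ being a linear bijection of $H\times H$ finishes the proof in one line.
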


Recall from Section~\ref{sec:3} that 
\begin{equation}\label{eq:7.4}
	(\Gm  ,\Gp  ):W\to \Hm \times \Hp   \mbox{ is surjective if and only if }
\end{equation}  
\begin{equation}\label{eq:7.5}
 \ker \Gm   +\ker \Gp  = W.
\end{equation}  
The proof of Lemma~\ref{lem:7.1} is straightforward.  

Now let $A_0$ be a skew-symmetric operator on $V$ with dense domain. Let $V$ be a complex Hilbert space. Following Wegner \cite[Defintion 4.1]{Weg}, a \emph{boundary triple} $(K,\Gamma_1,\Gamma_2)$ for $A_0$ consists of a Hilbert space $K$ and linear mappings 
$\Gamma_1,\Gamma_2:\dom(A)\to K$ such that 
\begin{equation}\label{eq:7.6}
-\langle Au,v\rangle_V-\langle u,Av\rangle_V=\langle \Gamma_1 u,\Gamma_2 v\rangle_H + \langle \Gamma_2 u,\Gamma_2 v\rangle_H	
\end{equation}	
for all $u,v\in\dom(A)$ and 
\begin{equation}\label{eq:7.7}
(\Gamma_1,\Gamma_2):\dom(A)\to K \mbox{ is sujective.} 	
 \end{equation}	
Let $H=K$, $G_1=\Gamma_2$, $G_2=\Gamma_1$ and define $\Gp  ,\Gm  $ by \eqref{eq:7.1}. Then $(H,H,\Gm  ,\Gp  )$ is a boundary quadruple (in the sense of our definition) with $\Gm   W=\Gp  W=H$. 

By Theorem~\ref{th:3.9} an operator $B\supset A_0$ is $m$-dissipative of and only if there exists a contraction $\Phi\in{\mathcal L}(H)$ such that 
\[  \dom(B)=\{  w\in\dom(A):\Phi \Gm   w=\Gp   w\}\mbox{ and }Bw=Aw\]
for all $w\in\dom(B)$.  It is straightforward that $\Phi \Gm   w=\Gp   w$ if and only if 
\[  (\Id +\Phi)\Gamma_1 w +(\Id -\Phi)\Gamma_2 w=0.\]
So we refind \cite[ Theorem 4.2]{Weg}.   

In contrast to boundary quadruples, boundary triples in the sense of Wegner do not always exist. In fact the following holds. 
\begin{prop}\label{prop:7.2}
Given a densely defined skew-symmetric operator $A_0$ on $V$, the following assertions are equivalent:
\begin{itemize}
	\item[(i)] there exists a boundary triple for $A_0$;
	\item[(ii)] $\ker (\Id -A)$ and $\ker(\Id +A)$ are isomorphic;
	\item[(iii)] $A_0$ has an extension which generates a unitary $C_0$-group;
	\item[(iv)] if $(\Hm,\Hp,\Gm  ,\Gp  )$ is a boundary quadruple, then $\Gp  W$ and $\Gm   W$ are isomorphic. 
\end{itemize}
\end{prop}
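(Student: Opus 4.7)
The strategy is to use Corollary~\ref{cor:4.3} to handle $(ii) \Leftrightarrow (iii) \Leftrightarrow (iv)$ directly, and Lemma~\ref{lem:7.1} to bridge between Wegner's boundary triples and our boundary quadruples for $(i) \Leftrightarrow (iii)$.

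First, $(ii) \Leftrightarrow (iii)$ is exactly the equivalence of parts $(i)$ and $(iii)$ of Corollary~\ref{cor:4.3}. For $(iii) \Leftrightarrow (iv)$, fix any boundary quadruple $(\Hm, \Hp, \Gm, \Gp)$: Corollary~\ref{cor:4.3} tells us $(iii)$ is equivalent to $\Hm$ and $\Hp$ being isomorphic. Since $(iii)$ is a property of $A_0$ alone, it forces this isomorphism to hold for \emph{every} boundary quadruple once it holds for one, yielding $(iv)$. Conversely, applying $(iv)$ to any boundary quadruple (which exists by Section~\ref{sec:3}) and invoking Corollary~\ref{cor:4.3} gives $(iii)$.

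For $(i) \Rightarrow (iii)$, I would start from a Wegner boundary triple $(K, \Gamma_1, \Gamma_2)$ and apply Lemma~\ref{lem:7.1} with $H = K$, $G_1 = \Gamma_2$, $G_2 = \Gamma_1$, defining $\Gm := \tfrac{\sqrt{2}}{2}(\Gamma_1 - \Gamma_2)$ and $\Gp := \tfrac{\sqrt{2}}{2}(\Gamma_1 + \Gamma_2)$ via~\eqref{eq:7.1}. Part~(b) of the lemma combined with~\eqref{eq:7.6} yields the boundary form identity~\eqref{eq:3.2}, while part~(c) with~\eqref{eq:7.7} together with~\eqref{eq:7.4}--\eqref{eq:7.5} gives~\eqref{eq:3.3}; moreover, surjectivity of $(\Gamma_1,\Gamma_2)$ onto $K \times K$ easily implies $\Gm \dom(A) = \Gp \dom(A) = K$. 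Hence $(K, K, \Gm, \Gp)$ is a boundary quadruple with $\Hm \cong \Hp$, and $(iii)$ follows from Corollary~\ref{cor:4.3}.

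Conversely, for $(iv) \Rightarrow (i)$, pick any boundary quadruple $(\Hm, \Hp, \Gm, \Gp)$ and use $(iv)$ to choose a unitary $U : \Hm \to \Hp$. Then $\tilde{\Gm} := U\Gm$ gives a boundary quadruple $(\Hp, \Hp, \tilde{\Gm}, \Gp)$ since $U$ preserves the inner product. Apply Lemma~\ref{lem:7.1} in the opposite direction with $K := \Hp$ to define $\Gamma_1 := \tfrac{\sqrt{2}}{2}(\Gp + \tilde{\Gm})$ and $\Gamma_2 := \tfrac{\sqrt{2}}{2}(\Gp - \tilde{\Gm})$ via~\eqref{eq:7.2}; parts~(b) and~(c) deliver~\eqref{eq:7.6} and~\eqref{eq:7.7} respectively. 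The only delicate point in the whole argument is the step $(iii) \Rightarrow (iv)$, which hinges on the observation that the isomorphism of $\Hm$ and $\Hp$ is intrinsic to $A_0$ and not an artifact of the particular quadruple chosen; all remaining work is routine bookkeeping with Lemma~\ref{lem:7.1}.
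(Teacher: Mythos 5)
Your proposal is correct and follows essentially the same route as the paper: both directions between triples and quadruples are handled by the translation of Lemma~\ref{lem:7.1} (exactly as in the paper's $(iv)\Rightarrow(i)$ and the discussion after \eqref{eq:7.7}), and the equivalences $(ii)\Leftrightarrow(iii)\Leftrightarrow(iv)$ are reduced to Corollary~\ref{cor:4.3}. Your explicit remark that the isomorphism of $\Hm$ and $\Hp$ is intrinsic to $A_0$ (so that $(iii)$ for one quadruple gives $(iv)$ for all) is a point the paper leaves implicit, but it is the same argument.
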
 
\begin{proof}
$(iv)\Rightarrow (i)$ By assumption there exists a unitary operator $U:G_1 W\to \Gm  W$. Let $K=\Gm  W$, 	$\Gamma_{\mbox{\tiny{-}}}=\Gm  $, $\Gamma_+=U\Gp  $. Then 
$(\Gamma_{\mbox{\tiny{-}}},\Gamma_+):W\to K\times K$ is surjective and 
\begin{eqnarray*}
	\langle Au,v\rangle_W +\langle u,Av\rangle_W & = & \langle \Gp   u,\Gp   v\rangle_H-\langle \Gm   u,\Gm   v\rangle_H\\
	 & = & \langle \Gamma_+ u,\Gamma_+   v\rangle_K-\langle \Gamma   u,\Gamma  v\rangle_H.
\end{eqnarray*}
Now using Lemma~\ref{lem:7.1}
 we obtain $\Gamma_1,\Gamma_2:W\to K\times K$ such that \eqref{eq:7.6} and \eqref{eq:7.7} holds.\\
 $(i)\Rightarrow (iv)$ This follows from the lines following \eqref{eq:7.7}. \\
 $(ii)\Longleftrightarrow (iii)\Longleftrightarrow(iv)$. This is Corollary~\ref{cor:4.3}.   
\end{proof}

 \noindent \textbf{Acknowledgments:}  This research is partly supported by the B\'ezout Labex, funded by ANR, reference ANR-10-LABX-58.   The authors are grateful to N. Skrepek for an interesting discussion on the wave equation.  
 
 \bibliographystyle{abbrv}
 \bibliography{horsdoeuvrefinal-total}
\end{document}